\documentclass[a4paper]{amsart}

\usepackage{amsmath,amssymb,amsthm}
\usepackage{indentfirst,color}
\usepackage[colorlinks,citecolor=red,linkcolor=blue,urlcolor=cyan]{hyperref}

\theoremstyle{plain}
\newtheorem{theorem}{Theorem}[section]
\newtheorem{proposition}[theorem]{Proposition}
\newtheorem{lemma}[theorem]{Lemma}
\newtheorem{corollary}[theorem]{Corollary}
\theoremstyle{definition}
\newtheorem{definition}[theorem]{Definition}
\theoremstyle{remark}
\newtheorem{remark}[theorem]{Remark}
\numberwithin{equation}{section}

\begin{document}

\title[Griffiths Positivity, Pluriharmonicity and Flatness]{Characterizations of Griffiths Positivity, Pluriharmonicity and Flatness}

\author{Zhuo Liu}
\address{Beijing Institute of Mathematical Sciences and Applications, Beijing 101408, China. Department of Mathematics and Yau Mathematical Sciences Center, Tsinghua University, Beijing 100084, China.}
\email{liuzhuo@amss.ac.cn; liuzhuo@bimsa.cn}

\author{Wang Xu}
\address{School of Mathematical Sciences, Peking University, Beijing 100871, China. Current Address: School of Mathematics, Sun Yat-sen University, Guangzhou 510275, China.}
\email{xuwang@amss.ac.cn}

\date{}

\begin{abstract}
Deng-Ning-Wang-Zhou showed that a Hermitian holomorphic vector bundle is Griffiths semi-positive if it satisfies the optimal $L^2$-extension condition. As a generalization, we present a quantitative characterization of Griffiths positivity in terms of certain $L^2$-extension conditions.
We also show that a $\mathbb{R}$-valued measurable function is pluriharmonic if and only if it satisfies the equality part of the optimal $L^p$-extension condition. This answers a conjecture of Inayama affirmatively.
Moreover, the flatness of a possibly singular Hermitian metric is also equivalent to the equality part of the optimal $L^p$-extension condition.
\end{abstract}

\keywords{Griffiths positivity, Pluriharmonicity, Flatness, Bergman kernel, Optimal $L^p$-extension, Holomorphic vector bundle, Singular Hermitian metric}

\subjclass[2020]{31C10, 32L15, 32A36, 32D15, 32L05, 32U40}

\maketitle

\section{Introduction}

Positivities, such as plurisubharmonicity and Griffiths/Nakano positivity, play fundamental roles in the study of several complex variables and complex geometry. These positivity concepts have led to numerous important results.
Psh (short for plurisubharmonic) functions are not necessarily smooth, this offers significant advantages in certain problems. As for vector bundles, there is a constant interest in exploring singular metrics with certain kind of positivity.

Let $E$ be a holomorphic vector bundle over a complex manifold $X$. A \textit{singular Hermitian metric} $h$ on $E$ is a measurable map from the base manifold $X$ to the space of non-negative Hermitian forms on the fibers, satisfying $0<\det h<+\infty$ almost everywhere.
It is well-known that, when $h$ is smooth, $h$ is Griffiths semi-positive if and only if $\log|u|_{h^*}$ is psh for any local holomorphic section $u$ of the dual bundle. This characterization naturally leads to a definition of Griffiths positivity for singular Hermitian metrics (see \cite{BP08,Raufi,PT18}), which has proven to be very useful.
In particular, when $E$ is a line bundle, the singular metric $h$ is Griffiths semi-positive if and only if the local weight $\varphi:=-\log h$ is psh.

Recall that, for psh functions and Nakano semi-positive Hermitian holomorphic vector bundles on pseudoconvex domains or Stein manifolds, there are H\"ormander's $L^2$ estimates for the $\overline{\partial}$-equations \cite{Hor65} and Ohsawa-Takegoshi's $L^2$ extension theorem \cite{OT87}.
Since the publication of \cite{OT87}, there has been considerable interests in refining the estimate in Ohsawa-Takegoshi's $L^2$ extension theorem. After the breakthrough of Guan-Zhou-Zhu \cite{GZZ11}, in 2012, B{\l}ocki \cite{Blocki13} and Guan-Zhou \cite{GuanZhou12} successfully obtained the optimal $L^2$ extension theorem.

In \cite{DWZZ18,DNW21,DNWZ22}, Deng, Ning, Wang, Zhang, Zhou established the converse $L^2$ theory by giving alternative characterizations of plurisubharmonicity and Griffiths/Nakano positivity in terms of various $L^2$-conditions for $\overline{\partial}$.
They proved that a smooth Hermitian metric is Nakano semi-positive if and only if it satisfies the ``\textit{optimal $L^2$-estimate condition}''. This characterization leads to a definition of Nakano positivity for singular Hermitian metrics (see \cite{Ina-AG}) and provides a positive answer to a question of Lempert (see \cite{LiuYangZhou21}).
Moreover, if $h$ is a singular Hermitian metric on $E$ such that $|u|_{h^*}$ is upper semi-continuous for any local holomorphic section $u$ of $E^*$, then $(E,h)$ is Griffiths semi-positive if it satisfies the ``\textit{optimal $L^2$-extension condition}'' (see Definition \ref{Def:OptLpExt} and Theorem \ref{Thm:DNWZ}); the converse is also true if $\dim X=1$ or $\operatorname{rank}E=1$.
For more results on characterizations of positivity, we refer the readers to \cite[etc.]{DWZZ18, HosonoInayama, DNW21, DNWZ22, DengZhang21, KharePingali, Watanabe}.

In B{\l}ocki's and Guan-Zhou's optimal $L^2$ extension theorem, it is worth noting that they only required \textit{semi}-positive curvature. The term ``optimal'' means that the uniform estimate provided in the theorem cannot be improved within the considered setting.
Provided \textit{strictly} positive curvature, the setting becomes narrower, and it is not surprised that ``sharper'' estimates can be obtained (see Hosono \cite{Hosono19}, Kikuchi \cite{Kikuchi}, Xu-Zhou \cite{XuZhou22}).
Compared this with the result in \cite{DNWZ22}, it suggests that $L^2$ extensions with sharper estimate would imply strictly Griffiths positivity. 

In \cite{Inayama}, Inayama introduced the notion of ``\textit{$L^2$-extension index}'' for smooth functions (resp. Hermitian holomorphic vector bundles) over planar domains and gave quantitative estimates of the complex Hessian (resp. the Chern curvatures) by using these indexes.
Recall that, a \textit{holomorphic cylinder} in $\mathbb{C}^n$ is a domain of the form $P_{A,r,s}:=A(\mathbb{D}_r\times\mathbb{B}_s^{n-1})$, where $A\in \textbf{U}(n)$ is unitary and $r,s>0$ (see \cite{DNW21,DNWZ22}). For convenience, we define the ``diameter'' of $P_{A,r,s}$ to be
\begin{equation}
\mathfrak{d}(P_{A,r,s}):=\sqrt{\tfrac{1}{2}r^2+\tfrac{n-1}{n}s^2}.
\end{equation}
The first result of this paper is a quantitative characterization of Griffiths positivity in terms of certain $L^2$-extension conditions, which generalizes Inayama's result to higher dimensions. Notice that, we don't require $c\geqslant0$ and our proof is different.

\begin{theorem}\label{MainThm1}
Let $(E,h)$ be a holomorphic vector bundle over a domain $\Omega\subset\mathbb{C}^n$, equipped with a smooth Hermitian metric. Then the following are equivalent:
\begin{itemize}
	\item[(i)] $i\Theta_h \geqslant_\textup{Grif} c\omega\otimes\textup{Id}_E$ at $x\in\Omega$, where $c\in\mathbb{R}$;
	\item[(ii)] for any $\varepsilon>0$, there is a constant $\delta>0$ such that for any $\xi\in E_x$ and any holomorphic cylinder $x+P\subset\Omega$ with $\mathfrak{d}(P)<\delta$, there exists a holomorphic section $f\in\Gamma(x+P,E)$ satisfying $f(x)=\xi$ and
	\begin{equation}
		\frac{1}{\textup{Vol}(P)}\int_{x+P}|f|_h^2d\lambda \leqslant \big(1-(c-\varepsilon)\mathfrak{d}(P)^2\big) |\xi|_h^2.
	\end{equation}
\end{itemize}
\end{theorem}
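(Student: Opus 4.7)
Plan: I would prove both directions by Taylor-expanding $h$ in a \emph{normal} holomorphic frame at $x$, one in which
\[
h_{\alpha\bar\beta}(z) = \delta_{\alpha\beta} - R_{j\bar k\alpha\bar\beta}\,z_j\bar z_k + O(|z|^3),
\]
where the $R_{j\bar k\alpha\bar\beta}$ are the components of $i\Theta_h(x)$. Applying the unitary rotation $A^{-1}$ to the base replaces $x+P_{A,r,s}$ by the standard cylinder $x+\mathbb{D}_r\times\mathbb{B}_s^{n-1}$; the tested direction $e_1$ then ranges over all unit vectors as $A$ varies. On this cylinder the only moments I need are $\int z_j\bar z_k\, d\lambda = a_j\delta_{jk}\,\textup{Vol}(P)$, with $a_1=r^2/2$, $a_j=s^2/n$ for $j\geqslant 2$, so $\sum_j a_j = \mathfrak{d}(P)^2$.

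$(i)\Rightarrow(ii)$: I take $f(z)\equiv\xi$ in the normal frame, which is holomorphic near $x$. Integrating the Taylor expansion of $h$ against $\xi^\alpha\bar\xi^\beta$ gives
\[
\tfrac{1}{\textup{Vol}(P)}\!\int_{x+P}\!|f|_h^2\,d\lambda
= |\xi|^2 - \sum_j a_j R_{j\bar j\alpha\bar\beta}\xi^\alpha\bar\xi^\beta + O(\mathfrak{d}(P)^3)|\xi|^2.
\]
The Griffiths hypothesis applied with $a=e_j$ yields $R_{j\bar j\alpha\bar\beta}\xi^\alpha\bar\xi^\beta\geqslant c|\xi|^2$; multiplying by $a_j$ and summing, $\sum_j a_j R_{j\bar j\alpha\bar\beta}\xi^\alpha\bar\xi^\beta\geqslant c\,\mathfrak{d}(P)^2|\xi|^2$. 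Then (ii) follows by choosing $\delta$ so small that the $O(\mathfrak{d}(P)^3)$ remainder is below $\varepsilon\,\mathfrak{d}(P)^2|\xi|^2$.

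$(ii)\Rightarrow(i)$: For an $f$ supplied by the hypothesis, I write
\[
|f|_h^2 = |f|^2 - R_{j\bar k\alpha\bar\beta}\,z_j\bar z_k\, f^\alpha\bar f^\beta + O(|z|^3)|f|^2,
\]
where $|f|^2=\sum_\alpha|f^\alpha|^2$. Two facts from the symmetry of the cylinder control this expression. First, the $U(1)\times U(n-1)$-invariance of $d\lambda$ on $\mathbb{D}_r\times\mathbb{B}_s^{n-1}$ about $x$, together with the mean-value property for holomorphic functions, gives $\int f\,d\lambda = \xi\,\textup{Vol}(P)$ and hence $\int|f|^2 = |\xi|^2\textup{Vol}(P) + \|f-\xi\|_{L^2}^2$. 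Second, the same invariance makes every holomorphic monomial $z^I$ with $I\neq 0$ orthogonal to $z_j\bar z_k$ in $L^2(d\lambda)$, so the cross terms $\int z_j\bar z_k(f-\xi)^\alpha\bar\xi^\beta\,d\lambda$ and $\int z_j\bar z_k\,\xi^\alpha(\overline{f-\xi})^\beta\,d\lambda$ vanish identically. The middle term therefore reduces to $\textup{Vol}(P)\sum_j a_jR_{j\bar j\alpha\bar\beta}\xi^\alpha\bar\xi^\beta + E_1$, where
\[
E_1 := \int_{x+P}\!R_{j\bar k\alpha\bar\beta}\,z_j\bar z_k\,(f-\xi)^\alpha\overline{(f-\xi)}^\beta\,d\lambda,\qquad |E_1|\leqslant C\,\mathfrak{d}(P)^2\|f-\xi\|_{L^2}^2.
\]
Plugging this into the hypothesis and discarding the non-negative residue $(1-C\mathfrak{d}(P)^2)\|f-\xi\|_{L^2}^2/\textup{Vol}(P)$ yields
\[
\sum_j a_j R_{j\bar j\alpha\bar\beta}\xi^\alpha\bar\xi^\beta \geqslant (c-\varepsilon)\mathfrak{d}(P)^2|\xi|^2 - O(\mathfrak{d}(P)^3)|\xi|^2.
\]
Dividing by $\mathfrak{d}(P)^2$, letting $s/r\to 0$ to concentrate the weight on the $j=1$ term, then $\mathfrak{d}(P)\to 0$ and finally $\varepsilon\to 0$, gives $R_{1\bar 1\alpha\bar\beta}\xi^\alpha\bar\xi^\beta\geqslant c|\xi|^2$. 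Letting $A$ vary over $U(n)$ produces every unit direction, whence $i\Theta_h(x)\geqslant c\,\omega\otimes\textup{Id}_E$.

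The main obstacle is ensuring that the unknown derivatives of the extension $f$ do not contaminate the $\mathfrak{d}(P)^2$-asymptotic in the reverse direction. The two ingredients that rescue this are both tied to the symmetry of the cylinder: the linear-in-$(f-\xi)$ cross terms vanish exactly by the orthogonality of holomorphic monomials against a rotation-invariant measure, and the only quadratic residue $E_1$ absorbs into $\|f-\xi\|_{L^2}^2$, which has the right sign to be discarded. Apart from these, one only needs the crude global bound $\int|f|^2\leqslant 2|\xi|^2\textup{Vol}(P)$, which comes for free from the hypothesis since $h=I+O(\mathfrak{d}(P)^2)$ on $x+P$.
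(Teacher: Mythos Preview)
Your argument is correct, but the converse direction $(ii)\Rightarrow(i)$ takes a genuinely different route from the paper's.

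For $(i)\Rightarrow(ii)$ both you and the paper use the same constant-coefficient section in a normal frame; the only difference is that the paper packages the computation as the mean-value inequality for the psh function $(\varepsilon-c)|\xi|_h^2\,|z-x|^2 - |f|_h^2$ (using the identity $\frac{1}{\textup{Vol}(P)}\int_P|z|^2\,d\lambda=\mathfrak{d}(P)^2$), which sidesteps the explicit $O(|z|^3)$ remainder but is otherwise the same idea.

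For $(ii)\Rightarrow(i)$ the approaches diverge. The paper dualizes: by Lemma~\ref{Lemma:QuanGrif} it suffices to show $i\partial\bar\partial\log|u|_{h^*}^2\geqslant c\omega$ at $x$ for every local holomorphic section $u$ of $E^*$. Pairing $u$ with the extension $f$ from the hypothesis and using $\log|u|_{h^*}^2 \geqslant \log|u(f)|^2 - \log|f|_h^2$, the paper applies the mean-value inequality to the psh function $\log|u(f)|^2$ and Jensen's inequality to $\log|f|_h^2$, then concludes via Corollary~\ref{Cor:QuanPsh}. Your argument is instead a direct second-order analysis in the normal frame: the torus-invariance of $d\lambda$ on the cylinder kills the linear-in-$(f-\xi)$ cross terms exactly, and the residual quadratic piece $(1-C\mathfrak{d}(P)^2)\|f-\xi\|_{L^2}^2$ has a favorable sign and can be dropped. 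The paper's route is shorter and more conceptual, connecting to the existing psh machinery and the dual-bundle characterization of Griffiths positivity; your route is more elementary and self-contained (no Jensen, no passage to $E^*$), and the explicit control of $\|f-\xi\|^2$ it produces could be useful for finer quantitative statements. One small point worth making explicit in your write-up: the Taylor expansion $h_{\alpha\bar\beta}=\delta_{\alpha\beta}-R_{j\bar k\alpha\bar\beta}z_j\bar z_k+O(|z|^3)$ requires the \emph{refined} normal frame in which the pure holomorphic second derivatives $\partial_j\partial_k h(0)$ also vanish; with only $dh(0)=0$ there would be additional $z_jz_k$ and $\bar z_j\bar z_k$ terms whose cross-pairings with $\overline{f-\xi}$ do not vanish by your orthogonality argument.
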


Condition (ii) with $c=0$, which is equivalent to the Griffiths semi-positivity at the given point, appears to be weaker than the optimal $L^2$-extension condition. However, Prof. Fusheng Deng and Prof. Zhiwei Wang expected that the optimal $L^2$-extension condition would also be equivalent to the Griffiths semi-positivity.

Let $\varphi$ be an upper semi-continuous function on a domain $\Omega\subset\mathbb{C}^n$, for any holomorphic cylinder $x+P\subset\Omega$, the \textit{$L^2$-extension index} of $\varphi$ is defined as
\begin{equation}
	L_\varphi(x,P) := \inf\left\{ \frac{\int_{x+P}|f|^2e^{-\varphi}d\lambda}{\textup{Vol}(P)e^{-\varphi(x)}}: f\in\mathcal{O}(x+P),f(x)=1 \right\}.
\end{equation}
By Montel's theorem, the infimum in the above definition is achievable. According to \cite{Blocki13,GuanZhou15} and \cite{DNW21}, $\varphi$ is psh if and only if $L_\varphi\leqslant1$. Moreover, by Theorem \ref{MainThm1}, if $\varphi$ is smooth and strictly psh near $x$, then $L_\varphi(x,P)<1$ for some holomorphic cylinder $x+P\Subset\Omega$. Having these observations, it is natural to ask whether $L_\varphi(x,P)\equiv1$ implies $i\partial\bar{\partial}\varphi\equiv0$?

Addressing the above question, Inayama \cite{Inayama} proved that a \textbf{smooth} function $\varphi$ on $\Omega\subset\mathbb{C}^n$ is pluriharmonic if and only if $L_\varphi(x,P)\equiv1$. He conjectured that such a characterization would also hold for \textbf{upper semi-continuous} functions (see \cite[Conjecture A.2]{Inayama}).
Inayama communicated his conjecture via email to the second author on May 12, 2023. A few days later, he also provided us with a proof for \textbf{continuous} $\varphi$.

\begin{theorem}[see \cite{InayamaNote}] \label{MainThm:Inayama}
Let $\varphi$ be a continuous function on  $\Omega\subset\mathbb{C}^n$, then $\varphi$ is pluriharmonic if and only if $L_\varphi(x,P)=1$ for all holomorphic cylinder $x+P\subset\Omega$.
\end{theorem}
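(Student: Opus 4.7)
The plan is to convert $L_\varphi\equiv 1$ into the much stronger statement that $e^{h-\varphi}$ is plurisubharmonic for every locally defined pluriharmonic function $h$, and then force $\varphi|_L$ to be harmonic on every complex line $L$ via a one-variable maximum principle argument. The ``if'' direction is quick: if $\varphi = g + \bar g$ locally with $g$ holomorphic, then the bijection $f \mapsto \tilde f := e^{g(x) - g} f$ on $\{f \in \mathcal{O}(x+P): f(x)=1\}$ satisfies $|f|^2 e^{-\varphi} = |\tilde f|^2 e^{-\varphi(x)}$, reducing the weighted extremal problem to the unweighted Bergman problem on $x+P$. Since $x+P$ is a product of a disk and a ball, its unweighted Bergman kernel at the center equals $1/\textup{Vol}(P)$, so the minimum is $\textup{Vol}(P)$ and $L_\varphi(x,P) = 1$.

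For the ``only if'' direction I would plug the test functions $f := e^{-g}$ (with $g\in\mathcal{O}(x+P)$, $g(x)=0$) into the inequality $\int_{x+P}|f|^2 e^{-\varphi}\, d\lambda \geq \textup{Vol}(P)\, e^{-\varphi(x)}$ provided by $L_\varphi(x,P)=1$. Setting $\tilde h := -2\operatorname{Re}(g)$, this reads
\begin{equation*}
\frac{1}{\textup{Vol}(P)}\int_{x+P} e^{\tilde h - \varphi}\, d\lambda \;\geq\; e^{(\tilde h - \varphi)(x)}.
\end{equation*}
Since $x+P$ is simply connected, every pluriharmonic function on $x+P$ equals such a $\tilde h$ up to an additive constant that cancels from both sides, so the inequality holds for \emph{every} pluriharmonic $h$ on $x+P$. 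Varying $x$, we see that for every open $U \subset \Omega$ and every pluriharmonic $h$ on $U$, the continuous function $e^{h-\varphi}$ satisfies this cylinder submean inequality at every $x \in U$ over cylinders $x+P \subset U$. Letting the transverse radius $s \to 0$ in the thin cylinders $P_{A,r,s}$ turns these into disk submean inequalities in every complex direction, so by continuity $e^{h-\varphi}$ is plurisubharmonic on $U$.

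To conclude pluriharmonicity of $\varphi$, fix $x_0 \in \Omega$ and a complex line $L$ through $x_0$. After a unitary change of coordinates take $L = \mathbb{C}\times\{0\}$ and set $\psi(z_1) := \varphi(z_1, 0)$, which is continuous and subharmonic. For any closed disk $\overline{D} \Subset L\cap\Omega$ centered at $x_0$, let $h_P$ be the Poisson integral of $\psi|_{\partial D}$, and extend it pluriharmonically to a thin neighborhood $D\times\mathbb{B}^{n-1}_\varepsilon \subset \Omega$ by $h(z_1, z') := h_P(z_1)$. The previous paragraph gives that $e^{h-\varphi}$ is plurisubharmonic on this neighborhood, so restricting to $L$ yields $e^{h_P - \psi}$ subharmonic on $D$. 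Subharmonicity of $\psi$ forces $h_P - \psi \geq 0$ on $\overline{D}$ with equality on $\partial D$, so $e^{h_P - \psi} \geq 1$ on $\overline{D}$ with equality on $\partial D$; the maximum principle for subharmonic functions then forces $e^{h_P-\psi} \equiv 1$, whence $\psi \equiv h_P$ is harmonic on $D$. Since $\overline D$, $L$, and $x_0$ are arbitrary, $\varphi|_L$ is harmonic on every complex slice, and thus $\varphi$ is pluriharmonic.

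The main conceptual hurdle I anticipate is spotting the exponential test functions $f = e^{-g}$ and the correct \emph{sign} in $h = -2\operatorname{Re}(g)$, which orient the resulting inequality so that $\psi \leq h_P$ translates into $e^{h_P-\psi} \geq 1$ and the maximum principle fires in the right direction. The technical point of passing from the cylinder submean inequality to genuine plurisubharmonicity of $e^{h-\varphi}$ in the continuous case reduces to the standard characterization of psh functions via disk submeans along every complex line.
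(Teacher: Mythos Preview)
Your argument has a gap at the line ``set $\psi(z_1) := \varphi(z_1, 0)$, which is continuous and subharmonic.'' You never use the half $L_\varphi \leqslant 1$ of the hypothesis, so nothing you have written forces $\varphi$ (hence $\psi$) to be plurisubharmonic; what you \emph{have} established---that $e^{h-\varphi}$ is psh for every pluriharmonic $h$---is precisely the standard characterization that $-\varphi$ is psh, not $\varphi$. Without subharmonicity of $\psi$ your maximum-principle step collapses: from $e^{h_P - \psi}$ subharmonic on $D$ and equal to $1$ on $\partial D$ you only get $e^{h_P-\psi} \leqslant 1$ on $D$, and you still need the reverse inequality $\psi \leqslant h_P$ (which is exactly what subharmonicity of $\psi$ would give) to force equality.

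The fix is to invoke the other half of the hypothesis: $L_\varphi \leqslant 1$ is precisely the optimal $L^2$-extension condition, so Theorem~\ref{Thm:DNWZ} yields $\varphi$ psh directly. But once you do this, your Poisson-integral detour becomes redundant: you already proved $e^{-\varphi+2\operatorname{Re}q}$ is psh for every local holomorphic $q$, which by the standard lemma (recalled in the proof of Proposition~\ref{Prop:EquivGrif}: $\log v$ is psh iff $v\,e^{p\operatorname{Re}g}$ is psh for all polynomials $g$) says $-\varphi$ is psh; combined with $\varphi$ psh you are done in one line. This is exactly Inayama's proof as summarized in the paper. So your key idea---plugging in $f = e^{-g}$ to convert $L_\varphi \geqslant 1$ into a submean inequality---matches the paper, but the closing argument is both incomplete as written and, once repaired, superseded by the shorter route you had already set up.
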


Inayama's proof goes as follows: firstly, $L_\varphi\leqslant1$ means that $\varphi$ satisfies the optimal $L^2$-extension condition, then $\varphi$ is psh; for any polynomial $q$ and any holomorphic cylinder $x+P$, $L_\varphi(x,P)\geqslant1$ yields $\int_{x+P}|e^q|^2e^{-\varphi}d\lambda \geqslant \textup{Vol}(P)|e^{q(x)}|^2e^{-\varphi(x)}$. Since $\varphi$ is lower semi-continuous, it follows that $e^{-\varphi+2\operatorname{Re}q}$ is psh for any polynomial $q$. Consequently, $-\varphi$ is also psh. 

In this paper, we shall prove that the continuity assumption in Theorem \ref{MainThm:Inayama} is superfluous, i.e. the continuity of $\varphi$ follows from the condition $L_\varphi\equiv1$. Moreover, the $L^2$-extension condition can be replaced by a similar $L^p$-extension condition.

\begin{theorem}\label{MainThm2}
Let $\varphi:\Omega\to\mathbb{R}$ be a measurable function on a domain $\Omega\subset\mathbb{C}^n$ and $p>0$ be a constant, then the following conditions are equivalent:\par
\begin{itemize}
	\item[(i)] $\varphi$ is pluriharmonic on $\Omega$;
	\item[(ii)] for any holomorphic cylinder $x+P\subset\Omega$,
	\begin{equation} \label{Cond1}
		\inf\left\{ \int_{x+P}|f|^pe^{-\varphi}d\lambda: f\in\mathcal{O}(x+P),f(x)=1 \right\} = \textup{Vol}(P)e^{-\varphi(x)}.
	\end{equation}
	\item[(iii)] there exists a positive continuous function $\gamma\ll1$ on $\Omega$ such that \eqref{Cond1} holds for any holomorphic cylinder $x+P\Subset\Omega$ with $\mathfrak{d}(P)<\gamma(x)$.
\end{itemize}
\end{theorem}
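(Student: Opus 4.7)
For $(\mathrm{i}) \Rightarrow (\mathrm{ii})$: since $\varphi$ is pluriharmonic, locally $\varphi = 2\operatorname{Re}(h)$ for a holomorphic $h$, and the explicit test function $f(z) := e^{(2/p)(h(z) - h(x))}$ satisfies $f(x) = 1$ and $|f|^p e^{-\varphi} \equiv e^{-\varphi(x)}$, giving $\int_{x+P} |f|^p e^{-\varphi}\,d\lambda = \textup{Vol}(P)\,e^{-\varphi(x)}$ and hence $\inf \leq \textup{Vol}(P)\,e^{-\varphi(x)}$. For the reverse inequality, for any $f \in \mathcal{O}(x+P)$ with $f(x) = 1$, the function $|f|^p e^{-\varphi}$ is plurisubharmonic (its logarithm $p\log|f| - \varphi$ is psh plus pluriharmonic), so the sub-mean value property on holomorphic cylinders (obtained by iterated Fubini over $\mathbb{D}_r$ and $\mathbb{B}_s^{n-1}$ after unitarily rotating to $A = I$) yields $\int |f|^p e^{-\varphi} \geq \textup{Vol}(P)\,e^{-\varphi(x)}$. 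The implication $(\mathrm{ii}) \Rightarrow (\mathrm{iii})$ is immediate.

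For $(\mathrm{iii}) \Rightarrow (\mathrm{i})$, my plan is to apply (iii) to the family $f_q(z) := e^{(2/p)(q(z) - q(x))}$ for holomorphic polynomials $q$, producing
\begin{equation*}
\int_{x+P} e^{2\operatorname{Re}(q(z)) - \varphi(z)}\,d\lambda \geq \textup{Vol}(P)\,e^{2\operatorname{Re}(q(x)) - \varphi(x)} \qquad (\star)
\end{equation*}
for every $x \in \Omega$, every polynomial $q$, and every cylinder $x+P$ with $\mathfrak{d}(P) < \gamma(x)$. In parallel, the $\inf \leq \textup{Vol}(P)e^{-\varphi(x)}$ half of (iii) is the optimal $L^p$-extension condition; by the $L^p$-generalization of the Deng-Ning-Wang-Zhang-Zhou characterization of psh via optimal extension (established along the lines leading to Theorem~\ref{MainThm1}), $\varphi$ is almost everywhere equal to some plurisubharmonic $\tilde\varphi$ on $\Omega$. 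A careful comparison with the Blocki-Guan-Zhou extension applied to $\tilde\varphi$, combined with the pointwise nature of the equality in (iii), forces $\varphi(x) = \tilde\varphi(x)$ at every $x$, so $\varphi$ is itself psh, in particular USC and real-valued.

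It remains to promote $\varphi$ to pluriharmonicity using $(\star)$. I plan to integrate $(\star)$ against an arbitrary nonnegative smooth compactly supported test function $\eta$ and swap order to obtain $\int (\hat\eta_P - \textup{Vol}(P)\eta)\,e^{2\operatorname{Re}(q)-\varphi}\,d\lambda \geq 0$, where $\hat\eta_P(z) = \int_{z+P}\eta\,d\lambda$. Since cylinders are centrally symmetric, the first-order term in the Taylor expansion of $\hat\eta_P - \textup{Vol}(P)\eta$ vanishes, leaving a leading Hessian term $\tfrac{1}{2}\textup{Vol}(P)\operatorname{tr}(\nabla^2\eta \cdot M_P)$ with shape moment matrix $M_P = \tfrac{1}{\textup{Vol}(P)}\int_P w\otimes\bar w\,d\lambda$ (plus $O(\mathfrak{d}(P)^3\textup{Vol}(P))$ remainder). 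Dividing by $\mathfrak{d}(P)^2\textup{Vol}(P)$, letting $\mathfrak{d}(P) \to 0$, and integrating by parts converts this into a distributional inequality of the form $\operatorname{tr}(\widehat M\cdot\nabla^2(e^{2\operatorname{Re}(q)-\varphi})) \geq 0$ for each normalized shape matrix $\widehat M$ (freely varied through the unitary $A$ and the ratio $r/s$). Expanding $\nabla^2(e^u) = e^u(\nabla^2 u + \nabla u(\nabla u)^\top)$ for $u = 2\operatorname{Re}(q) - \varphi$, and using the freedom in $q$ (notably linear $q$ chosen to cancel the gradient term) together with the freedom in $\widehat M$ to probe every Hermitian direction, yields $\operatorname{tr}(\widehat M \cdot i\partial\bar\partial\varphi) \leq 0$ as a current. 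Combined with $i\partial\bar\partial\varphi \geq 0$ from plurisubharmonicity, this forces $i\partial\bar\partial\varphi = 0$, so $\varphi$ agrees almost everywhere with a pluriharmonic (in particular smooth) function, which the pointwise equality in (iii) then pins down to agree with $\varphi$ at every point.

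The main obstacle is the distributional/weak extraction of the Hessian-vanishing step: the pointwise choice $q(z) := \partial\varphi(x)\cdot(z-x)$ that cancels the gradient term is transparent in the smooth case, but for merely psh $\varphi$ one must justify the cancellation distributionally, likely by mollifying $\varphi_\epsilon := \varphi\ast\rho_\epsilon$ with a radial mollifier (for which $\rho_\epsilon \ast q = q$ on linear polynomials), pushing $(\star)$ through convolution with control on the Jensen gap, and passing to $\epsilon \to 0$. If this direct approach encounters technical trouble, a cleaner alternative is to first establish the continuity of $\varphi$ from $(\star)$ by sharply comparing $\varphi(y)$ to $\varphi(x)$ for $y \to x$ using test functions in (iii) centered at $y$, thereby bootstrapping LSC atop the already-known USC, and then to invoke Inayama's Theorem~\ref{MainThm:Inayama} for continuous $\varphi$ directly.
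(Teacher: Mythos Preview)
Your $(\mathrm{i})\Rightarrow(\mathrm{ii})$ is essentially the paper's argument. The gap is in $(\mathrm{iii})\Rightarrow(\mathrm{i})$: you invoke the Deng--Ning--Wang--Zhou characterization (Theorem~\ref{Thm:DNWZ}) to conclude that $\varphi$ is psh, hence USC, but that theorem \emph{assumes} upper semi-continuity as a hypothesis. For a merely measurable $\varphi$ you cannot apply it, and your proposed workaround (``a.e.\ equal to a psh $\tilde\varphi$, then a careful comparison forces $\varphi=\tilde\varphi$ pointwise'') is not substantiated: it is not a standard result that the optimal $L^p$-extension condition for a measurable weight yields an a.e.\ psh representative, and even granting that, pinning down the pointwise values requires exactly the kind of semi-continuity argument you are trying to avoid. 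Your fallback (``bootstrap LSC atop the already-known USC, then cite Theorem~\ref{MainThm:Inayama}'') inherits the same circularity, since the ``already-known USC'' rests on the invalid DNWZ step.

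The paper's route is more elementary and avoids this trap entirely by reversing your order: it first proves $\varphi$ is \emph{lower} semi-continuous directly from \eqref{Cond1}. Given $x_j\to x$, one takes an $f$ with $f(x)=1$ nearly realizing the infimum on $x+P$, and uses the $\geqslant$ half of \eqref{Cond1} on the shrunken cylinder $x_j+sP\Subset x+P$ to get $s^{2n}\textup{Vol}(P)|f(x_j)|^pe^{-\varphi(x_j)}\leqslant\int_{x+P}|f|^pe^{-\varphi}\leqslant\textup{Vol}(P)e^{-\varphi(x)+\varepsilon}$; letting $j\to\infty$, $s\nearrow1$, $\varepsilon\searrow0$ gives $\varliminf\varphi(x_j)\geqslant\varphi(x)$. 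Once $\varphi$ is LSC, $-\varphi$ is USC and the $\geqslant$ half of \eqref{Cond1} becomes the mean-value inequality for $|u|^pe^{-\varphi}$, so $-\varphi$ is psh by Proposition~\ref{Prop:EquivGrif}. Only then does the paper prove USC, by a Montel/Fatou argument using near-minimizers $f_j$ centered at $x_j$ (with $-\varphi$ psh supplying the compactness via Lemma~\ref{Lemma:GZ}); once $\varphi$ is USC, Theorem~\ref{Thm:DNWZ} legitimately applies and gives $\varphi$ psh. With $\pm\varphi$ both psh, $\varphi$ is pluriharmonic. Your distributional Hessian-extraction program and the mollification sketch are thus unnecessary; the whole argument is a two-step semi-continuity bootstrap using both halves of \eqref{Cond1} in tandem.
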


The main idea of the proof is that \eqref{Cond1} implies the lower/upper semi-continuity of $\varphi$. At the beginning, we don't know whether the infimum in \eqref{Cond1} is achievable. This is one of the difficulties in our proof. Notice that, the assumption that $\varphi$ is $\mathbb{R}$-valued is necessary (see Remark \ref{Rmk:NoInf}).

In the case of $n=1$ and $p=2$, assuming $\varphi$ is subharmonic, one can prove a stronger result: if \eqref{Cond1} holds for a single disc $x+\mathbb{D}_r$, then $\varphi$ is harmonic on $x+\mathbb{D}_r$. This result is a consequence of Theorem 1.11 of Guan-Mi \cite{GuanMi22}, and we will give a shorter proof by using Corollary 1.5 of \cite{GuanMi22}.

\begin{theorem}\label{MainThm3}
	Let $\varphi>-\infty$ be a subharmonic function on $\mathbb{D}$, then $\varphi$ is harmonic on $\mathbb{D}$ if and only if
	$$ \pi B_{\mathbb{D}}(0;e^{-\varphi}) = e^{\varphi(0)}. $$
\end{theorem}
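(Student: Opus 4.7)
The plan is to reformulate the hypothesis via the extremal description of the Bergman kernel,
\begin{equation*}
B_\mathbb{D}(0;e^{-\varphi})^{-1} = \inf\left\{\int_\mathbb{D}|f|^2 e^{-\varphi}\,d\lambda : f\in\mathcal{O}(\mathbb{D}),\,f(0)=1\right\},
\end{equation*}
so that the assumption $\pi B_\mathbb{D}(0;e^{-\varphi}) = e^{\varphi(0)}$ becomes the statement that this infimum equals $\pi e^{-\varphi(0)}$. Since Guan--Zhou's optimal $L^2$-extension theorem applied to the disc with subharmonic weight always yields the upper bound $\inf \leqslant \pi e^{-\varphi(0)}$, the theorem becomes a rigidity statement for the equality case of single-point optimal $L^2$-extension.

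For the \emph{only if} direction, write $\varphi = 2\textup{Re}\,h$ for some $h\in\mathcal{O}(\mathbb{D})$, which is possible because $\mathbb{D}$ is simply connected. The substitution $g(z) := f(z)\,e^{h(0)-h(z)}$ is a bijection of $\{f\in\mathcal{O}(\mathbb{D}) : f(0)=1\}$ onto itself and satisfies $|f|^2 e^{-\varphi} = e^{-\varphi(0)}|g|^2$. The classical mean value inequality $\int_\mathbb{D}|g|^2\,d\lambda \geqslant \pi$, with equality iff $g\equiv 1$, then supplies both the matching lower bound $\inf \geqslant \pi e^{-\varphi(0)}$ and the extremizer $f = e^{h-h(0)}$.

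For the \emph{if} direction, Hilbert-space compactness in $A^2(\mathbb{D},e^{-\varphi})$ produces an extremal $f_0 \in \mathcal{O}(\mathbb{D})$ with $f_0(0)=1$ and $\int|f_0|^2 e^{-\varphi}\,d\lambda = \pi e^{-\varphi(0)}$. I would then apply Corollary 1.5 of \cite{GuanMi22}, which characterizes the equality case of optimal $L^2$-extension on a planar domain; specialized to one-point extension from $0\in\mathbb{D}$, it forces $f_0$ to be zero-free on $\mathbb{D}$ and $\varphi - 2\log|f_0|$ to be harmonic. Since $f_0$ is zero-free on $\mathbb{D}$, $2\log|f_0|$ is itself harmonic, and so $\varphi = (\varphi - 2\log|f_0|) + 2\log|f_0|$ is harmonic as well.

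The main obstacle is extracting the precise rigidity information from Corollary 1.5 of \cite{GuanMi22}---in particular, confirming in the disc setting that the extremizer $f_0$ has no zeros and that $\varphi$ differs from $2\log|f_0|$ by a globally harmonic function. Once this structural conclusion is granted, harmonicity of $\varphi$ follows in one line.
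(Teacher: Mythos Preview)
Your ``only if'' direction is correct and matches the paper's. The gap is in the ``if'' direction: you invoke Corollary~1.5 of \cite{GuanMi22} as a black box that supposedly outputs ``$f_0$ is zero-free and $\varphi-2\log|f_0|$ is harmonic,'' but you yourself flag that you have not checked this is what the corollary actually says. In fact the paper uses Corollary~1.5 quite differently: not as a structural statement about the extremizer, but as a \emph{proof technique}---a concavity-versus-linearity contradiction applied after an auxiliary construction. The paper also remarks that the direct black-box route goes through Theorem~1.11 of \cite{GuanMi22}, not Corollary~1.5.

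Concretely, the missing idea in your argument is the construction of a competitor weight. The paper assumes $i\partial\bar\partial\varphi\not\equiv 0$, locates a small disc $\mathbb{D}(x;r)\Subset\mathbb{D}\setminus\{0\}$ carrying some of the mass, and builds a subharmonic $\widetilde\varphi$ with $\widetilde\varphi\geqslant\varphi$, $\widetilde\varphi\equiv\varphi$ outside $\mathbb{D}(x;r)$, and $\widetilde\varphi\not\equiv\varphi$ inside. One then studies the minimal $L^2$ integrals $I_\varphi(s)$ and $I_{\widetilde\varphi}(s)$ over sublevel discs $\{|z|^2<s\}$: the hypothesis forces $I_\varphi(s)=\pi e^{-\varphi(0)}s$ to be exactly linear, while comparison with $\widetilde\varphi$ produces two incompatible slope inequalities that violate the concavity of $I_{\widetilde\varphi}$ (this is where the essential idea of \cite[Corollary~1.5]{GuanMi22} enters). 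None of this passes through properties of the extremizer $f_0$ of the sort you conjecture. So either cite Theorem~1.11 of \cite{GuanMi22} with its precise hypotheses verified, or supply the perturbation $\widetilde\varphi$ and the concavity argument; as written, the ``if'' direction is not a proof but a hope about what an unexamined reference contains.
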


Here, $B_\mathbb{D}(\cdot;e^{-\varphi})$ denotes the weighted Bergman kernel of $\mathbb{D}$, i.e.
$$ B_{\mathbb{D}}(0;e^{-\varphi}) := \sup\left\{ |f(0)|^2: f\in\mathcal{O}(\mathbb{D}), \int_\mathbb{D}|f|^2e^{-\varphi}d\lambda\leqslant1 \right\}. $$
Since $\varphi$ is subharmonic, by the optimal $L^2$ extension theorem, $\pi B_{\mathbb{D}}(0;e^{-\varphi}) \geqslant e^{\varphi(0)}$. The above theorem shows that the equality holds if and only if $\varphi$ is harmonic.

This result is similar to Suita's conjecture: let $\Omega$ be an open Riemann surface admitting nontrivial Green's function, then $\pi B_\Omega(x)\geqslant c_\beta(x)^2$, and the equality holds if and only if $\Omega$ is conformally equivalent to the unit disc $\mathbb{D}$ less a possible closed polar set. Here, $c_\beta$ denotes the logarithmic capacity of $\Omega$.
The inequality part of Suita's conjecture was proved by B{\l}ocki \cite{Blocki13} and Guan-Zhou \cite{GuanZhou12}, and the equality part was proved by Guan-Zhou \cite{GuanZhou15}.
In short, Theorem \ref{MainThm3} characterizes the weight and Suita's conjecture characterizes the base. 

Theorem \ref{MainThm2} can also be generalized to the case of holomorphic vector bundles. Let $(E,h)$ be a holomorphic vector bundle equipped with a \textbf{smooth} Hermitian metric, Inayama \cite{Inayama} showed that $(E,h)$ is curvature flat (i.e. $\Theta_h\equiv0$) if and only if $(E,h)$ satisfies the equality part of the optimal $L^2$-extension condition. In this paper, we show that the smoothness assumption in this characterization can also be dropped and the ``optimal $L^2$-extension condition'' can be replaced by the ``optimal $L^p$-extension condition''.

\begin{theorem}\label{MainThm4}
Let $E$ be a holomorphic vector bundle over a domain $\Omega\subset\mathbb{C}^n$ and $p>0$ be a constant. Let $h$ be a singular Hermitian metric on $E$ such that $0<\det h<+\infty$ everywhere, then the following conditions are equivalent:\par
\begin{itemize}
	\item[(i)] $h$ is smooth and $\Theta_h\equiv0$;
	\item[(ii)] for any holomorphic cylinder $x+P\subset\Omega$ and any $v\in E_x$,
	\begin{equation}\label{Cond2}
		\inf\left\{ \int_{x+P}|f|_h^pd\lambda: f\in\Gamma(x+P,E), f(x)=v \right\} = \textup{Vol}(P)|v|_h^p.
	\end{equation}
	\item[(iii)] there exists a positive continuous function $\gamma\ll1$ on $\Omega$ such that \eqref{Cond2} holds for any holomorphic cylinder $x+P\Subset\Omega$ with $\mathfrak{d}(P)<\gamma(x)$ and any $v\in E_x$.
\end{itemize}
\end{theorem}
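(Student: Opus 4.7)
The implications $(i)\Rightarrow(ii)\Rightarrow(iii)$ are easy: when $\Theta_h\equiv0$, $E$ locally admits a holomorphic frame of parallel sections in which $h$ has constant Gram matrix, so the parallel extension of any $v\in E_x$ realizes equality in \eqref{Cond2}; and $(ii)\Rightarrow(iii)$ is immediate. The substance of the theorem is the direction $(iii)\Rightarrow(i)$, which I plan to reduce to Theorem~\ref{MainThm2} by a rank-one duality trick. For each local non-vanishing holomorphic section $u$ of $E^*$ defined on some open $U\subset\Omega$, set $\varphi_u:=p\log|u|_{h^*}$; the goal is to verify that $\varphi_u$ satisfies condition~(iii) of Theorem~\ref{MainThm2}, whence every such $\varphi_u$ is pluriharmonic.

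The upper bound $\leqslant$ in Theorem~\ref{MainThm2}(iii) for $\varphi_u$ comes from $h$-duality. Given $x\in U$ and small $x+P$, let $v_x\in E_x$ be the unique minimizer of $|\cdot|_h$ subject to $u(v_x)=1$, so $|v_x|_h^p=e^{-\varphi_u(x)}$. Apply the hypothesis~(iii) of Theorem~\ref{MainThm4} to $v_x$ to produce near-optimal holomorphic extensions $f\in\Gamma(x+P,E)$ and set $F:=u(f)\in\mathcal{O}(x+P)$; since $F(x)=1$ and $|u(f)|\leqslant|u|_{h^*}|f|_h$ pointwise,
$$ \int_{x+P}|F|^p e^{-\varphi_u}\,d\lambda \;\leqslant\; \int_{x+P}|f|_h^p\,d\lambda \;\longrightarrow\; \operatorname{Vol}(P)\, e^{-\varphi_u(x)}. $$
For the matching lower bound I first observe that the $\geqslant$-half of the infimum condition in (iii) is precisely the sub-mean-value inequality for $|f|_h^p$, hence $h$ is Griffiths semi-positive in the Berndtsson--P\u{a}un sense and $\varphi_u$ is psh. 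To show $\int_{x+P}|F|^p e^{-\varphi_u}\,d\lambda\geqslant\operatorname{Vol}(P)e^{-\varphi_u(x)}$ for every $F\in\mathcal{O}(x+P)$ with $F(x)=1$, I would apply the lower bound of (iii) to sections $F\cdot\tilde v$, where $\tilde v\in\Gamma(x+P,E)$ is holomorphic with $u(\tilde v)\equiv1$ and $\tilde v(x)=v_x$, and then approximate the pointwise Riesz $h$-dual of $u$ (which is anti-holomorphic, not holomorphic) by a sequence of such $\tilde v_k$ so that $|\tilde v_k|_h^p\to e^{-\varphi_u}$ in $L^1(x+P)$; the approximation is to be carried out via $L^2$-solutions of $\bar\partial$ within the subbundle $\ker u\subset E$, which are available because $h$ is already known to be Griffiths semi-positive.

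Once Theorem~\ref{MainThm2} gives pluriharmonicity of every $\varphi_u$, we deduce that $|u|_{h^*}=e^{\varphi_u/p}$ is locally the modulus of a holomorphic function, hence smooth; polarizing over a local holomorphic frame of $E^*$ upgrades this to smoothness of $h^*$ and therefore of $h$, and the Chern curvature formula in a local holomorphic frame then identifies pluriharmonicity of $\log|u|_{h^*}$ for all non-vanishing local holomorphic $u$ with $\Theta_h\equiv0$, completing $(iii)\Rightarrow(i)$. The main obstacle is the approximation step for the lower bound: because the Riesz dual of $u$ is anti-holomorphic, one must build holomorphic $\tilde v_k$ obeying the hard constraint $u(\tilde v_k)\equiv1$ while making $|\tilde v_k|_h$ converge to $|u|_{h^*}^{-1}$ in an $L^p$ sense on small cylinders; executing this quantitative $\bar\partial$-argument (most naturally via a H\"ormander-type estimate adapted to the holomorphic subbundle $\ker u$) is the technical heart of the proof.
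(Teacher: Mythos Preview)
Your reduction to Theorem~\ref{MainThm2} via $\varphi_u=p\log|u|_{h^*}$ has a genuine gap in the lower bound, and it also contains a sign error that obscures what the hypothesis actually buys. The $\geqslant$-half of \eqref{Cond2} is the sub-mean-value inequality for $|f|_h^p$ with $f$ a holomorphic section of $E$; by Proposition~\ref{Prop:EquivGrif} this yields Griffiths semi-\emph{negativity} of $(E,h)$, not semi-positivity. Consequently it does \emph{not} give that $\varphi_u=p\log|u|_{h^*}$ is psh (that would require $(E^*,h^*)$ to be Griffiths semi-negative, i.e.\ $(E,h)$ semi-positive). More seriously, the approximation you propose is essentially circular. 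Since $u(\tilde v_k)\equiv1$ forces $|\tilde v_k(y)|_h\geqslant|u(y)|_{h^*}^{-1}$ at every $y$, asking that $|\tilde v_k|_h^p\to|u|_{h^*}^{-p}$ in $L^1$ means a single \emph{holomorphic} section is nearly $h$-minimal in the affine slice $\{u=1\}$ at almost every point of $x+P$; the Riesz-dual section that does this is only measurable at this stage, and producing holomorphic approximants with this simultaneous near-optimality is tantamount to exhibiting a parallel section, i.e.\ to the flatness you are trying to prove. The H\"ormander route you sketch cannot close this gap: Griffiths semi-positivity (even if you had it) does not give $L^2$ estimates for $\bar\partial$ on $E$ or on $\ker u$ in rank $>1$, and nothing here provides Nakano-type positivity.

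The paper takes a different, direct route that avoids any scalar reduction. From \eqref{Cond2} it first proves $h$ is upper semi-continuous and Griffiths semi-negative (Step~1), then lower semi-continuous and Griffiths semi-positive (Step~2, via Montel/Fatou compactness on near-minimizers), so $h$ is continuous and both semi-positive and semi-negative. Continuity lets one invoke Raufi's result (Lemma~\ref{Lemma:Raufi}) to define $\Theta_h=\overline{\partial}(h^{-1}\partial h)$ with measure coefficients; pairing with test data and using both curvature signs forces $\Theta_h=0$ as a current (Step~3), and an elementary bootstrap then gives smoothness of $h$ (Step~4). The decisive difference is that the paper never needs to manufacture a holomorphic section that is pointwise $h$-optimal on a set of positive measure; it extracts semi-positivity and semi-negativity separately and lets Raufi's curvature-current machinery do the cancellation.
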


As before, at the beginning, we don't know whether the infimum in \eqref{Cond1} is achievable.
Another technical problem arises when we considering the Chern curvature of a singular Hermitian metric.
When $E$ is a line bundle, the Chern curvature current $i\Theta_h := i\overline{\partial}(h^{-1}\partial h)$ is well-defined as long as $\log h\in L_\textup{loc}^1(X)$.
However, Raufi's \cite{Raufi} example showed that defining the curvature is not possible in general.
In \cite{Raufi} and \cite{Ina-JGA}, with some additional regularity conditions, it was proved that the Chern curvature current of a Griffiths semi-positive/negative singular Hermitian metric has measure coefficients.

Our strategy is as follows: firstly, we show that \eqref{Cond2} implies the continuity of $h$; subsequently, $\Theta_h:=\overline{\partial}(h^{-1}\partial h)$ is well-defined and vanishes in the sense of currents; finally, we show that $h$ is smooth and flat. Notice that, $(E,h)$ is curvature flat if and only if there exists local unitary holomorphic frame field (see Lemma \ref{Lemma:FlatUnitary}).

The remaining parts of this article are organized as follows. In section \ref{Sec:Pre}, we recall some preparatory results. In section \ref{Sec:Quan}, we prove a quantitative characterization of Griffiths positivity. In section \ref{Sec:PH}, we prove two characterizations of pluriharmonic functions. In section \ref{Sec:Flat}, we prove a characterization of flatness. In the appendix, we study the regularity of the infimum that appeared in \eqref{Cond1}.

\section{Preliminaries} \label{Sec:Pre}

In this paper, $d\lambda$ and $\omega:=i\partial\bar{\partial}|z|^2$ always denote the Lebesgue measure and the standard K\"ahler form of $\mathbb{C}^n$. For any $a\in\mathbb{C}$, $x\in\mathbb{C}^n$ and $r,s\in\mathbb{R}_+$, we denote $\mathbb{D}(a;r):=\{\tau\in\mathbb{C}:|\tau-a|<r\}$ and $\mathbb{B}^n(x;s):=\{z\in\mathbb{C}^n:|z-x|<s\}$. For simplicity, $\mathbb{D}_r:=\mathbb{D}(0;r)$ and $\mathbb{B}_s^n:=\mathbb{B}^n(0;s)$.

A \textit{holomorphic cylinder} in $\mathbb{C}^n$ is a domain of the form $P_{A,r,s}:=A(\mathbb{D}_r\times\mathbb{B}_s^{n-1})$, where $A\in \textbf{U}(n)$ and $r,s>0$. It is well-known that psh functions satisfy the mean value inequality on holomorphic cylinders. Conversely, this property characterizes all psh functions.

\begin{lemma}[see \cite{DNW21}] \label{Lemma:CharPsh}
Let $\varphi$ be an upper semi-continuous function on a domain $\Omega\subset\mathbb{C}^n$, then $\varphi$ is psh if and only if
\begin{equation} \label{Eq:MeanValIneq}
	\varphi(x) \leqslant \frac{1}{\textup{Vol}(P)} \int_{x+P}\varphi d\lambda
\end{equation}
 for any $x\in\Omega$ and any sufficiently small holomorphic cylinder $x+P\subset\Omega$.
\end{lemma}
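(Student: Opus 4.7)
My plan is to address the two directions separately. For the forward direction (psh implies the cylinder sub-mean-value), after the unitary normalization $A = \textup{Id}$ I would iterate the sub-mean-value property of psh functions: first on the transverse ball, using that $\varphi(\tau,\cdot)$ is psh on $\mathbb{B}_s^{n-1}$ (the restriction of a psh function to an affine complex subspace is psh), and then on the disc $\mathbb{D}_r$, using that $\varphi(\cdot,0)$ is subharmonic on $\mathbb{D}_r$; Fubini assembles the two bounds. The case of a general $A$ reduces to this via the biholomorphic (hence unitary) invariance of plurisubharmonicity and of Lebesgue measure.

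For the converse, fix $x \in \Omega$ and a unit vector $v \in \mathbb{C}^n$; the goal is to prove that $\tau \mapsto \varphi(x + \tau v)$ is subharmonic on a neighborhood of $0$, which together with upper semi-continuity is the classical definition of plurisubharmonicity. Extend $v$ to an orthonormal basis encoded by a unitary $A$ with $A e_1 = v$, and apply the hypothesis to the cylinder $x + A(\mathbb{D}_r \times \mathbb{B}_s^{n-1})$ for sufficiently small $r,s>0$:
\begin{equation*}
\varphi(x) \leqslant \frac{1}{\pi r^2 \textup{Vol}(\mathbb{B}_s^{n-1})} \int_{\mathbb{D}_r} \int_{\mathbb{B}_s^{n-1}} \varphi\bigl(x + \tau v + A(0, z')\bigr) \, dz' \, d\tau.
\end{equation*}
The crux is to let $s \to 0^+$ keeping $r$ fixed. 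For each $\tau$, upper semi-continuity at $x + \tau v$ gives
\begin{equation*}
\limsup_{s \to 0^+} \frac{1}{\textup{Vol}(\mathbb{B}_s^{n-1})} \int_{\mathbb{B}_s^{n-1}} \varphi\bigl(x + \tau v + A(0, z')\bigr) \, dz' \leqslant \varphi(x + \tau v);
\end{equation*}
moreover, since $\varphi$ is u.s.c.\ it is bounded above by some constant on a compact neighborhood of the closed cylinder, so the inner averages are dominated uniformly in $\tau$ and $s$. The reverse Fatou lemma then applies to the outer $\tau$-integral and yields the disc sub-mean-value $\varphi(x) \leqslant \frac{1}{\pi r^2} \int_{\mathbb{D}_r} \varphi(x + \tau v) \, d\tau$ for all sufficiently small $r$. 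Since $x$ and $v$ are arbitrary, this gives the sub-mean-value on arbitrarily small discs in every complex direction at every point, hence the subharmonicity of $\varphi$ along every complex line.

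I expect the main technical obstacle to be the justification of the limit $s \to 0^+$. It depends crucially on the local upper bound on $\varphi$ supplied by upper semi-continuity, which legitimizes reverse Fatou, and on the observation that the ``sufficiently small'' clause in the hypothesis still allows $s$ to shrink to $0$ for a given $r$. Once this passage to the limit is set up carefully, the rest is routine.
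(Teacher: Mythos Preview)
The paper does not supply its own proof of this lemma; it simply cites \cite{DNW21} and uses the statement as a black box. Your argument is correct and is the natural self-contained proof: iterate the sub-mean-value property (disc then transverse ball) for the forward direction, and for the converse shrink the transverse radius $s\to0^+$ using upper semi-continuity and reverse Fatou to recover the disc sub-mean-value in every complex direction. The only delicate point you have already flagged and handled correctly---namely that ``sufficiently small'' in the hypothesis permits sending $s\to0$ while keeping $r$ fixed (indeed $\mathfrak{d}(P_{A,r,s})$ decreases as $s\searrow0$), and that the local upper bound from upper semi-continuity dominates the inner averages uniformly, legitimizing reverse Fatou. One cosmetic remark: to conclude subharmonicity of $\tau\mapsto\varphi(x+\tau v)$ you need the disc inequality centered at \emph{every} point of the line, not just at $\tau=0$; your sentence ``since $x$ and $v$ are arbitrary'' covers this, but it may be worth stating explicitly that one re-centers at $x+\tau_0 v$.
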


\begin{corollary} \label{Cor:QuanPsh}
Let $\varphi$ be a $C^2$-smooth real-valued function defined in a neighborhood of $0\in\mathbb{C}^n$. If there is a constant $c\in\mathbb{R}$ so that
\begin{equation*}
\varphi(0) \leqslant \frac{1}{\textup{Vol}(P)} \int_{P}\left(\varphi(z)-c|z|^2\right) d\lambda_z
\end{equation*}
for any sufficiently small holomorphic cylinder $P$, then $i\partial\bar{\partial}\varphi\geqslant c\omega$ at $0$.
\end{corollary}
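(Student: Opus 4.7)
The plan is to Taylor-expand $\varphi$ at $0$ and match both sides of the hypothesized inequality along a one-parameter family of degenerating cylinders. Since $\varphi\in C^2$, one has
\[
\varphi(z)=\varphi(0)+2\operatorname{Re} L(z)+\operatorname{Re} Q(z)+H(z,\bar z)+R(z),
\]
where $L$ is complex-linear, $Q(z)=\sum_{j,k}\partial_j\partial_k\varphi(0)\,z_jz_k$ is a holomorphic quadratic form, $H(z,\bar z)=\sum_{j,k}\partial_j\bar\partial_k\varphi(0)\,z_j\bar z_k$ is the Levi form, and $R(z)=o(|z|^2)$. After the unitary substitution $z=Aw$, the cylinder $P_{A,r,s}$ becomes $\mathbb{D}_r\times\mathbb{B}^{n-1}_s$, which is invariant under independent phase rotations $w_a\mapsto e^{i\theta_a}w_a$ (the ball factor being rotationally invariant). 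It follows that $\int_P z_j\,d\lambda=\int_P z_jz_k\,d\lambda=0$, so the linear and purely holomorphic quadratic terms drop out, and the hypothesis reduces to
\[
0\leqslant\frac{1}{\textup{Vol}(P)}\int_P\bigl(H(z,\bar z)-c|z|^2\bigr)\,d\lambda+o\bigl(\mathfrak{d}(P)^2\bigr).
\]

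Next I compute the two surviving averages. In the $w$ coordinates, angular symmetry gives $\int_{\mathbb{D}_r\times\mathbb{B}_s^{n-1}} w_a\bar w_b\,d\lambda=0$ for $a\neq b$, while the direct computations
\[
\frac{1}{\pi r^2}\int_{\mathbb{D}_r}|w_1|^2\,d\lambda=\tfrac{r^2}{2},\qquad \frac{1}{\textup{Vol}(\mathbb{B}_s^{n-1})}\int_{\mathbb{B}_s^{n-1}}|w_a|^2\,d\lambda=\tfrac{s^2}{n}\ (a\geqslant 2),
\]
yield $\tfrac{1}{\textup{Vol}(P)}\int_P|z|^2\,d\lambda=\tfrac{r^2}{2}+\tfrac{n-1}{n}s^2=\mathfrak{d}(P)^2$. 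Writing $\tilde H_{a\bar b}=\sum_{j,k}\partial_j\bar\partial_k\varphi(0)\,A_{ja}\bar A_{kb}$ and $v:=Ae_1$, one reads off $\tilde H_{1\bar 1}=H(v,v)$ and $\operatorname{tr}\tilde H=\operatorname{tr}H$, so
\[
\frac{1}{\textup{Vol}(P)}\int_P H(z,\bar z)\,d\lambda=\tfrac{r^2}{2}H(v,v)+\tfrac{s^2}{n}\bigl(\operatorname{tr}H-H(v,v)\bigr).
\]

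Finally, given any unit vector $v\in\mathbb{C}^n$, choose $A\in\mathbf{U}(n)$ with $Ae_1=v$ and specialize to $s=r^2$ with $r\downarrow 0$. Then $s^2/n=o(r^2)$ and $\mathfrak{d}(P)^2=\tfrac{r^2}{2}+o(r^2)$, so the inequality becomes $0\leqslant\bigl(H(v,v)-c\bigr)\tfrac{r^2}{2}+o(r^2)$; dividing by $r^2/2$ and sending $r\downarrow 0$ yields $H(v,v)\geqslant c=c|v|^2$. As $v$ is arbitrary, this says $i\partial\bar\partial\varphi(0)\geqslant c\omega$. The only mildly technical point is the remainder: on $P$ one has $|z|^2\leqslant r^2+s^2$, and the uniform bound $|R(z)|\leqslant\epsilon(\sqrt{r^2+s^2})|z|^2$ with $\epsilon\to 0$ makes its average at most $\epsilon\cdot\mathfrak{d}(P)^2$, which is cleanly absorbed in the $o(r^2)$ error. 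I expect no substantial obstacle.
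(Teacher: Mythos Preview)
Your argument is correct. The paper states this result as a corollary of Lemma~\ref{Lemma:CharPsh} without spelling out a proof; the intended reasoning is simply to apply the pointwise $C^2$ version of that lemma to $\psi:=\varphi-c|z|^2$, and your Taylor-expansion computation (killing the linear and holomorphic quadratic terms by symmetry, then degenerating the cylinder via $s=r^2$) is precisely the standard way to justify that pointwise implication, so the approaches coincide.
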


We need an explicit formula for $\int_P|z|^2d\lambda_z$. Let $P_{A,r,s}$ be a holomorphic cylinder in $\mathbb{C}^n$. Since $|z|^2$ is invariant under any unitary transformation,
\begin{align*}
\int_{P_{A,r,s}} |z|^2 d\lambda_z = &\, \int_{\mathbb{D}_r\times\mathbb{B}_s^{n-1}} |z|^2 d\lambda_z = \int_{\mathbb{D}_r\times\mathbb{B}_s^{n-1}} (|z_1|^2+|z'|^2) d\lambda_z \\
= &\, \textup{Vol}(\mathbb{B}_s^{n-1}) \int_{\mathbb{D}_r} |z_1|^2d\lambda_{z_1} + \textup{Vol}(\mathbb{D}_r) \int_{\mathbb{B}_s^{n-1}} |z'|^2d\lambda_{z'} \\
= &\, (\tfrac{1}{2}r^2+\tfrac{n-1}{n}s^2)\textup{Vol}(P_{A,r,s}).
\end{align*}
For convenience, we define the ``diameter'' of $P_{A,r,s}$ to be
\begin{equation} \label{Eq:Diam}
\mathfrak{d}(P_{A,r,s}):=\sqrt{\tfrac{1}{2}r^2+\tfrac{n-1}{n}s^2},	
\end{equation}
then
\begin{equation} \label{Eq:MeanInt}
\frac{1}{\textup{Vol}(P_{A,r,s})} \int_{P_{A,r,s}}|z|^2d\lambda_z = \mathfrak{d}(P_{A,r,s})^2.
\end{equation}

It is well-known that a Hermitian holomorphic vector bundle $(E,h)$ is Griffiths semi-negative if and only if $\log|u|_h$ is psh for any local holomorphic section $u$ of $E$. We have the following quantitative version of this fact.

\begin{lemma} \label{Lemma:QuanGrif}
Let $(E,h)$ be a holomorphic vector bundle over a domain $\Omega\subset\mathbb{C}^n$, equipped with a smooth Hermitian metric. Let $c\in\mathbb{R}$ be a constant, then $i\Theta_h \leqslant_\textup{Grif} -c\omega\otimes\textup{Id}_E$ at $x\in \Omega$ if and only if $i\partial\bar{\partial}\log|u|_h^2 \geqslant c\omega$ at $x$ for any local holomorphic section $u$ of $E$ with $u(x)\neq 0$. 
\end{lemma}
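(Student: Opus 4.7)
The plan is to reduce everything to the standard pointwise identity relating $i\partial\bar\partial\log|u|_h^2$ to the Chern curvature. For a holomorphic section $u$ (so $\bar\partial u=0$), I start from $\partial|u|_h^2=\langle D'u,u\rangle_h$ and $i\partial\bar\partial|u|_h^2=i\langle D'u,D'u\rangle_h-i\langle\Theta_h u,u\rangle_h$, where $D'$ is the $(1,0)$-part of the Chern connection. Applying $i\partial\bar\partial\log f=(i\partial\bar\partial f)/f-(i\partial f\wedge\bar\partial f)/f^2$ with $f=|u|_h^2$ gives
\[
i\partial\bar\partial\log|u|_h^2=\frac{-i\langle\Theta_h u,u\rangle_h}{|u|_h^2}+\frac{|u|_h^2\,i\langle D'u,D'u\rangle_h-i\langle D'u,u\rangle_h\wedge\langle u,D'u\rangle_h}{|u|_h^4}.
\]
A fibrewise Cauchy-Schwarz (Lagrange identity) argument shows the second summand is a non-negative $(1,1)$-form, so one always has $i\partial\bar\partial\log|u|_h^2\geqslant -i\langle\Theta_h u,u\rangle_h/|u|_h^2$, with equality at a point $x$ whenever $D'u(x)=0$.

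The forward direction is then immediate: if $i\Theta_h\leqslant_\textup{Grif}-c\omega\otimes\textup{Id}_E$ at $x$, then by definition $i\langle\Theta_h u,u\rangle_h(x)\leqslant -c|u(x)|_h^2\,\omega$ for every local holomorphic $u$ with $u(x)\neq 0$, and dividing by $|u|_h^2$ combines with the universal lower bound above to give $i\partial\bar\partial\log|u|_h^2(x)\geqslant c\omega$.

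For the converse, the key point is to realise equality in the universal lower bound so as to isolate the curvature term. Given $\xi\in E_x\setminus\{0\}$, I pick a normal holomorphic frame $(e_\alpha)$ centred at $x$, which exists because $h$ is smooth: $h_{\alpha\bar\beta}(x)=\delta_{\alpha\beta}$ and the Chern connection matrix $\theta=h^{-1}\partial h$ vanishes at $x$. Writing $\xi=\xi^\alpha e_\alpha(x)$ and setting $u:=\xi^\alpha e_\alpha$ with constant coefficients, I obtain $u(x)=\xi$ and $D'u(x)=0$, so the identity collapses at $x$ to
\[
i\partial\bar\partial\log|u|_h^2(x)=\frac{-i\langle\Theta_h\xi,\xi\rangle_h}{|\xi|_h^2}.
\]
The hypothesis $i\partial\bar\partial\log|u|_h^2(x)\geqslant c\omega$ then forces $-i\langle\Theta_h\xi,\xi\rangle_h\geqslant c|\xi|_h^2\,\omega$ at $x$, and since $\xi\in E_x$ was arbitrary this is exactly $i\Theta_h\leqslant_\textup{Grif}-c\omega\otimes\textup{Id}_E$. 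No step here feels deep; the only thing requiring care is ensuring the normal frame exists and delivers $D'u(x)=0$, which is a direct consequence of the smoothness of $h$ together with standard Hermitian linear algebra.
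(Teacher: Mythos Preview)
Your proof is correct and follows essentially the same approach as the paper. The paper carries out the identical computation in local coordinates with respect to a normal holomorphic frame (so that $h_{\alpha\bar\beta}(x)=\delta_{\alpha\beta}$ and $dh_{\alpha\bar\beta}(x)=0$), obtaining explicitly the curvature term $-\sum R_{i\bar j\alpha\bar\beta}u_\alpha\overline{u_\beta}/|u|_h^2$ plus a second term shown non-negative by Cauchy--Schwarz, and for the converse likewise takes $u=\sum\xi_\alpha e_\alpha$ with constant coefficients to kill that second term; your presentation is simply the intrinsic (Chern connection) version of the same argument.
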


\begin{proof}
We choose a holomorphic frame $(e_1,\ldots,e_r)$ of $E$ in some neighborhood of $x$ such that $h_{\alpha\overline{\beta}}(x) = \delta_{\alpha\beta}$ and $dh_{\alpha\overline{\beta}}(x)=0$, where $h_{\alpha\overline{\beta}} := \langle{e_\alpha,e_\beta}\rangle_h$. Then the components of $h\Theta_h = h\overline{\partial}(h^{-1}\partial h)$ are
\begin{equation*}
	R_{i\overline{j}\alpha\overline{\beta}} = -\frac{\partial^2 h_{\alpha\overline{\beta}}}{\partial z_i\partial\overline{z}_j}
	+ \sum_{\alpha',\beta'} h^{\alpha'\overline{\beta'}} \frac{\partial h_{\alpha\overline{\beta'}}}{\partial z_i} \frac{\partial h_{\alpha'\overline{\beta}}}{\partial\overline{z}_j}
	\overset{(\text{at }x)}{=} -\frac{\partial^2 h_{\alpha\overline{\beta}}}{\partial z_i\partial\overline{z}_j}(x).
\end{equation*}

Let $u=\sum_\alpha u_\alpha e_\alpha$ be any local holomorphic section of $E$ with $u(x)\neq 0$, then
\begin{multline*}
	\frac{\partial^2}{\partial z_i\partial\overline{z}_j}|_x (\log|u|_h^2) = -\frac{1}{|u(x)|_h^2} \sum_{\alpha,\beta}R_{i\overline{j}\alpha\overline{\beta}}(x)u_\alpha(x)\overline{u_\beta(x)} \\
	+ \frac{1}{|u(x)|_h^4} \left(\sum_\alpha\frac{\partial u_\alpha}{\partial z_i}\overline{\frac{\partial u_\alpha}{\partial z_j}} \sum_\beta u_\beta\overline{u_\beta} - \sum_\alpha\frac{\partial u_\alpha}{\partial z_i}\overline{u_\alpha} \sum_\beta u_\beta\overline{\frac{\partial u_\beta}{\partial z_j}}\right).
\end{multline*}

If $i\Theta_h \leqslant_\textup{Grif} -c\omega\otimes\textup{Id}_E$ at $x$, then
\begin{equation*}
	-\sum_{i,j,\alpha,\beta} R_{i\overline{j}\alpha\overline{\beta}}(x) u_\alpha(x)\overline{u_\beta(x)}a_i\overline{a_j} \geqslant c|u(x)|_h^2|a|^2, \quad \forall a\in\mathbb{C}^n.
\end{equation*}
By the Cauchy-Schwarz inequality, the second term on the right hand side is always positive semi-definite. Therefore,
\begin{equation*}
	i\partial\bar{\partial}\log|u|_h^2 \geqslant c\omega \text{ at }x.
\end{equation*}

Conversely, for any non-zero vectors $a=(a_1,\ldots,a_n)\in\mathbb{C}^n$ and $\xi=(\xi_1,\ldots,\xi_r)\in\mathbb{C}^r$, we can define a local holomorphic section $u$ of $E$ by $u(z)=\sum_\alpha \xi_\alpha e_\alpha(z)$. If $i\partial\bar{\partial}\log|u|_h^2\geqslant c\omega$ at $x$, then
\begin{equation*}
	-\sum_{i,j,\alpha,\beta} R_{i\overline{j}\alpha\overline{\beta}}(x) a_i\overline{a_j}\xi_\alpha\overline{\xi_\beta} = \sum_\alpha|\xi_\alpha|^2 \sum_{i,j}\frac{\partial^2}{\partial z_i\partial\overline{z}_j}|_x(\log|u|_h^2) a_i\overline{a_j} \geqslant c|a|^2|\xi|^2.
\end{equation*}
This completes the proof.
\end{proof}

\begin{remark}
Let $E\to X$ be a holomorphic vector bundle equipped with a smooth Hermitian metric $h$ and let $\rho$ be a continuous real $(1,1)$-form on $X$.
By similar arguments as Lemma \ref{Lemma:QuanGrif}, we can prove that $i\Theta_h \leqslant_\textup{Grif} -\rho\otimes\textup{Id}_E$ at $x\in X$ if and only if $i\partial\bar{\partial}\log|u|_h^2 \geqslant \rho$ at $x$ for any local holomorphic section $u$ of $E$ with $u(x)\neq 0$.
\end{remark}

\begin{definition}[{see \cite{BP08,Raufi,PT18}}] \label{Def:GrifPos}
Let $E$ be a holomorphic vector bundle and $h$ be a singular Hermitian metric on $E$. We say that $h$ is \emph{Griffiths semi-negative} if $\log|u|_h$ is psh for any local holomorphic section $u$ of $E$. We say that $h$ is \emph{Griffiths semi-positive} if the dual metric $h^*$ on $E^*$ is Griffiths semi-negative.
\end{definition}

\begin{proposition}[see \cite{Raufi}]\label{Prop:EquivGrif}
Let $(E,h)$ be a holomorphic vector bundle equipped with a singular Hermitian metric and $p>0$ be a constant, then the following conditions are equivalent:
\begin{enumerate}
	\item[(i)] $(E,h)$ is Griffiths semi-negative;
	\item[(ii)] $\log|u|_h$ is psh for any local holomorphic section $u$ of $E$;
	\item[(iii)] $|u|_h^p$ is psh for any local holomorphic section $u$ of $E$;
	\item[(iv)] $(E^*,h^*)$ is Griffiths semi-positive.
\end{enumerate} 
\end{proposition}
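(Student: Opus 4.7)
The plan is to dispatch the easy equivalences first and devote the main work to the $L^p$-to-log step. The implication (i) $\Leftrightarrow$ (ii) is literally Definition \ref{Def:GrifPos}, and (i) $\Leftrightarrow$ (iv) reduces to the canonical pointwise identification $(E^{**},h^{**}) = (E,h)$, valid wherever $h$ is positive definite; by the standing hypothesis $0 < \det h < +\infty$ a.e., this holds on a full-measure set, so chasing Definition \ref{Def:GrifPos} twice yields the equivalence. Similarly (ii) $\Rightarrow$ (iii) is a one-liner: $t\mapsto e^{pt}$ is convex and increasing, hence $|u|_h^p = \exp(p\log|u|_h)$ is psh whenever $\log|u|_h$ is (with $e^{-\infty}=0$).

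The substantive direction is (iii) $\Rightarrow$ (ii). The key observation is that (iii) is assumed for \emph{every} local holomorphic section, so for any holomorphic function $g$ on the domain $U$ of a fixed section $u$, the product $gu$ is again a section and
\begin{equation*}
|gu|_h^p \;=\; |g|^p\,|u|_h^p \quad\text{is psh on } U.
\end{equation*}
Specializing to $g = e^{q}$ for holomorphic polynomials $q$ on $\mathbb{C}^n$, the function $|u|_h^p \cdot e^{p\operatorname{Re}(q)}$ is psh for every such $q$. I would then restrict to an arbitrary complex line $\zeta\mapsto x_0+\zeta\xi$: by choosing $q(z) = \tilde q(\langle z-x_0,\bar\xi\rangle)$ with $\tilde q$ an arbitrary one-variable polynomial, the non-negative usc function $v(\zeta):=|u(x_0+\zeta\xi)|_h^p$ satisfies $v\cdot e^{\operatorname{Re}(\tilde q)}$ subharmonic for every $\tilde q$. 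Now invoke the classical maximum-principle lemma: if $v\geqslant 0$ is usc on a disc and $v\cdot e^{\operatorname{Re}(\tilde q)}$ is subharmonic for all polynomials $\tilde q$, then $\log v$ is subharmonic. Its proof is a Runge-plus-max-principle argument, approximating an arbitrary continuous harmonic majorant $H = \operatorname{Re}(F)$ of $\log v$ on a slightly smaller disc by polynomials $\tilde q_k \to F$ uniformly, applying the max principle to the subharmonic $v\cdot e^{-\operatorname{Re}(\tilde q_k)}$, and letting $k\to\infty$. This gives $\log v$ subharmonic along every complex line, hence $\log|u|_h$ is psh.

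The main obstacle I anticipate is not conceptual but a matter of regularity bookkeeping: one must verify the upper semi-continuity of $\log|u|_h$ needed to invoke the 1-variable lemma, and handle the zero locus of $u$ carefully when passing to $\log$. Luckily, upper semi-continuity is automatic because $|u|_h^p$ is psh (hence usc) by (iii) and $\log$ is continuous and increasing, so $\log|u|_h = \tfrac{1}{p}\log(|u|_h^p)$ is usc with value $-\infty$ on $\{u=0\}$; the zero locus is absorbed in the max-principle step through the convention $e^{-\infty}=0$ together with monotone convergence. The conceptual insight driving the proof is that (iii) is a hypothesis about \emph{all} sections simultaneously, permitting the multiplication of $u$ by arbitrary holomorphic exponentials $e^{q}$ and thereby decoupling the log-estimate from the Hermitian metric itself.
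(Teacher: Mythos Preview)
Your proposal is correct and follows essentially the same route as the paper: both reduce to (iii) $\Rightarrow$ (ii), apply hypothesis (iii) to the twisted section $e^{q}u$ to obtain that $|u|_h^p e^{p\operatorname{Re}q}$ is psh for every polynomial $q$, and then invoke the classical characterization that $\log v$ is psh iff $v e^{\operatorname{Re}q}$ is psh for all holomorphic polynomials $q$. The only difference is presentational: the paper quotes this characterization directly in $\mathbb{C}^n$, whereas you reduce to complex lines and sketch the one-variable Runge/maximum-principle argument behind it.
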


\begin{proof}
We only need to show that (iii) implies (ii).
Recall that, for a non-negative function $v$, $\log v$ is psh if and only if $ve^{p\operatorname{Re}g}$ is psh for every holomorphic polynomial $g$. Let $u$ be a local holomorphic section of $E$, the condition (iii) says that $|ue^g|_h^p=|u|_h^pe^{p\operatorname{Re}g}$ is psh for any holomorphic polynomial $g$. Therefore, $\log|u|_h^p$ is psh.
\end{proof}

\begin{lemma}[see \cite{BP08,PT18}] \label{Lemma:PT}
Let $E$ be a holomorphic vector bundle over a domain $\Omega\subset\mathbb{C}^n$ and $h$ be a Griffiths semi-negative singular Hermitian metric on $E$. Assume that $U\Subset V\Subset\Omega$ are open subsets such that $E|_V$ is trivial. Then

$(1)$ There is a sequence of Griffiths negative smooth Hermitian metrics $\{h_\nu\}_{\nu=1}^\infty$ on $E|_U$ decreasing pointwise to $h$. In particular, $\log \det h$ is a psh function.

$(2)$ There exists a constant $C_U>0$ such that $C_U^{-1}(\det h)I_r \leqslant h \leqslant C_U I_r$ on $U$, where $r=\operatorname{rank}E$ and $I_r$ is the $r\times r$ identity matrix.
\end{lemma}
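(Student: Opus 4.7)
The plan is to prove (1) by a standard regularization via convolution in a local trivialization, and (2) by exploiting the log-plurisubharmonicity of the matrix entries of $h$ evaluated on constant sections.

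For (1), fix the trivialization $E|_V\cong V\times\mathbb{C}^r$ and view $h$ as a measurable matrix-valued map on $V$. Take a radial non-negative mollifier $\rho_\nu(w)=\nu^{2n}\rho(\nu w)$ with $\int\rho=1$ and define $h_\nu:=h*\rho_\nu$ entry-wise on $U$ for $\nu$ large enough that $U+\operatorname{supp}\rho_\nu\Subset V$; each $h_\nu$ is smooth and positive definite. The crucial point is that $h_\nu$ is Griffiths semi-negative: for any holomorphic section $u$ of $E|_U$ and any small $w$, the translated metric $(\tau_w^*h)(z):=h(z-w)$ is Griffiths semi-negative on $U$ (since $\log|\tilde u|^2_h$ is psh and $z\mapsto z-w$ is biholomorphic), so
\[
|u(z)|^2_{h_\nu}=\int|u(z)|^2_{\tau_w^*h}\,\rho_\nu(w)\,d\lambda(w)
\]
is a positive weighted average of log-psh functions in $z$; because convolution with a positive function preserves log-plurisubharmonicity (which follows from the convexity and componentwise monotonicity of $(x_1,x_2)\mapsto\log(e^{x_1}+e^{x_2})$, iterated and passed to the limit), $|u|^2_{h_\nu}$ is log-psh, whence $h_\nu$ is Griffiths semi-negative. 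Applying the sub-mean inequality to the psh function $\langle h(\cdot)\xi,\xi\rangle$ for each constant $\xi\in\mathbb{C}^r$, together with the radiality of $\rho$, shows $\langle h_\nu\xi,\xi\rangle\searrow\langle h\xi,\xi\rangle$ as $\nu\to\infty$, so $h_\nu\searrow h$ in the Hermitian ordering. A further small multiplicative perturbation $h_\nu\exp(\varepsilon_\nu(|z|^2-C_U))$ with $\varepsilon_\nu\searrow 0$ upgrades each $h_\nu$ to strictly Griffiths negative while preserving monotonicity. For the last assertion, each $\log\det h_\nu$ is psh (the curvature of the determinant line bundle equals $\operatorname{tr}i\Theta_{h_\nu}\leqslant0$), and $\log\det h_\nu\searrow\log\det h$, so the decreasing limit is psh.

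For (2), still in the trivialization, each diagonal entry $h_{\alpha\bar\alpha}=|e_\alpha|^2_h$ is log-psh (apply Proposition \ref{Prop:EquivGrif} to the holomorphic section $e_\alpha$), hence psh, hence uniformly bounded above by some $C$ on $U\Subset V$. The Cauchy--Schwarz inequality $|h_{\alpha\bar\beta}|^2\leqslant h_{\alpha\bar\alpha}h_{\beta\bar\beta}\leqslant C^2$ then bounds all entries, giving $h\leqslant C_UI_r$ for a constant $C_U$ depending only on $r$ and $C$. For the lower bound, apply the adjugate identity $(\det h)h^{-1}=\operatorname{adj}(h)$: the entries of $\operatorname{adj}(h)$ are $(r-1)\times(r-1)$ minors of $h$ and are therefore bounded on $U$ by a polynomial expression in the entries of $h$, yielding $\operatorname{adj}(h)\leqslant C'I_r$, equivalently $h\geqslant(\det h/C')I_r$; enlarging $C_U$ so that $C_U\geqslant C'$ gives the stated double inequality.

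The main obstacle I expect is the preservation of Griffiths semi-negativity under convolution, which rests on the log-sum-exp convexity input applied pointwise in the parameter $w$; once that is in place, both the monotone decrease (which needs a careful radial choice of $\rho_\nu$) and the upgrade to strict Griffiths negativity are straightforward bookkeeping with constants, and (2) reduces to elementary matrix arithmetic combined with the psh-ness of the diagonal entries.
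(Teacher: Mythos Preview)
The paper does not prove this lemma; it is stated with a citation to \cite{BP08,PT18} and no proof environment follows. So there is no ``paper's own proof'' to compare against, and I will simply assess your argument.

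Your approach is the standard one and is essentially correct. The key step---that entrywise convolution $h_\nu=h*\rho_\nu$ remains Griffiths semi-negative---is justified correctly: for a local holomorphic $u$ and fixed small $w$, the function $z\mapsto u(z)^*h(z-w)u(z)$ is log-psh (translate $u$ and use Griffiths semi-negativity of $h$), and a positive superposition of log-psh functions is again log-psh by the convexity/monotonicity of $\log\sum e^{x_i}$ passed to the limit. The monotone decrease $h_\nu\searrow h$ follows because $\langle h(\cdot)\xi,\xi\rangle$ is psh for each constant $\xi$. Part (2) is clean: bounding the diagonal entries (psh, hence bounded above on $\overline U$) plus Cauchy--Schwarz gives the upper bound, and the adjugate identity gives the lower bound.

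There is one small glitch in the upgrade to strict negativity. With your factor $\exp(\varepsilon_\nu(|z|^2-C_U))$, the exponent is $\leqslant 0$ on $U$, so as $\varepsilon_\nu\searrow 0$ this factor \emph{increases} to $1$; then the product $h_\nu\cdot\exp(\varepsilon_\nu(|z|^2-C_U))$ need not be monotone, and worse, it may dip below $h$. The fix is trivial: use $\exp(\varepsilon_\nu|z|^2)$ instead. This factor is $\geqslant 1$ and decreases in $\nu$, so $\tilde h_\nu:=h_\nu e^{\varepsilon_\nu|z|^2}$ satisfies $\tilde h_\nu\geqslant h_\nu\geqslant h$, is decreasing in $\nu$, converges to $h$, and has $i\Theta_{\tilde h_\nu}=i\Theta_{h_\nu}-\varepsilon_\nu\omega\otimes\mathrm{Id}_E<_{\mathrm{Grif}}0$.
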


In \cite{Raufi}, Raufi constructed an example showing that the formal Chern curvature current $\Theta_h:=\overline{\partial}(h^{-1}\partial h)$ of a Griffiths semi-negative singular Hermitian metric $h$ may not have measure coefficients. Even so, this is still possible under some additional regularity conditions.

\begin{lemma}[see \cite{Raufi}] \label{Lemma:Raufi}
Let $(E,h)$ be a holomorphic vector bundle equipped with a Griffiths semi-negative \textbf{continuous} Hermitian metric. Let $\{h_\nu\}_{\nu=1}^\infty$ be any sequence of Griffiths negative smooth Hermitian metrics decreasing pointwise to $h$.
Then, in any local trivialization of $E$, the entries of $\partial h$ are $L_\textup{loc}^2$-forms, the entries of the Chern curvature $\Theta_h := \overline{\partial}(h^{-1}\partial h)$ are currents with measure coefficients, and $\Theta_{h_\nu} := \overline{\partial}(h_\nu^{-1}\partial h_\nu)$ converge weakly to $\Theta_h$ as currents with measure coefficients.  
\end{lemma}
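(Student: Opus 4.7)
The plan is to reduce the claim to Bedford--Taylor regularity for bounded psh functions and then pass to the limit in the approximating sequence.

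I work in a local holomorphic trivialization with frame $(e_1,\dots,e_r)$. Polarization expresses each matrix entry
\begin{equation*}
h_{\alpha\bar\beta}=\tfrac{1}{4}\bigl(|e_\alpha+e_\beta|_h^2-|e_\alpha-e_\beta|_h^2+i|e_\alpha+ie_\beta|_h^2-i|e_\alpha-ie_\beta|_h^2\bigr)
\end{equation*}
as a complex linear combination of squared norms $|u|_h^2$ of local holomorphic sections. By Proposition \ref{Prop:EquivGrif}, $\log|u|_h^2$ is psh, so $|u|_h^2=e^{\log|u|_h^2}$ is psh, and continuity of $h$ makes it locally bounded. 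The same applies to each smooth $h_\nu$; since $|s|_{h_\nu}^2$ decreases to $|s|_h^2$ for every $s$, one has $h\leq h_\nu\leq h_1$ locally as Hermitian matrices, so these psh quantities are uniformly bounded in $\nu$. Bedford--Taylor regularity then gives $h_{\alpha\bar\beta}\in W^{1,2}_{\textup{loc}}$, hence $\partial h\in L^2_{\textup{loc}}$ entrywise.

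Next I upgrade to convergence. Dini's theorem yields $h_\nu\to h$ uniformly on compacts, hence $h_\nu^{-1}\to h^{-1}$ uniformly on compacts (using $\det h>0$ and continuity of $h$, so $h^{-1}$ is continuous and locally bounded). The Chern--Levine--Nirenberg inequality gives a uniform bound on the local masses of $i\partial\bar\partial|u|_{h_\nu}^2$, and the integration-by-parts identity
\begin{equation*}
\int\chi\cdot i\partial\psi\wedge\bar\partial\psi\wedge\omega^{n-1}=-\int\psi\chi\cdot i\partial\bar\partial\psi\wedge\omega^{n-1}-\int\psi\cdot i\partial\chi\wedge\bar\partial\psi\wedge\omega^{n-1}
\end{equation*}
(with $\chi$ a cutoff) applied to $\psi=|u|_{h_\nu}^2-|u|_h^2$, combined with the uniform convergence $\psi\to 0$, yields $\partial|u|_{h_\nu}^2\to\partial|u|_h^2$ strongly in $L^2_{\textup{loc}}$. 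By polarization, $\partial h_\nu\to\partial h$ in $L^2_{\textup{loc}}$. Multiplying by the uniformly convergent $h_\nu^{-1}$ shows $h_\nu^{-1}\partial h_\nu\to h^{-1}\partial h$ in $L^2_{\textup{loc}}$, so $\Theta_h:=\bar\partial(h^{-1}\partial h)$ is a well-defined current and $\Theta_{h_\nu}\to\Theta_h$ as currents.

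To see measure coefficients, note that each $-i\Theta_{h_\nu}$ is a positive-semidefinite Hermitian-matrix-valued $(1,1)$-form. Its trace $-i\operatorname{tr}\Theta_{h_\nu}=i\partial\bar\partial\log\det h_\nu$ has locally bounded mass uniformly in $\nu$, because $\log\det h_\nu\searrow\log\det h$ is a decreasing sequence of locally bounded psh functions (Lemma \ref{Lemma:PT}) and Chern--Levine--Nirenberg bounds the mass by the sup norm. The Cauchy--Schwarz inequality for positive matrix-valued $(1,1)$-forms then controls each entry of $-i\Theta_{h_\nu}$ by the trace, giving uniform local mass bounds for every entry. Weak-$*$ compactness produces a matrix-valued-measure limit, which by the current-level uniqueness of limits above must coincide with $\Theta_h$. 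I expect the main technical obstacle to be the strong $L^2_{\textup{loc}}$ convergence $\partial h_\nu\to\partial h$, which demands careful handling of the cross terms in the integration-by-parts identity when cutoffs are deployed.
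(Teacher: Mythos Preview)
The paper does not prove this lemma; it simply quotes the result from \cite{Raufi}. Your argument is essentially a reconstruction of Raufi's proof and is correct in substance: polarization reduces the matrix entries of $h$ and $h_\nu$ to the locally bounded psh functions $|u|_h^2$ and $|u|_{h_\nu}^2$; the Chern--Levine--Nirenberg-type estimate gives the uniform $L^2_{\textup{loc}}$ gradient bound; your integration-by-parts identity (together with $\psi\to0$ uniformly and the uniform bounds on $\|\bar\partial\psi\|_{L^2}$ and on the mass of $i\partial\bar\partial\psi$) upgrades this to strong $L^2_{\textup{loc}}$ convergence of $\partial h_\nu$; and the trace bound handles the measure coefficients.

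Two small points of imprecision. What you call ``Bedford--Taylor regularity'' is really the elementary CLN-type gradient estimate for locally bounded psh functions; Bedford--Taylor theory proper concerns wedge products and Monge--Amp\`ere operators. More importantly, calling $-i\Theta_{h_\nu}$ a ``positive-semidefinite Hermitian-matrix-valued $(1,1)$-form'' and then invoking Cauchy--Schwarz on it tacitly assumes Nakano positivity, whereas the hypothesis gives only Griffiths negativity of $h_\nu$. The conclusion you want still holds, but via a two-step argument: Griffiths positivity of $-i\Theta_{h_\nu}$ says that for each fixed $\xi\in\mathbb{C}^r$ the $(1,1)$-form $\langle -i\Theta_{h_\nu}\xi,\xi\rangle_{h_\nu}$ is positive, so its $(i,\bar j)$-coefficients are dominated by its trace; polarization in $\xi$ then bounds the off-diagonal $E$-entries by the diagonal ones; summing the diagonal traces over $\alpha$ recovers $-i\operatorname{tr}\Theta_{h_\nu}=i\partial\bar\partial\log\det h_\nu$, whose mass is uniformly controlled by CLN since $\log\det h_\nu\searrow\log\det h$ are locally bounded psh. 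This should replace the appeal to a generic matrix-valued Cauchy--Schwarz, which as stated would require the stronger Nakano condition.
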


Recall that, for psh functions on pseudoconvex domains, B{\l}ocki \cite{Blocki13} and Guan-Zhou \cite{GuanZhou12,GuanZhou15} proved several $L^2$ extension theorems with optimal estimates. Using Berndtsson-P\u{a}un's iterative method, Guan-Zhou \cite{GuanZhou15} also obtained an optimal $L^p$ extension theorem, where $0<p<2$. In this paper, we only need the following special case.

\begin{theorem}[Optimal $L^p$ extension theorem {\cite{GuanZhou15}}] \label{Thm:OptLp}
Let $\varphi$ be a psh function on a holomorphic cylinder $P$ and $0<p\leqslant 2$ be a constant. If $\varphi(0)\neq-\infty$, then there exists a holomorphic function $f\in\mathcal{O}(P)$ such that $f(0)=1$ and
$$ \int_P |f|^pe^{-\varphi} d\lambda \leqslant \textup{Vol}(P)e^{-\varphi(0)}. $$
\end{theorem}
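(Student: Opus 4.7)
The plan is to reduce to the case $p = 2$, which is the B{\l}ocki-Guan-Zhou optimal $L^2$ extension theorem on the cylinder $P$, and then to handle $0 < p < 2$ by a Berndtsson-P\u{a}un style iteration. For the $p = 2$ case one applies H\"ormander's $L^2$ estimate for $\bar{\partial}$ on $P$ together with the undetermined-coefficient auxiliary function technique of B{\l}ocki / Guan-Zhou, or equivalently invokes the minimum principle for weighted Bergman kernels exploiting the product structure of $P$; this directly produces $f \in \mathcal{O}(P)$ with $f(0) = 1$ and $\int_P |f|^2 e^{-\varphi} d\lambda \leqslant \textup{Vol}(P) e^{-\varphi(0)}$.

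For $0 < p < 2$, set $B := \textup{Vol}(P) e^{-\varphi(0)}$ and iterate. Let $f_1$ be the $L^2$ extension just constructed. Given $f_k \in \mathcal{O}(P)$ with $f_k(0) = 1$, the weight $\varphi_k := \varphi + (2 - p) \log|f_k|$ is psh with $\varphi_k(0) = \varphi(0) \neq -\infty$, so applying the $L^2$ extension with weight $\varphi_k$ produces $f_{k+1} \in \mathcal{O}(P)$ satisfying $f_{k+1}(0) = 1$ and
\begin{equation*}
\int_P |f_{k+1}|^2 |f_k|^{-(2-p)} e^{-\varphi} d\lambda \leqslant B.
\end{equation*}
Setting $A_k := \int_P |f_k|^p e^{-\varphi} d\lambda$, H\"older's inequality with conjugate exponents $2/p$ and $2/(2-p)$ applied to the factorization
\begin{equation*}
|f_{k+1}|^p e^{-\varphi} = \bigl(|f_{k+1}|^2 |f_k|^{-(2-p)} e^{-\varphi}\bigr)^{p/2} \cdot \bigl(|f_k|^p e^{-\varphi}\bigr)^{(2-p)/2}
\end{equation*}
gives $A_{k+1} \leqslant B^{p/2} A_k^{(2-p)/2}$. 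Writing $t_k := A_k / B$, one has $t_{k+1} \leqslant t_k^{(2-p)/2}$ with exponent strictly less than $1$, hence $\limsup_k t_k \leqslant 1$.

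To extract a limit, note that $\varphi$ is psh, hence locally bounded above on $P$, so $e^{-\varphi}$ is locally bounded below by a positive constant on any $K \Subset P$; this converts the eventual bound $A_k \leqslant (1+\epsilon) B$ into a uniform $L^p(K, d\lambda)$ bound, and the mean value inequality for the psh function $|f_k|^p$ upgrades it to uniform local boundedness. Montel's theorem extracts a subsequence $f_{k_j} \to f \in \mathcal{O}(P)$ locally uniformly, with $f(0) = 1$, and Fatou's lemma delivers $\int_P |f|^p e^{-\varphi} d\lambda \leqslant B$.

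The principal technical obstacle is ensuring each $A_k$ is actually finite so that the H\"older step is legitimate: if $\varphi$ carries strong $-\infty$ poles, $e^{-\varphi}$ need not be locally integrable, nor need $A_0 = \int_P e^{-\varphi} d\lambda$ be finite. The standard remedy is to regularize by the truncations $\varphi^{(\nu)} := \max(\varphi, -\nu)$, which are psh with $\varphi^{(\nu)}(0) = \varphi(0)$ for $\nu$ large, run the iteration for each $\nu$ to obtain $f^{(\nu)}$ with the $L^p$ bound against $e^{-\varphi^{(\nu)}}$, and pass to the limit $\nu \to \infty$ via Montel together with Fatou and monotone convergence of $e^{-\varphi^{(\nu)}} \nearrow e^{-\varphi}$.
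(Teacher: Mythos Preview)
Your proposal is correct and follows essentially the same Berndtsson--P\u{a}un iteration as the paper: apply the optimal $L^2$ extension with weight $\varphi+(2-p)\log|f_k|$, use H\"older with exponents $2/p$ and $2/(2-p)$ to obtain $A_{k+1}\leqslant B^{p/2}A_k^{(2-p)/2}$, and extract a limit via Montel and Fatou. The only cosmetic differences are that the paper regularizes by assuming $\varphi$ smooth on a neighborhood of $\overline{P}$ (rather than your truncation $\max(\varphi,-\nu)$) and starts from an arbitrary $f_1$ with finite $L^p$ integral rather than the $L^2$ extension itself; both choices serve the same purpose of ensuring the initial $A_1$ is finite.
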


\begin{proof}
For the convenience of readers, we recall the proof for $p\in(0,2)$.
By standard approximation procedures, we may assume that $\varphi$ is smooth and defined in a neighborhood of $\overline{P}$, then there exists an $f_1\in\mathcal{O}(P)$ with $f_1(0)=1$ and
$$ C := \int_P |f_1|^pe^{-\varphi} d\lambda < +\infty. $$
Since $\varphi+(2-p)\log|f_1|$ is a psh function, by the optimal $L^2$ extension theorem, there exists an $f_2\in\mathcal{O}(P)$ such that $f_2(0)=1$ and
$$ \int_P|f_2|^2e^{-\varphi-(2-p)\log|f_1|}d\lambda \leqslant \textup{Vol}(P)e^{-\varphi(0)}. $$
By H\"older's inequality,
\begin{align*}
\int_P|f_2|^pe^{-\varphi}d\lambda & = \int_P \big(|f_1|^pe^{-\varphi}\big)^{\frac{2-p}{2}} \big(|f_2|^2e^{-\varphi-(2-p)\log|f_1|}\big)^{\frac{p}{2}} d\lambda \\
& \leqslant \left(\int_P|f_1|^pe^{-\varphi}d\lambda\right)^{\frac{2-p}{2}} \left(\int_P|f_2|^2e^{-\varphi-(2-p)\log|f_1|}d\lambda\right)^{\frac{p}{2}}  \\
& \leqslant C^{\frac{2-p}{2}} \left(\textup{Vol}(P)e^{-\varphi(0)}\right)^{\frac{p}{2}}.
\end{align*}
We repeat the same procedure and get a sequence of holomorphic functions $\{f_k\}_{k=1}^\infty$ on $P$ such that $f_k(0)=1$ and
\begin{align*}
\int_P|f_{k+1}|^pe^{-\varphi}d\lambda & \leqslant \left(\int_P|f_k|^pe^{-\varphi}d\lambda\right)^{\frac{2-p}{2}} \left(\textup{Vol}(P)e^{-\varphi(0)}\right)^{\frac{p}{2}} \\
& \leqslant \cdots \leqslant C^{(\frac{2-p}{2})^k} \left(\textup{Vol}(P)e^{-\varphi(0)}\right)^{1-(\frac{2-p}{2})^k}.
\end{align*}
Applying Montel's theorem, we obtain a holomorphic function $f\in\mathcal{O}(P)$ such that $f(0)=1$ and $\int_P|f|^pe^{-\varphi}d\lambda \leqslant \textup{Vol}(P)e^{-\varphi(0)}$.
\end{proof}

\begin{definition}[see \cite{DNW21,DNWZ22}] \label{Def:OptLpExt}
(1) Let $\varphi$ be an upper semi-continuous function on a domain $\Omega\subset\mathbb{C}^n$. Let $p>0$ be a constant, we say that $\varphi$ satisfies the \emph{optimal $L^p$-extension condition} if for any $x\in\Omega$ with $\varphi(x)\neq-\infty$ and any holomorphic cylinder $x+P\subset\Omega$, there exists a holomorphic function $f\in\mathcal{O}(x+P)$ such that $f(x)=1$ and $\int_{x+P}|f|^pe^{-\varphi}d\lambda \leqslant \textup{Vol}(P)e^{-\varphi(x)}$.

(2) Let $E$ be a holomorphic vector bundle over a domain $\Omega\subset\mathbb{C}^n$, equipped with a singular Hermitian metric $h$. Let $p>0$ be a constant, we say that $(E,h)$ satisfies the \emph{optimal $L^p$-extension condition} if for any $x\in\Omega$, any $v\in E_x$ with $|v|_h=1$ and any holomorphic cylinder $x+P\subset\Omega$, there exists a holomorphic section $F\in\Gamma(x+P,E)$ such that $F(x)=v$ and $\int_{x+P}|F|_h^pd\lambda \leqslant \textup{Vol}(P)$.
\end{definition}

Theorem \ref{Thm:OptLp} says that psh functions satisfy the optimal $L^p$-extension condition $(0<p\leqslant2)$. Deng-Ning-Wang \cite{DNW21} showed that the converse is also true. The idea of their proof goes back to Guan-Zhou's \cite{GuanZhou15} approach to Berndtsson's theorem on the plurisubharmonic variation of Bergman kernels. Similarly, the optimal $L^p$-extension condition also implies the Griffiths semi-positivity \cite{DNWZ22}.

\begin{theorem}[see \cite{DNW21,DNWZ22}] \label{Thm:DNWZ}
$(1)$ Let $\varphi$ be an upper semi-continuous function on a domain $\Omega\subset\mathbb{C}^n$. If $\varphi$ satisfies the optimal $L^p$-extension condition for some $p>0$, then $\varphi$ is a psh function.

$(2)$ Let $E$ be a holomorphic vector bundle over a domain $\Omega\subset\mathbb{C}^n$ and let $h$ be a singular Hermitian metric on $E$ such that $|u|_{h^*}$ is upper semi-continuous for any local holomorphic section $u$ of $E^*$. If $(E,h)$ satisfies the optimal $L^p$-extension condition for some $p>0$, then $(E,h)$ is Griffiths semi-positive.
\end{theorem}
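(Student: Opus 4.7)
The plan is to verify, in both parts, the sub-mean-value inequality on small holomorphic cylinders and then invoke Lemma \ref{Lemma:CharPsh} (directly for (1), and via Proposition \ref{Prop:EquivGrif} for (2)). The two cases are parallel; the key trick in both is to combine the extension inequality with Jensen's inequality applied to the concave function $\log$, and then to absorb an auxiliary $\log|f|$ or $\log|F|_h$ term using the sub-mean-value property of plurisubharmonic functions.

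For part (1), fix $x\in\Omega$; if $\varphi(x)=-\infty$, the sub-mean inequality is vacuous, so assume $\varphi(x)>-\infty$. For any sufficiently small holomorphic cylinder $x+P\subset\Omega$, the optimal $L^p$-extension condition yields $f\in\mathcal{O}(x+P)$ with $f(x)=1$ and $\tfrac{1}{\textup{Vol}(P)}\int_{x+P}|f|^pe^{-\varphi}d\lambda\leqslant e^{-\varphi(x)}$. Taking $\log$ on both sides and applying Jensen's inequality to the concave function $\log$ against the probability measure $d\lambda/\textup{Vol}(P)$ gives
\[
-\varphi(x)\geqslant \frac{1}{\textup{Vol}(P)}\int_{x+P}\bigl(p\log|f|-\varphi\bigr)d\lambda.
\]
Since $\log|f|$ is plurisubharmonic with $\log|f(x)|=0$, its own sub-mean-value property forces $\int_{x+P}\log|f|\,d\lambda\geqslant 0$. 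Rearranging yields $\varphi(x)\leqslant \tfrac{1}{\textup{Vol}(P)}\int_{x+P}\varphi\,d\lambda$, and Lemma \ref{Lemma:CharPsh} concludes that $\varphi$ is psh.

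For part (2), by Proposition \ref{Prop:EquivGrif} the Griffiths semi-positivity of $(E,h)$ is equivalent to $\log|u|_{h^*}$ being psh for every local holomorphic section $u$ of $E^*$; this function is USC by hypothesis. Fix $x$ with $h_x$ non-degenerate and $|u(x)|_{h^*}>0$ (the other case being trivial), and pick $v\in E_x$ with $|v|_h=1$ realizing the fiberwise duality $|\langle u(x),v\rangle|=|u(x)|_{h^*}$. The optimal $L^p$-extension condition supplies $F\in\Gamma(x+P,E)$ with $F(x)=v$ and $\int_{x+P}|F|_h^p\,d\lambda\leqslant\textup{Vol}(P)$. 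Define the scalar holomorphic function $g(y):=\langle u(y),F(y)\rangle$, so that $|g|\leqslant|u|_{h^*}|F|_h$ pointwise and $|g(x)|=|u(x)|_{h^*}$. Plurisubharmonicity of $\log|g|$ gives
\[
\log|u(x)|_{h^*}=\log|g(x)|\leqslant \frac{1}{\textup{Vol}(P)}\int_{x+P}\log|u|_{h^*}\,d\lambda + \frac{1}{\textup{Vol}(P)}\int_{x+P}\log|F|_h\,d\lambda,
\]
while a second application of Jensen's inequality to $|F|_h^p$ shows that $\tfrac{p}{\textup{Vol}(P)}\int_{x+P}\log|F|_h\,d\lambda\leqslant\log\bigl(\tfrac{1}{\textup{Vol}(P)}\int_{x+P}|F|_h^p\,d\lambda\bigr)\leqslant 0$. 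This delivers the required sub-mean-value inequality, and Lemma \ref{Lemma:CharPsh} finishes the proof.

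The hardest point I anticipate is ensuring that the sub-mean inequality for $\log|u|_{h^*}$ holds at \emph{every} $x$, because the dual-norm argument needs $h_x$ to be positive-definite; at the (measure-zero) set of singular points one must argue by approximating $h$ by $h+\varepsilon\,\textup{Id}$ and letting $\varepsilon\downarrow 0$, or alternatively exploit upper semi-continuity of $\log|u|_{h^*}$ so that the mean-value inequality on a dense set of $x$ suffices. The local integrability needed to apply Jensen is automatic, since $\log|f|$ and $\log|F|_h$ are (dominated by) plurisubharmonic functions, hence locally integrable.
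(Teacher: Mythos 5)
Both parts replicate the Jensen-inequality argument that the paper itself employs to prove Theorem~\ref{Thm:QuanCharGrif} (whose $c=0$ case, via Corollary~\ref{Cor:QuanCharGrif}, is a local form of the present statement), so your approach matches the paper's and Part~(1) is correct. In Part~(2) the main chain of inequalities is also right, but the difficulty you flag at the end is a genuine gap, and neither of your suggested repairs works as stated: replacing $h$ by $h+\varepsilon\,\textup{Id}$ increases both $|v|_h$ and $\int_{x+P}|F|_h^p\,d\lambda$, so the extension estimate you hold for $h$ does not descend to the perturbed metric; and upper semi-continuity of $\log|u|_{h^*}$ gives $\varlimsup_{y\to x}\log|u(y)|_{h^*}\leqslant\log|u(x)|_{h^*}$, which is the wrong direction for propagating a sub-mean-value inequality from a dense set of good points to a bad point $x$. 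A cleaner device that avoids having to realize the dual norm is to run your chain of inequalities for \emph{every} $v\in E_x$ with $0<|v|_h\leqslant 1$ (scale $v$ to unit norm, invoke the extension condition, rescale the extension by $|v|_h$) and then take the supremum over all such $v$ at the end; when $h_x$ is positive definite this supremum equals $|u(x)|_{h^*}$ and the mean-value inequality at $x$ follows, which already covers a full-measure set. The residual issue --- points $x$ where $h_x$ is degenerate, so that $|u(x)|_{h^*}$ may equal $+\infty$ and there may be no admissible $v$ approximating the dual norm --- is not handled by your sketch and needs a separate argument showing that the hypotheses (the extension condition together with upper semi-continuity of each $|u|_{h^*}$) rule out such $x$, or else an adapted form of Lemma~\ref{Lemma:CharPsh} that tolerates an exceptional pluripolar set.
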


Finally, we recall some results that will be used in the subsequent sections.

\begin{lemma}[{\cite[Lemma 4.34]{GuanZhou15}}]\label{Lemma:GZ}
Let $\varphi\not\equiv-\infty$ be a psh function on a domain $\Omega\subset\mathbb{C}^n$ and $p>0$ be a constant. Let $\{f_j\}_{j=1}^\infty$ be a sequence of holomorphic functions on $\Omega$ such that $\sup_j \int_\Omega|f_j|^pe^{\varphi}d\lambda < +\infty$. Then $\{f_j\}_{j=1}^\infty$ is uniformly bounded on any compact subset of $\Omega$.
\end{lemma}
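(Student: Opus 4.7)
The plan rests on the observation that the product $g_j := |f_j|^p e^{\varphi}$ is itself plurisubharmonic, because $\log g_j = p\log|f_j| + \varphi$ is a sum of two psh functions (if $f_j\equiv 0$ the conclusion is trivial). Combined with the hypothesis $\sup_j\int_\Omega g_j\,d\lambda\le M<+\infty$, the sub-mean inequality for psh functions yields a locally uniform pointwise bound: for any $\delta>0$ and any $y\in\Omega_\delta:=\{z\in\Omega : \overline{B(z,\delta)}\subset\Omega\}$,
\[
    p\log|f_j(y)| + \varphi(y) \;\le\; C_\delta := \log\!\left(\frac{M}{\textup{Vol}(B(0,\delta))}\right).
\]

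A naive rearrangement gives $|f_j(y)|^p\le e^{C_\delta}e^{-\varphi(y)}$, but this fails to be uniform on compact sets since $\varphi$ may equal $-\infty$ on a pluripolar subset. To sidestep this degeneracy, I would apply the sub-mean property a second time, now to the psh function $\log|f_j|$ on a smaller ball $B(x,\delta')$, and substitute the pointwise bound above into the integrand:
\[
    \log|f_j(x)| \;\le\; \frac{1}{\textup{Vol}(B(0,\delta'))}\int_{B(x,\delta')}\log|f_j|\,d\lambda \;\le\; \frac{C_\delta}{p} \,-\, \frac{1}{p\,\textup{Vol}(B(0,\delta'))}\int_{B(x,\delta')}\varphi\,d\lambda,
\]
valid for any $x\in\Omega_{\delta+\delta'}$.

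For any compact $K\subset\Omega$ I choose $\delta,\delta'>0$ small enough that $K\subset\Omega_{\delta+\delta'}$. Since $\varphi$ is psh and not identically $-\infty$, it lies in $L^1_{\textup{loc}}(\Omega)$, so the averaged function $x\mapsto\int_{B(x,\delta')}\varphi\,d\lambda$ is continuous in $x$ and hence bounded on the compact $K$. This upgrades the pointwise bound to a uniform upper bound on $\log|f_j(x)|$ over $K$, which exponentiates to the required uniform bound on $|f_j|$.

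The main obstacle is passing from weighted control of $g_j$ to unweighted control of $|f_j|$ in the presence of the pluripolar singular set $\{\varphi=-\infty\}$; the double-averaging trick---one sub-mean applied to the psh function $g_j$, then a second sub-mean applied to the psh function $\log|f_j|$---is precisely what circumvents this degeneracy, by replacing the divergent pointwise factor $e^{-\varphi(x)}$ by the finite, continuous quantity $\exp\!\bigl(-\tfrac{1}{\textup{Vol}(B(0,\delta'))}\int_{B(x,\delta')}\varphi\,d\lambda\bigr)$.
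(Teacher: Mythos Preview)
The paper does not supply its own proof of this lemma; it merely cites \cite[Lemma 4.34]{GuanZhou15}. Your argument is correct and is in fact the standard one (and essentially the one given in the cited reference): first use the sub-mean-value inequality on the psh function $|f_j|^pe^{\varphi}$ to get a pointwise bound $p\log|f_j|+\varphi\le C_\delta$ on $\Omega_\delta$, then average the psh function $\log|f_j|$ over a smaller ball and invoke $\varphi\in L^1_{\textup{loc}}$ to replace the singular pointwise weight $e^{-\varphi(x)}$ by a locally bounded averaged quantity.

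One small point worth making explicit for completeness: the pointwise inequality $\log|f_j(y)|\le\frac{1}{p}(C_\delta-\varphi(y))$ is vacuous on the pluripolar set $\{\varphi=-\infty\}$, but since that set has Lebesgue measure zero the integrated inequality
\[
\int_{B(x,\delta')}\log|f_j|\,d\lambda \;\le\; \frac{1}{p}\Bigl(C_\delta\,\textup{Vol}(B(0,\delta'))-\int_{B(x,\delta')}\varphi\,d\lambda\Bigr)
\]
still holds. Otherwise the write-up is clean.
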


\begin{lemma} \label{Lemma:FlatUnitary}
Let $(E,h)$ be a holomorphic vector bundle over a domain $\Omega\subset\mathbb{C}^n$, equipped with a smooth Hermitian metric. Then $\Theta_h\equiv0$ if and only if there exists a unitary holomorphic frame field of $(E,h)$ on any holomorphic cylinder $x+P\subset\Omega$.
\end{lemma}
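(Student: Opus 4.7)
The plan is to prove both directions using the standard correspondence between vanishing curvature and existence of parallel frames, exploiting that any holomorphic cylinder $x+P$ is contractible (hence simply connected).

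\textbf{Easy direction.} Suppose there is a unitary holomorphic frame $(e_1,\ldots,e_r)$ on $x+P$. Then the matrix $h_{\alpha\overline{\beta}}=\langle e_\alpha,e_\beta\rangle_h=\delta_{\alpha\beta}$ is constant, so $\partial h=0$ on $x+P$, and consequently $\Theta_h=\overline{\partial}(h^{-1}\partial h)=0$ on $x+P$. Since every point lies in such a cylinder, $\Theta_h\equiv0$ on $\Omega$.

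\textbf{Hard direction.} Assume $\Theta_h\equiv0$. Fix a holomorphic cylinder $x+P\subset\Omega$ and let $D=D'+\overline{\partial}$ denote the Chern connection of $(E,h)$. The curvature of $D$ equals $\Theta_h$, so $D$ is a flat connection. Because $x+P=A(\mathbb{D}_r\times\mathbb{B}_s^{n-1})$ is contractible, hence simply connected, parallel transport with respect to $D$ is path-independent. Choose an orthonormal basis $v_1,\ldots,v_r$ of $(E_x,h_x)$ and define $e_\alpha(y)$ as the parallel transport of $v_\alpha$ along any smooth path in $x+P$ joining $x$ to $y$. This produces smooth sections $e_1,\ldots,e_r$ on $x+P$ with $De_\alpha=0$.

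To finish, I extract the two needed properties from $De_\alpha=0$. Decomposing by type, $\overline{\partial}e_\alpha=0$, so each $e_\alpha$ is holomorphic. Since $D$ is compatible with $h$,
\begin{equation*}
d\langle e_\alpha,e_\beta\rangle_h=\langle De_\alpha,e_\beta\rangle_h+\langle e_\alpha,De_\beta\rangle_h=0,
\end{equation*}
so $\langle e_\alpha,e_\beta\rangle_h\equiv\delta_{\alpha\beta}$ on $x+P$. Thus $(e_1,\ldots,e_r)$ is a unitary holomorphic frame. I expect no real obstacle: the only point requiring care is the trivial holonomy of the flat connection $D$ on $x+P$, which follows at once from the contractibility of the holomorphic cylinder.
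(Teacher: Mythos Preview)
Your proof is correct. The easy direction matches the paper verbatim. For the hard direction, both you and the paper exploit flatness of the Chern connection on the contractible domain $x+P$, but the packaging differs. You argue geometrically: parallel transport of an orthonormal basis along paths (well-defined since $x+P$ is simply connected and $D$ is flat) yields sections with $De_\alpha=0$, and then holomorphicity and unitarity drop out immediately from the type decomposition and metric compatibility. The paper instead first invokes the Oka--Grauert principle to get a global holomorphic frame $e$ on $x+P$, then solves the matrix ODE $dg+(h^{-1}\partial h)g=0$ via the Frobenius theorem (the integrability condition being precisely $\Theta_h=0$), checks that $g$ is holomorphic from the $(0,1)$-part of the equation, and finally verifies by hand that $\tilde e=eg$ satisfies $d(g^*hg)=0$. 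Your route is shorter and more conceptual, and it sidesteps the appeal to Oka--Grauert; the paper's route is more explicit and self-contained at the level of matrix computations. Both are standard arguments for the equivalence between flat metric connections and locally constant unitary structures.
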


\begin{proof}
If $e=(e_1,\ldots,e_r)$ is a unitary holomorphic frame field of $(E,h)$ on $x+P$, then $h=(h_{\alpha\overline{\beta}})=I_r$ and $\Theta_h=\overline{\partial}(h^{-1}\partial h)=0$ on $x+P$.

Conversely, we assume that $\Theta_h\equiv0$. We fix a holomorphic cylinder $x+P\subset\Omega$. Since $x+P$ is contractible, the vector bundle $E|_{x+P}$ is topologically trivial. Since $x+P$ is Stein, by the Oka-Grauert principle (see \cite[Theorem 5.3.1]{Forstneric}), $E|_{x+P}$ is also holomorphically trivial.

We begin with an arbitrary holomorphic frame field $e=(e_1,\ldots,e_r)$ of $E|_{x+P}$ and try to find a holomorphic map $g:x+P\to\textup{GL}(r;\mathbb{C})$ such that the new frame $\tilde{e}:=eg$ is unitary. The following proof is a modification of \cite[\S 1.2]{KobVectBun}.

We consider a differential equation with unknown $g:x+P\to\mathbb{C}^{r\times r}$,
\begin{equation}\label{DiffEq}
	dg+(h^{-1}\partial h)g=0.
\end{equation}
The integrability condition is
$$ 0 = d(h^{-1}\partial h)g - (h^{-1}\partial h)dg = d(h^{-1}\partial h)g + (h^{-1}\partial h)(h^{-1}\partial h)g = \overline{\partial}(h^{-1}\partial h)g, $$
which is equivalent to $\Theta_h=0$. We choose an $a\in\text{GL}(r;\mathbb{C})$ so that $a^*h(x)a=I_r$. Here, $^*$ denotes the conjugate transpose of a matrix. By the Frobenius theorem, there exists a smooth solution $g(z)$ of \eqref{DiffEq} satisfying $g(x)=a$. The equation \eqref{DiffEq} can be decomposed as
$$ \overline{\partial}g=0, \quad \partial g+(h^{-1}\partial h)g=0. $$
In particular, $g:x+P\to\mathbb{C}^{r\times r}$ is holomorphic.

We consider a tube $(\tilde{e}_1,\ldots,\tilde{e}_r)$ of holomorphic sections defined by $\tilde{e}=eg$.
Let $\tilde{h}_{\alpha\overline{\beta}} = \langle{\tilde{e}_\alpha,\tilde{e}_\beta}\rangle_h$, then $\tilde{h}=(\tilde{h}_{\alpha\overline{\beta}})=g^*hg$. Clearly,
$$ \partial\tilde{h} = g^*(\partial h)g + g^*h\partial g = g^*h\big((h^{-1}\partial h)g + \partial g\big) = 0. $$
As $\overline{\partial}\tilde{h}=\overline{\partial\tilde{h}^t}= 0$, we know $d\tilde{h}=0$. Since $\tilde{h}(x)=a^*h(x)a=I_r$, it follows that $\tilde{h}\equiv I_r$. Consequently, $(\tilde{e}_1,\ldots,\tilde{e}_r)$ are linearly independent and unitary.
\end{proof}

\begin{lemma} \label{Lemma:NormIneq}
Let $v,w_1,\cdots,w_m$ be elements in a normed linear space $V$. For any $p>0$ and $\varepsilon>0$, there exists a constant $C$ depending only on $m,p,\varepsilon$ such that
\begin{equation*}
	\|v+w_1+\cdots+w_m\|^p \leqslant (1+\varepsilon)\|v\|^p + C(\|w_1\|^p+\cdots+\|w_m\|^p).
\end{equation*}
\end{lemma}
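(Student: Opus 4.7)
The plan is to reduce the vector inequality to a scalar one via the triangle inequality in $V$, then split into the cases $0<p\leqslant1$ and $p>1$. Writing $a=\|v\|$ and $b_i=\|w_i\|$, the triangle inequality immediately gives $\|v+w_1+\cdots+w_m\|\leqslant a+b_1+\cdots+b_m$, so it suffices to bound $(a+b_1+\cdots+b_m)^p$ in terms of $a^p$ and the $b_i^p$.

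For $0<p\leqslant 1$, I would use the subadditivity of $t\mapsto t^p$ on $[0,\infty)$, i.e.\ $(x+y)^p\leqslant x^p+y^p$ for $x,y\geqslant 0$, which follows from the elementary estimate $(1+t)^p\leqslant 1+t^p$ (equivalent to concavity of $t^p$). Iterating $m+1$ times gives $(a+b_1+\cdots+b_m)^p\leqslant a^p+b_1^p+\cdots+b_m^p$, so the lemma holds with $C=1$ (and $\varepsilon$ is not even needed).

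For $p>1$, the key scalar inequality is that for any $\varepsilon>0$ there exists $C_0=C_0(\varepsilon,p)$ such that $(a+b)^p\leqslant(1+\varepsilon)a^p+C_0 b^p$ for all $a,b\geqslant 0$. I would derive this by Jensen's inequality applied to the convex function $t\mapsto t^p$: for any $\theta\in(0,1)$,
\begin{equation*}
(a+b)^p=\left((1-\theta)\cdot\tfrac{a}{1-\theta}+\theta\cdot\tfrac{b}{\theta}\right)^p\leqslant(1-\theta)^{1-p}a^p+\theta^{1-p}b^p.
\end{equation*}
Since $(1-\theta)^{1-p}\to 1$ as $\theta\to 0^+$, one picks $\theta$ small enough that $(1-\theta)^{1-p}\leqslant 1+\varepsilon$ and sets $C_0=\theta^{1-p}$. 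Combining this with the triangle inequality and the power-mean estimate $(b_1+\cdots+b_m)^p\leqslant m^{p-1}(b_1^p+\cdots+b_m^p)$, we obtain the claim with $C=C_0\,m^{p-1}$.

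There is essentially no obstacle in this argument; it is a routine convexity/subadditivity exercise. The only mildly non-trivial point is the scalar inequality $(a+b)^p\leqslant(1+\varepsilon)a^p+C_0 b^p$ for $p>1$, which captures the fact that when $b/a$ is small the leading behavior of $(a+b)^p$ is $a^p$, with the deviation absorbed into a large multiple of $b^p$.
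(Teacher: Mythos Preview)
Your proof is correct and follows essentially the same approach as the paper: triangle inequality, then the subadditivity of $t\mapsto t^p$ for $0<p\leqslant1$, and for $p>1$ the scalar inequality $(a+b)^p\leqslant(1+\varepsilon)a^p+C_0b^p$ combined with the power-mean estimate $(\sum b_i)^p\leqslant m^{p-1}\sum b_i^p$. The only difference is cosmetic: the paper simply asserts the scalar inequality ``by elementary calculus'' (equivalently, $(1+x)^p\leqslant(1+\varepsilon)+C_{p,\varepsilon}x^p$), whereas you derive it explicitly via Jensen's inequality, which has the advantage of giving concrete constants $C_0=\theta^{1-p}$.
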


\begin{proof}
Case 1: $0<p\leqslant1$. Recall that, if $0<p\leqslant1$, then $(\sum_{j=0}^m t_j)^p \leqslant \sum_{j=0}^m t_j^p$ for any positive reals $t_j$. Therefore,
\begin{align*}
	\|v+w_1+\cdots+w_m\|^p & \leqslant (\|v\|+\|w_1\|+\cdots+\|w_m\|)^p \\
	& \leqslant \|v\|^p+\|w_1\|^p+\cdots+\|w_m\|^p.
\end{align*}
Case 2: $p>1$. By elementary calculus, it is easy to find a constant $C_{p,\varepsilon}>1$ such that
$$ (1+x)^p \leqslant (1+\varepsilon)+C_{p,\varepsilon} x^p,\quad x>0. $$
We set $w=w_1+\cdots+w_m$, then
$$ \|v+w\|^p \leqslant (\|v\|+\|w\|)^p \leqslant (1+\varepsilon)\|v\|^p + C_{p,\varepsilon}\|w\|^p. $$
Recall that, if $p>1$, then $(\sum_{j=1}^m t_j)^p \leqslant m^{p-1}(\sum_{j=1}^m t_j^p)$ for any positive reals $t_j$. Therefore,
\begin{gather*}
	\|w\|^p \leqslant (\|w_1\|+\cdots+\|w_m\|)^p \leqslant m^{p-1}(\|w_1\|^p+\cdots+\|w_m\|^p), \\
	\|v+w_1+\cdots+w_m\|^p \leqslant (1+\varepsilon)\|v\|^p + C_{p,\varepsilon} m^{p-1}(\|w_1\|^p+\cdots+\|w_m\|^p). \qedhere
\end{gather*}
\end{proof}

\section{A Quantitative Characterization of Griffiths Positivity} \label{Sec:Quan}

In the section, we prove a quantitative characterization of Griffiths positivity in terms of certain $L^2$-extension condition.

\begin{theorem} \label{Thm:QuanCharGrif}
Let $(E,h)$ be a holomorphic vector bundle over a domain $\Omega\subset\mathbb{C}^n$, equipped with a smooth Hermitian metric. Then the following are equivalent:
\begin{itemize}
\item[(i)] $i\Theta_h \geqslant_\textup{Grif} c\omega\otimes\textup{Id}_E$ at $x\in\Omega$, where $c\in\mathbb{R}$;
\item[(ii)] for any $\varepsilon>0$, there is a constant $\delta>0$ such that for any $\xi\in E_x$ and any holomorphic cylinder $x+P\subset\Omega$ with $\mathfrak{d}(P)<\delta$, there exists a holomorphic section $f\in\Gamma(x+P,E)$ satisfying $f(x)=\xi$ and
\begin{equation}
	\frac{1}{\textup{Vol}(P)}\int_{x+P}|f|_h^2d\lambda \leqslant (1-(c-\varepsilon)\mathfrak{d}(P)^2)|\xi|_h^2.
\end{equation}
\end{itemize}
\end{theorem}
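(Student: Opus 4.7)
The plan is to prove the two directions by separating the smooth-frame computation for (i)$\Rightarrow$(ii) from a duality-plus-Jensen argument for (ii)$\Rightarrow$(i).

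For (i)$\Rightarrow$(ii), I would work in a normal holomorphic frame $(e_1,\ldots,e_r)$ of $E$ near $x$, so that $h_{\alpha\overline{\beta}}(x)=\delta_{\alpha\beta}$, $dh_{\alpha\overline{\beta}}(x)=0$ and $h_{\alpha\overline{\beta}}(z)=\delta_{\alpha\beta}-\sum_{i,j}R_{i\overline{j}\alpha\overline{\beta}}(x)(z-x)_i\overline{(z-x)_j}+O(|z-x|^3)$. Given $\xi=\sum_\alpha \xi_\alpha e_\alpha(x)\in E_x$, take the candidate extension $f(z):=\sum_\alpha \xi_\alpha e_\alpha(z)$, whose norm squared is $|\xi|_h^2-Q_\xi(z-x)+O(|z-x|^3)$ with $Q_\xi(a):=\sum_{i,j,\alpha,\beta}R_{i\overline{j}\alpha\overline{\beta}}(x)\xi_\alpha\overline{\xi_\beta}\,a_i\overline{a_j}$. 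For any holomorphic cylinder $P=A(\mathbb{D}_r\times\mathbb{B}_s^{n-1})$, the unitary change of variables $z=Aw$ gives $\int_P Q_\xi(z)\,d\lambda_z=\textup{Vol}(P)\sum_k d_k(A^*Q_\xi A)_{kk}$, where $d_1=r^2/2$, $d_k=s^2/n$ for $k\geqslant 2$, and $\sum_k d_k=\mathfrak{d}(P)^2$. The Griffiths hypothesis (i) says $Q_\xi\geqslant c|\xi|_h^2\,I_n$, so $(A^*Q_\xi A)_{kk}\geqslant c|\xi|_h^2$ and hence $\frac{1}{\textup{Vol}(P)}\int_P Q_\xi(z)\,d\lambda\geqslant c|\xi|_h^2\mathfrak{d}(P)^2$. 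The remainder $O(\mathfrak{d}(P)^3)$ is absorbed into the slack $\varepsilon\mathfrak{d}(P)^2$ once $\mathfrak{d}(P)<\delta$ is small enough, delivering (ii).

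For (ii)$\Rightarrow$(i), by duality it suffices, via Lemma \ref{Lemma:QuanGrif} applied to $(E^*,h^*)$, to show $i\partial\bar{\partial}\log|u|_{h^*}^2\geqslant c\omega$ at $x$ for every local holomorphic section $u$ of $E^*$ with $u(x)\neq 0$. Fix such $u$, and choose $\xi\in E_x$ with $|\xi|_h=1$ and $|\langle u(x),\xi\rangle|=|u(x)|_{h^*}$ (the Riesz-dual vector). For $\varepsilon>0$ fixed, let $f$ be the section provided by (ii) on a small cylinder $x+P$. The function $\log|\langle u,f\rangle|^2$ is psh (logarithm of the square modulus of a holomorphic function), so the mean-value inequality gives
\begin{equation*}
\log|u(x)|_{h^*}^2=\log|\langle u(x),f(x)\rangle|^2\leqslant\frac{1}{\textup{Vol}(P)}\int_{x+P}\log|\langle u,f\rangle|^2\,d\lambda.
\end{equation*}
Using the pointwise Cauchy--Schwarz bound $|\langle u,f\rangle|^2\leqslant|u|_{h^*}^2|f|_h^2$ and Jensen's inequality on $\log|f|_h^2$ together with the $L^2$-estimate in (ii), the right-hand side is dominated by
\begin{equation*}
\frac{1}{\textup{Vol}(P)}\int_{x+P}\log|u|_{h^*}^2\,d\lambda+\log\!\bigl(1-(c-\varepsilon)\mathfrak{d}(P)^2\bigr).
\end{equation*}
Since $\log(1-t)\leqslant-t$ and $\mathfrak{d}(P)^2=\frac{1}{\textup{Vol}(P)}\int_{x+P}|z-x|^2\,d\lambda$, we obtain exactly the submean hypothesis of Corollary \ref{Cor:QuanPsh} with constant $c-\varepsilon$ for the smooth function $\log|u|_{h^*}^2$ near $x$. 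That corollary yields $i\partial\bar{\partial}\log|u|_{h^*}^2\geqslant(c-\varepsilon)\omega$ at $x$, and letting $\varepsilon\to 0^+$ finishes the direction.

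The main obstacle is the direction (ii)$\Rightarrow$(i): one has to resist trying to prove the curvature bound on $h$ directly, and instead recognize that passing to the dual bundle converts the $L^2$ estimate into a quantitative mean-value inequality for $\log|u|_{h^*}^2$. The delicate point is the chain Cauchy--Schwarz $\to$ subaveraging of a psh function $\to$ Jensen's inequality, which transforms the multiplicative slack $1-(c-\varepsilon)\mathfrak{d}(P)^2$ from (ii) into the additive term $-(c-\varepsilon)\mathfrak{d}(P)^2$ needed to feed into Corollary \ref{Cor:QuanPsh}; all other ingredients are essentially the Taylor expansion of $h$ and the explicit computation of $\int_P z_i\overline{z_j}\,d\lambda$ already recorded in the preliminaries.
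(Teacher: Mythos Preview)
Your proposal is correct and follows essentially the same route as the paper. The direction (ii)$\Rightarrow$(i) is identical: pass to the dual, pick a norming vector $\xi$, pair the extension $f$ with $u$, and run the chain mean-value inequality for $\log|\langle u,f\rangle|^2$ $\to$ Cauchy--Schwarz $\to$ Jensen on $\log|f|_h^2$ $\to$ $\log(1-t)\leqslant -t$ $\to$ Corollary~\ref{Cor:QuanPsh}. For (i)$\Rightarrow$(ii) the paper packages the same computation slightly differently: instead of Taylor-expanding $|f|_h^2$ and integrating the quadratic term $Q_\xi$ against the explicit weights $d_k$, it uses continuity of the second derivatives to get the Griffiths inequality with slack $c-\varepsilon$ on a whole neighborhood $U$, observes that $(\varepsilon-c)|\xi|_h^2|z-x|^2-|f|_h^2$ is then psh on $U$, and applies the mean-value inequality together with \eqref{Eq:MeanInt}. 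This avoids the explicit diagonalization and the cubic remainder estimate, but the content is the same.
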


\begin{remark} \label{Rmk:Grif}
The proof of the above theorem (with $c=0$) was written down by the second author during the preparation of \cite{XuZhou22}. Indeed, the same argument of (i) $\Rightarrow$ (ii) has been used in \cite[Theorem 5.9]{XuZhou22}. As one can observe, the proof for general $c\in\mathbb{R}$ is verbatim. Inayama's article \cite[v1]{Inayama} was submitted to arXiv on 16 Oct 2022. His result corresponds to the case of $n=1$ and $c\geqslant0$. As a comment on his article, the second author sent Inayama an email containing Theorem \ref{Thm:QuanCharGrif} on 20 Oct 2022. He recognized our proof, and the first version of the present article was subsequently submitted to arXiv a few days later. Inayama's proof for (ii) $\Rightarrow$ (i) combined some ideas from \cite[Theorem 6.4]{DWZZ18} and \cite[Theorem 1.6]{DNW21} with delicate calculations. However, our proof is a slight modification of \cite[Theorem 1.3]{DNWZ22}, and it appears to be simpler. Of course, the ideas behind \cite{DNW21,DNWZ22} eventually go back to Guan-Zhou's \cite{GuanZhou15} approach to the log-psh variation of relative Bergman kernels.
\end{remark}

\begin{proof}
\textit{(i) implies (ii).} We choose a holomorphic frame $(e_\alpha)_{\alpha=1}^r$ of $E$ in a neighborhood of $x$ such that $h_{\alpha\overline{\beta}}(x) = \delta_{\alpha\beta}$ and $dh_{\alpha\overline{\beta}}(x)=0$, where $h_{\alpha\overline{\beta}} := \langle{e_\alpha,e_\beta}\rangle_h$. Then $i\Theta_h \geqslant_\textup{Grif} c\omega\otimes \textup{Id}_E$ at $x$ means that
\begin{equation*}
	\sum_{i,j,\alpha,\beta} R_{i\overline{j}\alpha\overline{\beta}}(x) a_i\overline{a_j} \xi_\alpha\overline{\xi_\beta} = 
	\sum_{i,j,\alpha,\beta} -\frac{\partial^2h_{\alpha\overline{\beta}}}{\partial z_i\partial\overline{z}_j}(x) a_i\overline{a_j} \xi_\alpha\overline{\xi_\beta} \geqslant c|a|^2|\xi|^2
\end{equation*}
for any $a=(a_1,\ldots,a_n)\in\mathbb{C}^n$ and $\xi=(\xi_1,\cdots,\xi_r)\in\mathbb{C}^r$. By continuity, for every $\varepsilon>0$, we can find a sufficiently small neighborhood $U\subset\Omega$ of $x$ so that
\begin{equation*}
	\sum_{i,j,\alpha,\beta} -\frac{\partial^2h_{\alpha\overline{\beta}}}{\partial z_i\partial\overline{z}_j}(z) a_i\overline{a_j}\xi_\alpha\overline{\xi_\beta} \geqslant (c-\varepsilon)|a|^2|\xi|^2
\end{equation*}
for any $z\in U$, $a\in\mathbb{C}^n$ and $\xi\in\mathbb{C}^r$. Clearly, we can choose a constant $\delta>0$ such that $x+P\subset U$ for any holomorphic cylinder $P$ with $\mathfrak{d}(P)<\delta$.

Given a $\xi=\sum_\alpha\xi_\alpha e_\alpha(x)\in E_x$ and a holomorphic cylinder $P$ with $\mathfrak{d}(P)<\delta$, we define an $f\in\Gamma(x+P,E)$ by $f(z)=\sum_\alpha \xi_\alpha e_\alpha(z)$. Then $f(x)=\xi$ and
\begin{equation*}
	\frac{\partial^2}{\partial z_i\partial\overline{z}_j} \left((\varepsilon-c)|\xi|_h^2\cdot|z-x|^2-|f|_h^2\right) = \sum_{\alpha,\beta} \left((\varepsilon-c)\delta_{ij}\delta_{\alpha\beta} -\frac{\partial^2 h_{\alpha\overline{\beta}}}{\partial z_i\partial\overline{z}_j}\right)\xi_\alpha\overline{\xi_\beta}.
\end{equation*}
By the choice of $U$, $i\partial\bar{\partial}((\varepsilon-c)|\xi|_h^2\cdot|z-x|^2-|f|_h^2)\geqslant 0$ on $x+P$, then
$$ (\varepsilon-c)|\xi|_h^2\cdot|z-x|^2-|f|_h^2 $$
is a psh function on $x+P$. Therefore,
\begin{equation*}
	-|f(x)|_h^2 \leqslant \frac{1}{\textup{Vol}(P)}\int_{x+P} \left((\varepsilon-c)|\xi|_h^2\cdot|z-x|^2-|f|_h^2\right) d\lambda_z.
\end{equation*}
Using equation (\ref{Eq:MeanInt}), the above inequality can be reformulated as
\begin{equation*}
	\frac{1}{\textup{Vol}(P)}\int_{x+P}|f|_h^2d\lambda \leqslant (1-(c-\varepsilon)\mathfrak{d}(P)^2)|\xi|_h^2.
\end{equation*}

\textit{(ii) implies (i).} It is sufficient to show that $i\Theta_{h^*} \leqslant_\textup{Grif} -c\omega\otimes\textup{Id}_{E^*}$ at $x$. In view of Lemma \ref{Lemma:QuanGrif}, for any local holomorphic section $u\in\Gamma(U,E^*)$ with $u_x:=u(x)\neq 0$, we need to check that $i\partial\bar{\partial}(\log|u|_{h^*}^2)\geqslant c\omega$ at $x$.

We choose a vector $\xi\in E_x$ with $|\xi|_h=1$ and $|u_x|_{h^*}=|u_x(\xi)|$. Given $\varepsilon>0$, let $\delta>0$ be the same as in (ii). For any holomorphic cylinder $x+P\subset U$ with $\mathfrak{d}(P)<\delta$, there exists an $f\in\Gamma(x+P,E)$ such that $f_x=\xi$ and
\begin{equation*}
\frac{1}{\textup{Vol}(P)}\int_{x+P}|f|_h^2d\lambda \leqslant 1-(c-\varepsilon)\mathfrak{d}(P)^2.
\end{equation*}
Notice that $z\mapsto u_z(f_z)$ is a holomorphic function on $x+P$. Whenever $u_z(f_z)\neq 0$, we have
\begin{equation*}
\log|u_z|_{h^*}^2 \geqslant \log|u_z(f_z)|^2 - \log|f_z|_h^2.
\end{equation*}
Since the zero set of $u_z(f_z)$ is a set of zero measure, we have
\begin{align*}
&\, \frac{1}{\textup{Vol}(P)} \int_{x+P} \left(\log|u_z|_{h^*}^2-(c-\varepsilon)|z-x|^2\right) d\lambda_z \\
\geqslant &\, \frac{1}{\textup{Vol}(P)} \int_{x+P} \log|u_z(f_z)|^2 d\lambda_z - \frac{1}{\textup{Vol}(P)} \int_{x+P} \log|f_z|_h^2 d\lambda_z \\
& \qquad\qquad - \frac{1}{\textup{Vol}(P)} \int_{x+P}(c-\varepsilon)|z-x|^2d\lambda_z \\
\geqslant &\, \log|u_x(\xi)|^2 - \log\left( \frac{1}{\textup{Vol}(P)}\int_{x+P}|f_z|_h^2d\lambda_z \right) -(c-\varepsilon)\mathfrak{d}(P)^2 \\
\geqslant &\, \log|u_x|_{h^*}^2 - \log\left(1-(c-\varepsilon)\mathfrak{d}(P)^2\right) - (c-\varepsilon)\mathfrak{d}(P)^2 \\
\geqslant &\, \log|u_x|_{h^*}^2,
\end{align*}
where the second inequality follows from (\ref{Eq:MeanValIneq}), the Jensen's inequality and (\ref{Eq:MeanInt}), the last inequality follows from the fact that $\log(1+t)\leqslant t$.

By Corollary \ref{Cor:QuanPsh}, we know that $i\partial\bar{\partial}(\log|u|_{h^*}^2)\geqslant (c-\varepsilon)\omega$ at $x$. Since $\varepsilon>0$ is arbitrary, we conclude that $i\partial\bar{\partial}(\log|u|_{h^*}^2)\geqslant c\omega$ at $x$.
\end{proof}

\begin{remark}
The condition (ii) with $c=0$, which is equivalent to the Griffiths semi-positivity at the given point, appears to be weaker than the optimal $L^2$-extension condition given in Definition \ref{Def:OptLpExt}. On the other hand, if $\dim\Omega=1$ or $\operatorname{rank}E=1$, Griffiths semi-positivity is equivalent to Nakano semi-positivity, and hence implies the optimal $L^2$-extension condition. In general cases, it is still unclear whether Griffiths semi-positivity is equivalent to the optimal $L^2$-extension condition.
\end{remark}

\begin{corollary} \label{Cor:QuanCharGrif}
Let $(E,h)$ be a holomorphic vector bundle over a domain $\Omega\subset\mathbb{C}^n$, equipped with a smooth Hermitian metric. If there is a constant $c\in\mathbb{R}$ and a neighborhood $U$ of $x\in\Omega$ such that for any $\xi\in E_x$ and any holomorphic cylinder $x+P\subset U$, there exists an $f\in\Gamma(x+P,E)$ satisfying $f(x)=\xi$ and
\begin{equation}
	\frac{1}{\textup{Vol}(P)}\int_{x+P}|f|_h^2d\lambda \leqslant (1-c\,\mathfrak{d}(P)^2)|\xi|_h^2,
\end{equation}
then $i\Theta_h \geqslant_\textup{Grif} c\omega\otimes\textup{Id}_E$ at $x$.
\end{corollary}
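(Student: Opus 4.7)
The plan is to reduce this corollary directly to the implication (ii) $\Rightarrow$ (i) of Theorem \ref{Thm:QuanCharGrif}. Observe that the hypothesis here is strictly stronger than condition (ii) of that theorem: on a fixed neighbourhood $U$ of $x$ one is given an extension with the sharp bound $(1-c\mathfrak{d}(P)^2)|\xi|_h^2$, with no $\varepsilon$-loss. For any $\varepsilon>0$, I would simply choose $\delta>0$ small enough that every cylinder $x+P$ with $\mathfrak{d}(P)<\delta$ sits inside $U$; the section $f$ supplied by the hypothesis then satisfies
$$\frac{1}{\textup{Vol}(P)}\int_{x+P}|f|_h^2d\lambda \leqslant (1-c\mathfrak{d}(P)^2)|\xi|_h^2 \leqslant \bigl(1-(c-\varepsilon)\mathfrak{d}(P)^2\bigr)|\xi|_h^2,$$
which is exactly condition (ii) of Theorem \ref{Thm:QuanCharGrif}, and the theorem delivers $i\Theta_h\geqslant_\textup{Grif} c\omega\otimes\textup{Id}_E$ at $x$.

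If a self-contained argument is preferred, the proof of (ii) $\Rightarrow$ (i) in Theorem \ref{Thm:QuanCharGrif} can be rerun with $c$ in place of $c-\varepsilon$ throughout. The key step would be, for an arbitrary local holomorphic section $u$ of $E^*$ with $u_x\neq 0$, to choose $\xi\in E_x$ with $|\xi|_h=1$ and $|u_x(\xi)|=|u_x|_{h^*}$, apply the hypothesized extension $f$, and combine Jensen's inequality, the identity (\ref{Eq:MeanInt}) and the elementary bound $\log(1+t)\leqslant t$ to obtain
$$\frac{1}{\textup{Vol}(P)}\int_{x+P}\bigl(\log|u_z|_{h^*}^2 - c|z-x|^2\bigr)d\lambda_z \geqslant \log|u_x|_{h^*}^2$$
for all sufficiently small cylinders $x+P\subset U$. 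Corollary \ref{Cor:QuanPsh} then yields $i\partial\bar{\partial}\log|u|_{h^*}^2\geqslant c\omega$ at $x$, and Lemma \ref{Lemma:QuanGrif} converts this into the desired curvature bound on $i\Theta_h$.

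I do not anticipate any genuine obstacle: the corollary is strictly weaker than the theorem whose proof it reuses. The only detail requiring attention is ensuring $1-c\mathfrak{d}(P)^2>0$ so that the logarithm in the direct approach is defined; this is automatic when $c\leqslant 0$ and is arranged by shrinking the admissible $\delta$ when $c>0$.
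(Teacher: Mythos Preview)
Your proposal is correct and matches the paper's treatment: the corollary is stated immediately after Theorem \ref{Thm:QuanCharGrif} with no separate proof, being an obvious consequence of the implication (ii) $\Rightarrow$ (i) there, exactly as you describe. Your self-contained variant is also just the paper's proof of (ii) $\Rightarrow$ (i) specialized with $c$ in place of $c-\varepsilon$, so there is nothing to add.
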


Corollary \ref{Cor:QuanCharGrif} with $c=0$ is precisely a local version of Theorem \ref{Thm:DNWZ}.(2).

Let us discuss the relations between our results and that of Inayama \cite{Inayama}. In our notations, Theorem 1.4 of \cite{Inayama} can be reformulated as follows:

\begin{theorem}[see \cite{Inayama}] \label{Thm:Inayama}
Let $(E,h)$ be a Hermitian holomorphic vector bundle over a domain $\Omega\subset\mathbb{C}$. Given $x\in\Omega$, assume that there exists a $\gamma_x\in(0,\textup{dist}(x,\partial\Omega))$ and a lower semi-continuous function $g_x:[0,\gamma_x]\to\mathbb{R}_{\geqslant0}$ such that for any $\xi\in E_x\setminus\{0\}$ and $r\in(0,\gamma_x)$, there exists an $f\in\Gamma(x+\mathbb{D}_r,E)$ satisfying $f(x)=\xi$ and
\begin{equation}
	\frac{1}{\textup{Vol}(\mathbb{D}_r)} \int_{x+\mathbb{D}_r}|f|_h^2d\lambda \leqslant e^{-g_x(r)r^2}|\xi|_h^2,
\end{equation}
then $i\Theta_h \geqslant_\textup{Grif} 2g_x(0)\omega\otimes\textup{Id}_E$ at $x$.
\end{theorem}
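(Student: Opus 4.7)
The plan is to mimic the (ii) $\Rightarrow$ (i) half of Theorem \ref{Thm:QuanCharGrif}, specialized to one variable and refined to pick up the exponential asymptotic profile $e^{-g_x(r)r^2}$ rather than the linear one $1-(c-\varepsilon)\mathfrak{d}(P)^2$. By Lemma \ref{Lemma:QuanGrif} it is enough to establish
$$ i\partial\bar\partial \log|u|_{h^*}^2 \;\geq\; 2g_x(0)\,\omega \quad \text{at } x $$
for every local holomorphic section $u$ of $E^*$ with $u(x)\neq 0$, since this is equivalent to $i\Theta_{h^*} \leq_\text{Grif} -2g_x(0)\omega\otimes\textup{Id}_{E^*}$ at $x$, and hence to the desired Griffiths bound for $h$.

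To carry this out, I would fix $u$ as above, pick $\xi \in E_x$ with $|\xi|_h = 1$ and $|u_x|_{h^*} = |u_x(\xi)|$, and for each $r \in (0,\gamma_x)$ invoke the hypothesis to obtain $f_r \in \Gamma(x+\mathbb{D}_r, E)$ satisfying $f_r(x)=\xi$ and
$$ \frac{1}{\pi r^2}\int_{x+\mathbb{D}_r}|f_r|_h^2\,d\lambda \;\leq\; e^{-g_x(r)r^2}. $$
Since $z\mapsto u_z(f_r(z))$ is holomorphic and nonzero at $x$, its vanishing locus has measure zero; off this locus $\log|u_z|_{h^*}^2 \geq \log|u_z(f_r(z))|^2 - \log|f_r(z)|_h^2$. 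Averaging over $x+\mathbb{D}_r$ and combining the sub-mean-value inequality for $\log|u(f_r)|^2$ with Jensen's inequality applied to the concave function $\log$ yields
$$ \frac{1}{\pi r^2}\int_{x+\mathbb{D}_r}\log|u_z|_{h^*}^2\,d\lambda_z \;\geq\; \log|u_x|_{h^*}^2 + g_x(r)\,r^2. $$

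Write $\psi := \log|u|_{h^*}^2$, which is smooth near $x$. Taylor-expanding $\psi$ at $x$ and using $\int_{\mathbb{D}_r}w\,d\lambda_w = \int_{\mathbb{D}_r}w^2\,d\lambda_w = 0$ together with $\frac{1}{\pi r^2}\int_{\mathbb{D}_r}|w|^2\,d\lambda_w = \tfrac{r^2}{2}$ (the $n=1$ case of \eqref{Eq:MeanInt}) gives
$$ \frac{1}{\pi r^2}\int_{x+\mathbb{D}_r}\psi\,d\lambda \;=\; \psi(x) + \frac{r^2}{2}\psi_{z\bar z}(x) + O(r^3). $$
Substituting into the previous display, dividing by $r^2$, and taking $\liminf_{r\to 0^+}$ produces $\tfrac{1}{2}\psi_{z\bar z}(x) \geq \liminf_{r\to 0^+}g_x(r) \geq g_x(0)$, where the second inequality uses the lower semi-continuity of $g_x$ at $0$. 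This is exactly the desired bound $i\partial\bar\partial\psi(x) \geq 2g_x(0)\omega$.

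The only real subtleties are two: first, making rigorous the mean-value/Jensen step in the presence of the (measure-zero) vanishing set of $u(f_r)$, and second, the passage to the limit, which crucially depends on the lower semi-continuity assumption on $g_x$ at $0$ — without it the inequality $\liminf_{r\to 0^+}g_x(r)\geq g_x(0)$ fails and one obtains a weaker estimate. Neither difficulty is serious; once these are dispatched the argument is a verbatim translation of the Theorem \ref{Thm:QuanCharGrif} proof with the quantitative profile replaced in the obvious way.
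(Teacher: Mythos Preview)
Your proof is correct. It differs slightly from the paper's own derivation of Theorem~\ref{Thm:Inayama}, though both rest on the same ideas.

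The paper does not re-run the mean-value/Jensen argument. Instead it first uses the lower semi-continuity of $g_x$ at $0$ to get $g_x(r)\geqslant\max\{g_x(0)-\varepsilon,0\}$ for $r\in[0,r_\varepsilon]$, then the elementary inequality $e^{-t}\leqslant 1-(1-\varepsilon)t$ for small $t>0$ to convert the hypothesis into the linear form
\[
\frac{1}{\textup{Vol}(\mathbb{D}_r)}\int_{x+\mathbb{D}_r}|f|_h^2\,d\lambda \leqslant \big(1-(1-\varepsilon)\max\{g_x(0)-\varepsilon,0\}\,r^2\big)|\xi|_h^2,
\]
which is exactly the input of Corollary~\ref{Cor:QuanCharGrif} (recall $\mathfrak{d}(\mathbb{D}_r)^2=\tfrac12 r^2$). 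Applying that corollary and letting $\varepsilon\to0$ yields the conclusion. So the paper's route is modular: reduce the exponential profile to the linear one and invoke the already-proved black box.

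Your route is a direct re-derivation: you keep the exponential bound, feed it through the same $\log|u|_{h^*}^2\geqslant\log|u(f)|^2-\log|f|_h^2$ inequality and Jensen, and then Taylor-expand $\psi=\log|u|_{h^*}^2$ (which is what Corollary~\ref{Cor:QuanPsh} amounts to in dimension~$1$) to pass to the limit. This avoids the $\varepsilon$-bookkeeping and the $e^{-t}\leqslant1-(1-\varepsilon)t$ conversion, at the cost of redoing inline what Corollary~\ref{Cor:QuanCharGrif} already packages. Both arguments use the lower semi-continuity of $g_x$ at exactly the same place: to pass from $g_x(r)$ to $g_x(0)$ in the limit $r\to0^+$.
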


In \cite{Inayama}, the infimum of $\int_{x+\mathbb{D}_r}|f|_h^2d\lambda / (\pi r^2|\xi|_h^2)$ over all holomorphic section $f\in\Gamma(x+\mathbb{D}_r,E)$ satisfying $f(x)=\xi$ is denoted by $L_h(x,r,\xi)$ and called the \textit{$L^2$-extension index} of $h$.
Notice that, in dimension 1, holomorphic cylinders are Euclidean discs and $\mathfrak{d}(\mathbb{D}_r)^2=\frac{1}{2}r^2$ by our definition \eqref{Eq:Diam}.

Now we assume all the conditions of Theorem \ref{Thm:Inayama}. For any fixed $0<\varepsilon\ll1$, by the semi-continuity, there exists a $r_\varepsilon\in(0,\gamma_x)$ so that $g_x(\cdot) \geqslant \max\{g_x(0)-\varepsilon,0\}$ on $[0,r_\varepsilon]$.
Notice that $e^{-t}\leqslant1-(1-\varepsilon)t$ for all $0<t\ll1$. By assumptions, for any $\xi\in E_x\setminus\{0\}$ and $0<r\ll r_\varepsilon$, there is an $f\in\Gamma(x+\mathbb{D}_r,E)$ such that $f(x)=\xi$ and
\begin{align*}
	\frac{1}{\textup{Vol}(\mathbb{D}_r)} \int_{x+\mathbb{D}_r}|f|_h^2d\lambda & \leqslant e^{-g_x(r)r^2}|\xi|_h^2 \leqslant e^{-\max\{g_x(0)-\varepsilon,0\}r^2} |\xi|_h^2 \\
	& \leqslant \big(1-(1-\varepsilon)\max\{g_x(0)-\varepsilon,0\}r^2\big) |\xi|_h^2.
\end{align*}
By Corollary \ref{Cor:QuanCharGrif}, we know
\begin{equation*}
	i\Theta_h \geqslant_\textup{Grif} 2(1-\varepsilon)\max\{g_x(0)-\varepsilon,0\} \omega\otimes\textup{Id}_E \text{ at }x.
\end{equation*}
Let $\varepsilon\to0$, we conclude that $i\Theta_h \geqslant_\textup{Grif} 2g_x(0) \omega\otimes\textup{Id}_E$ at $x$.

Conversely, we assume all the conditions of Corollary \ref{Cor:QuanCharGrif}. For any $c\in\mathbb{R}_{\geqslant0}$ and $0<r\ll1$, we have
\begin{equation*}
	1-c\,\mathfrak{d}(\mathbb{D}_r)^2 = 1-\frac{c}{2}r^2 = e^{-g_c(r)r^2}, \quad \text{where } g_c(r):=\frac{-\log(1-\frac{c}{2}r^2)}{r^2}.
\end{equation*}
Since $\lim_{r\to0}g_c(r)=\frac{c}{2}$, Theorem \ref{Thm:Inayama} yields $i\Theta_h \geqslant_\textup{Grif} c\omega\otimes\textup{Id}_E$ at $x$.

Therefore, Theorem \ref{Thm:Inayama} is equivalent to Corollary \ref{Cor:QuanCharGrif} with $n=1$ and $c\geqslant0$. There is a similar interpretation for Corollary 4.1 of \cite{Inayama}, which corresponds to Theorem \ref{Thm:QuanCharGrif} with $n=1$ and $c\geqslant0$.

\section{Characterizations of Pluriharmonicity} \label{Sec:PH}

Recall that, plurisubharmonicity is equivalent to the optimal $L^2$-extension condition and strictly psh implies sharper estimates in $L^2$ extensions.
In this section, we prove that pluriharmonic functions can be characterized by the equality part of the optimal $L^p$-extension condition. This answers a conjecture of Inayama \cite{Inayama} affirmatively.

\begin{theorem} \label{Thm:PluriHarm}
Let $\varphi:\Omega\to\mathbb{R}$ be a measurable function on a domain $\Omega\subset\mathbb{C}^n$ and $p>0$ be a constant, then the following conditions are equivalent:\par
\begin{itemize}
\item[(i)] $\varphi$ is pluriharmonic on $\Omega$;
\item[(ii)] for any holomorphic cylinder $x+P\subset\Omega$,
\begin{equation} \label{CondPH}
	\inf\left\{ \int_{x+P}|f|^pe^{-\varphi}d\lambda: f\in\mathcal{O}(x+P),f(x)=1 \right\} = \textup{Vol}(P)e^{-\varphi(x)}.
\end{equation}
\item[(iii)] there exists a positive continuous function $\gamma\ll1$ on $\Omega$ such that \eqref{CondPH} holds for any holomorphic cylinder $x+P\Subset\Omega$ with $\mathfrak{d}(P)<\gamma(x)$.
\end{itemize}
\end{theorem}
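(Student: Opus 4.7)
For (i) $\Rightarrow$ (ii), note that on the simply connected holomorphic cylinder $x+P$, the pluriharmonic $\varphi$ can be written as $2\operatorname{Re}(h)$ for some $h\in\mathcal{O}(x+P)$. The extension $f(z):=e^{2(h(z)-h(x))/p}$ satisfies $f(x)=1$ and $|f|^{p}e^{-\varphi}\equiv e^{-\varphi(x)}$, giving $\int_{x+P}|f|^{p}e^{-\varphi}d\lambda=\textup{Vol}(P)e^{-\varphi(x)}$. Conversely, for every $f\in\mathcal{O}(x+P)$ with $f(x)=1$ the function $|f|^{p}e^{-\varphi}=|f e^{-2h/p}|^{p}$ is psh, so the mean value inequality yields the reverse bound, proving the equality in (ii). The implication (ii) $\Rightarrow$ (iii) is immediate by taking $\gamma(x):=\tfrac{1}{2}\textup{dist}(x,\partial\Omega)$.

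For the main direction (iii) $\Rightarrow$ (i), the plan is to establish, in this order, that $\varphi$ is lsc, that $-\varphi$ is psh, that $\varphi$ is usc, and finally that $\varphi$ is psh; pluriharmonicity then follows from the last two. \emph{Lsc at $x$}: given $\epsilon>0$, pick $f\in\mathcal{O}(x+P)$ with $f(x)=1$ and $\int_{x+P}|f|^{p}e^{-\varphi}d\lambda<\textup{Vol}(P)e^{-\varphi(x)}+\epsilon$. For $y$ close to $x$, choose $P'$ slightly smaller than $P$ with $y+P'\subset x+P$ and $\mathfrak{d}(P')<\gamma(y)$, and set $g:=f/f(y)\in\mathcal{O}(y+P')$. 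Since $g(y)=1$, the $\geqslant$ half of (iii) at $(y,P')$ gives
\[
\textup{Vol}(P')e^{-\varphi(y)}\leqslant |f(y)|^{-p}\int_{y+P'}|f|^{p}e^{-\varphi}d\lambda\leqslant |f(y)|^{-p}\bigl(\textup{Vol}(P)e^{-\varphi(x)}+\epsilon\bigr).
\]
Since $f(y)\to 1$ and $\textup{Vol}(P')/\textup{Vol}(P)\to 1$ as $y\to x$, letting $\epsilon\to 0$ yields $\limsup_{y\to x}e^{-\varphi(y)}\leqslant e^{-\varphi(x)}$.

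Once $\varphi$ is lsc, testing (iii) with $f=e^{q-q(x)}$ for an arbitrary polynomial $q$ gives $\int_{x+P}e^{p\operatorname{Re}(q)-\varphi}d\lambda\geqslant\textup{Vol}(P)e^{p\operatorname{Re}(q(x))-\varphi(x)}$, so the usc function $e^{p\operatorname{Re}(q)-\varphi}$ satisfies the mean value inequality on small cylinders; by Lemma \ref{Lemma:CharPsh} it is psh for every polynomial $q$. The standard equivalence recalled in the proof of Proposition \ref{Prop:EquivGrif} (namely, $\log v$ is psh iff $v\cdot e^{p\operatorname{Re}(g)}$ is psh for every holomorphic polynomial $g$) then yields that $-\varphi=\log e^{-\varphi}$ is psh. \emph{For usc of $\varphi$}, suppose for contradiction that $\varphi(y_{k})>\varphi(x)+\delta$ for some $\delta>0$ and $y_{k}\to x$, and pick $f_{k}\in\mathcal{O}(y_{k}+P)$ with $f_{k}(y_{k})=1$ and $\int_{y_{k}+P}|f_{k}|^{p}e^{-\varphi}d\lambda<\textup{Vol}(P)e^{-\varphi(x)-\delta}+\epsilon$. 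Because $-\varphi$ is psh, Lemma \ref{Lemma:GZ} applied with weight $e^{-\varphi}$ provides a uniform bound for $\{f_{k}\}$ on compact subsets of any fixed $x+P''$ with $x+P''\subset y_{k}+P$ for all large $k$. Montel's theorem yields a subsequence $f_{k_{j}}\to f_{\infty}$ locally uniformly on $x+P''$, with $f_{\infty}(x)=\lim_{j}f_{k_{j}}(y_{k_{j}})=1$; Fatou's lemma then gives $\int_{x+P''}|f_{\infty}|^{p}e^{-\varphi}d\lambda\leqslant\textup{Vol}(P)e^{-\varphi(x)-\delta}+\epsilon$, while the $\geqslant$ half of (iii) at $(x,P'')$ gives $\int_{x+P''}|f_{\infty}|^{p}e^{-\varphi}d\lambda\geqslant\textup{Vol}(P'')e^{-\varphi(x)}$. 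Choosing $P''$ with $\textup{Vol}(P'')/\textup{Vol}(P)>e^{-\delta/2}$ and letting $\epsilon\to 0$ produces the contradiction $e^{-\delta/2}e^{-\varphi(x)}\leqslant e^{-\delta}e^{-\varphi(x)}$. Finally, usc of $\varphi$ combined with the $\leqslant$ half of (iii) (which is the optimal $L^{p}$-extension condition) and Theorem \ref{Thm:DNWZ}(1) forces $\varphi$ to be psh; with $-\varphi$ also psh, we conclude $i\partial\bar{\partial}\varphi=0$ on $\Omega$, i.e., $\varphi$ is pluriharmonic.

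The main obstacle is the usc step: since the infimum in (iii) is not a priori attained, the argument must pass to a limit through a near-minimizing sequence $\{f_{k}\}$, which requires a uniform $L^{\infty}$-bound supplied by Lemma \ref{Lemma:GZ}; the latter in turn needs $-\varphi$ to be psh, thereby dictating the precise order of the four sub-steps. The geometric adjustments (choosing $P'$ or $P''$ slightly smaller than $P$ with prescribed volume ratio, while respecting $\mathfrak{d}(P')<\gamma(y)$ or $\mathfrak{d}(P'')<\gamma(x)$) are handled routinely by continuity of $\gamma$.
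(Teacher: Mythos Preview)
Your proposal is correct and follows essentially the same route as the paper: establish lower semi-continuity of $\varphi$ from the $\geqslant$ half of \eqref{CondPH}, deduce that $-\varphi$ is psh via Inayama's trick, then prove upper semi-continuity by extracting a limit of near-minimizers using Lemma~\ref{Lemma:GZ} (which needs $-\varphi$ psh), and finally invoke Theorem~\ref{Thm:DNWZ}(1). The paper runs the usc step as a direct estimate rather than a contradiction, but the content is identical.

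One small point you gloss over: at the very end you say the $\leqslant$ half of (iii) ``is the optimal $L^p$-extension condition''. Strictly, Definition~\ref{Def:OptLpExt} asks for an $f$ \emph{achieving} the bound $\int|f|^pe^{-\varphi}\leqslant\textup{Vol}(P)e^{-\varphi(x)}$, not merely that the infimum is $\leqslant$. The paper closes this gap in one line by noting that, once $-\varphi$ is psh, Lemma~\ref{Lemma:GZ} and Montel produce a minimizer for each small cylinder; you should say this explicitly before citing Theorem~\ref{Thm:DNWZ}.
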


\begin{proof}
(i) $\Rightarrow$ (ii): Since $\varphi$ is pluriharmonic, there exists an $u\in\mathcal{O}(x+P)$ such that $p\operatorname{Re} u=\varphi$, and then $|e^u|^p=e^{\varphi}$. Clearly, $f:=e^{u-u(x)}$ is a holomorphic function on $x+P$ satisfying $f(x)=1$ and
\begin{equation*}
	\int_{x+P}|f|^pe^{-\varphi}d\lambda = \int_{x+P}|e^{-u(x)}|^pd\lambda = \textup{Vol}(P)e^{-\varphi(x)}.
\end{equation*}On the other hand, since $-\varphi$ is psh, for any $g\in\mathcal{O}(x+P)$, we know $|g|^pe^{-\varphi} = \exp(-\varphi+p\log|g|)$ is a psh function, then the mean value inequality says that
$$ \int_{x+P}|g|^pe^{-\varphi}d\lambda \geqslant \textup{Vol}(P)|g(x)|^pe^{-\varphi(x)}. $$
Therefore, \eqref{CondPH} holds for any holomorphic cylinder $x+P\subset\Omega$.

(ii) $\Rightarrow$ (iii): trivial.\\

(iii) $\Rightarrow$ (i): The proof is divided into two steps.

\textit{Step 1: $\varphi$ is lower semi-continuous and $-\varphi$ is psh.}

Given $x\in\Omega$, we choose a sequence $\{x_j\}_{j=1}^\infty$ such that $x_j\to x$ and
$$ \varliminf_{z\to x}\varphi(z) = \lim_{j\to+\infty} \varphi(x_j). $$
We choose a holomorphic cylinder $x+P\Subset\Omega$ with $\mathfrak{d}(P)<\gamma(x)$.

Let $\varepsilon>0$ and $0<s<1$ be fixed for the moment. By the condition \eqref{CondPH}, there exists an $f\in\mathcal{O}(x+P)$ such that $f(x)=1$ and
$$ \int_{x+P} |f|^pe^{-\varphi}d\lambda \leqslant \textup{Vol}(P)e^{-\varphi(x)+\varepsilon} < +\infty. $$
Since $\mathfrak{d}(sP)<\gamma(x_j)$ and $x_j+sP \Subset x+P$ for all $j\gg1$,
\begin{align*}
	s^{2n}\textup{Vol}(P)|f(x_j)|^pe^{-\varphi(x_j)} & \leqslant \int_{x_j+sP}|f|^pe^{-\varphi}d\lambda \\
	& \leqslant \int_{x+P}|f|^pe^{-\varphi}d\lambda \leqslant \textup{Vol}(P)e^{-\varphi(x)+\varepsilon}.
\end{align*}
Since $\lim_{j\to+\infty}f(x_j)=1$, letting $j\to+\infty$, we obtain
\begin{equation*}
	s^{2n} e^{-\lim_{j\to+\infty}\varphi(x_j)} \leqslant e^{-\varphi(x)+\varepsilon}.
\end{equation*}
Let $\varepsilon\searrow0$ and $s\nearrow1$, we conclude that
\begin{equation*}
	\varliminf_{z\to x}\varphi(z) = \lim_{j\to+\infty}\varphi(x_j) \geqslant \varphi(x).
\end{equation*}
Therefore, $\varphi$ is lower semi-continuous. Following the same idea of Inayama \cite{InayamaNote}, we can show that $-\varphi$ is psh:

Given a local holomorphic function $u$ on $U\subset\Omega$, the condition \eqref{CondPH} yields that $\int_{x+P}|u|^pe^{-\varphi}d\lambda \geqslant \textup{Vol}(P)|u(x)|^pe^{-\varphi(x)}$ for any holomorphic cylinder $x+P\Subset U$ with $\mathfrak{d}(P)<\gamma(x)$. Since $-\varphi$ is upper semi-continuous, it follows that $|u|^pe^{-\varphi}$ is a psh function on $U$. By Proposition \ref{Prop:EquivGrif}, the singular Hermitian metric $h:=e^{-\varphi}$ on the trivial line bundle is Griffiths semi-negative. Consequently, $-\varphi$ is psh.\\

\textit{Step 2: $\varphi$ is upper semi-continuous and psh.}

Given $x\in\Omega$, we choose a sequence $\{x_j\}_{j=1}^\infty$ such that $x_j\to x$ and
$$ \varlimsup_{z\to x}\varphi(z) = \lim_{j\to+\infty} \varphi(x_j). $$
Since $\varphi$ is lower semi-continuous, there is a constant $C$ such that $\varphi(x_j)\geqslant-C$ for all $j$. We choose a holomorphic cylinder $x+P\Subset\Omega$ with $\mathfrak{d}(P)<\gamma(x)$.

Let $\varepsilon>0$ and $0<s<1$ be fixed for the moment. Clearly, $\mathfrak{d}(P)<\gamma(x_j)$, $x_j+P\Subset\Omega$ and  $x+sP\Subset x_j+P$ for all $j\gg1$. By the condition \eqref{CondPH}, for each $j\gg1$, there exists an $f_j\in\mathcal{O}(x_j+P)$ such that $f_j(x_j)=1$ and
$$ \int_{x_j+P}|f_j|^pe^{-\varphi}d\lambda \leqslant \textup{Vol}(P)e^{-\varphi(x_j)+\varepsilon} < +\infty. $$
Since $\int_{x+sP}|f_j|^pe^{-\varphi}d\lambda \leqslant \textup{Vol}(P)e^{C+\varepsilon}$, by Lemma \ref{Lemma:GZ}, $\{f_j\}$ is uniformly bounded on any compact subset of $x+sP$. By Montel's theorem, there exists a subsequence $f_{j_k}|_{x+sP}$ that converges uniformly on any compact subset of $x+sP$ to some $f\in\mathcal{O}(x+sP)$. By Fatou's lemma,
\begin{align*}
	\int_{x+sP}|f|^pe^{-\varphi}d\lambda & = \int_{x+sP}\lim_{k\to+\infty}|f_{j_k}|^pe^{-\varphi}d\lambda \\
	& \leqslant \varliminf_{k\to+\infty} \int_{x+sP}|f_{j_k}|^pe^{-\varphi}d\lambda \leqslant \varliminf_{k\to+\infty} \textup{Vol}(P)e^{-\varphi(x_{j_k})+\varepsilon}.
\end{align*}
Since $f_{j_k}$ converges compactly to $f$ and $x_j\to x$, it is clear that
$$ f(x) = \lim_{k\to+\infty}f_{j_k}(x_{j_k}) = 1. $$
By the condition \eqref{CondPH},
\begin{equation*}
	s^{2n}\textup{Vol}(P)e^{-\varphi(x)} \leqslant \int_{x+sP}|f|^pe^{-\varphi}d\lambda \leqslant \textup{Vol}(P) \varliminf_{k\to+\infty}e^{-\varphi(x_{j_k})+\varepsilon},
\end{equation*}
i.e.
$$ s^{2n}e^{-\varphi(x)} \leqslant e^{-\varlimsup_{j\to+\infty}\varphi(x_{j_k})+\varepsilon}. $$
Let $\varepsilon\searrow0$ and $s\nearrow1$, we conclude that
$$ \varliminf_{z\to x}\varphi(z) = \lim_{k\to+\infty}\varphi(x_{j_k}) \leqslant \varphi(x). $$
Therefore, $\varphi$ is also upper semi-continuous.

Since $\varphi$ is upper semi-continuous, by \eqref{CondPH} and Montel's theorem, for any holomorphic cylinder $x+P\Subset\Omega$ with $\mathfrak{d}(P)<\gamma(x)$, there exists an $f\in\mathcal{O}(x+P)$ such that $f(x)=1$ and $\int_{x+P}|f|^pe^{-\varphi}d\lambda = \textup{Vol}(P)e^{-\varphi(x)}$. Therefore, $\varphi$ satisfies the optimal $L^p$-extension condition. By Theorem \ref{Thm:DNWZ}, $\varphi$ is also a psh function.

Since $\pm\varphi$ are psh, it follows that $\varphi$ is smooth and pluriharmonic.
\end{proof}

\begin{remark} \label{Rmk:NoInf}
In Theorem \ref{Thm:PluriHarm}, the assumption that $\varphi$ is $\mathbb{R}$-valued is necessary.

(1) We consider $p=2$ and $\varphi:=\log|z|^2$ on $\mathbb{C}$. Since $\varphi$ is harmonic on $\mathbb{C}\setminus\{0\}$, \eqref{CondPH} holds for any disc not containing $0$. Assume that $x+\mathbb{D}_r$ is a disc containing $0$ and $x\neq0$. Since $\varphi$ is psh, by the optimal $L^2$ extension theorem, there exists an $f\in\mathcal{O}(x+\mathbb{D}_r)$ such that $f(x)=1$ and $\int_{x+\mathbb{D}_r}|f|^2e^{-\varphi}d\lambda \leqslant \pi r^2e^{-\varphi(x)}$.
On the other hand, let $g\in\mathcal{O}(x+\mathbb{D}_r)$ be any holomorphic function satisfying $g(x)=1$ and $\int_{x+\mathbb{D}_r}|g|^2e^{-\varphi}d\lambda <+\infty$. Since $e^{-\varphi}$ is not integrable at $0$, we know $g(0)=0$. Then $g(z)/z$ is holomorphic function on $x+\mathbb{D}_r$ and
\begin{equation*}
	\int_{x+\mathbb{D}_r}|g|^2e^{-\varphi}d\lambda = \int_{x+\mathbb{D}_r}\left|\tfrac{g(z)}{z}\right|^2d\lambda_z \geqslant \pi r^2 \left|\tfrac{g(x)}{x}\right|^2 = \pi r^2 e^{-\varphi(x)}.
\end{equation*}
Therefore, \eqref{CondPH} also holds for $x+\mathbb{D}_r$. Finally, since $e^{-\varphi}$ is not integrable at $0$, for any $0+\mathbb{D}_r$, both sides of \eqref{CondPH} are $+\infty$.

In summary, $\varphi=\log|z|^2$ on $\mathbb{C}$ satisfies the condition (ii) with $p=2$. However, $\varphi(0)=-\infty$ and $\varphi$ is not harmonic at $0$.

(2) Assume that $\varphi:\Omega\to\mathbb{R}\cup\{+\infty\}$ is a measurable function satisfying the condition (iii).
If $\varphi\neq+\infty$ everywhere, then $\varphi$ is pluriharmonic. We claim that \textit{if $\varphi(x_0)=+\infty$ for some $x_0\in\Omega$, then $\varphi$ is identically $+\infty$ on $\Omega$.}

Since $\varphi\neq-\infty$, the same argument of Step 1 shows that $-\varphi$ is a psh function. Suppose $\varphi(x)=+\infty$ and we choose a holomorphic cylinder $x+P\Subset\Omega$ with $\mathfrak{d}(P)<\gamma(x)$.
Since $e^{-\varphi(x)}=0$, the equation \eqref{CondPH} means that there exists a sequence $\{f_j\}_{j=1}^\infty$ of holomorphic functions on $x+P$ such that $f_j(x)=1$ and $\int_{x+P}|f_j|^pe^{-\varphi}d\lambda \leqslant j^{-1}$.
If $\varphi\not\equiv+\infty$ on $x+P$, then it follows from Lemma \ref{Lemma:GZ} that $\{f_j\}_{j=1}^\infty$ is uniformly bounded on any compact subset of $x+P$.
By Montel's theorem, there exists a subsequence $\{f_{j_k}\}_{k=1}^\infty$ that converges compactly to some $f\in\mathcal{O}(x+P)$. Clearly, $f(x)=1$. By Fatou's Lemma,
\begin{equation*}
	\int_{x+P}|f|^pe^{-\varphi}d\lambda = \int_{x+P}\lim_{k\to+\infty}|f_{j_k}|^pe^{-\varphi}d\lambda \leqslant \varliminf_{k\to+\infty}\int_{x+P}|f_{j_k}|^pe^{-\varphi}d\lambda = 0.
\end{equation*}
Since $f\not\equiv0$, it follows that $\varphi=+\infty$ almost everywhere on $x+P$. Since $-\varphi$ is a psh function, it turns out that $\varphi\equiv+\infty$ on $x+P$. A contradiction!

Therefore, if $\varphi(x)=+\infty$, then $\varphi\equiv+\infty$ on any $x+P$ with $\mathfrak{d}(P)<\gamma(x)$. By the continuity of $\gamma>0$, it is easy to show that, if $\varphi(x_0)=+\infty$ for some $x_0\in\Omega$, then $\varphi$ is identically $+\infty$ on $\Omega$.
\end{remark}

In the case of $n=1$ and $p=2$, assuming the subharmonicity in advance, one can prove a stronger result: the equality in a single disc guarantees the harmonicity in that disc. This result is a consequence of Theorem 1.11 of Guan-Mi \cite{GuanMi22}. Since the origin proof is lengthy, we give a shorter proof by using Corollary 1.5 of \cite{GuanMi22}.

\begin{theorem}
Let $\varphi>-\infty$ be a subharmonic function on $\mathbb{D}$, then $\varphi$ is harmonic on $\mathbb{D}$ if and only if
$$ B_{\mathbb{D}}(0;e^{-\varphi}) = \pi^{-1}e^{\varphi(0)}. $$
\end{theorem}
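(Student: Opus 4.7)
I plan to prove both implications separately, since the ``if'' direction is essentially immediate from a Bergman-space isometry, while the nontrivial ``only if'' direction will be reduced to the rigidity statement in Corollary 1.5 of Guan-Mi \cite{GuanMi22}. For the ``if'' direction, suppose $\varphi$ is harmonic on $\mathbb{D}$. Then $\varphi = u + \bar u$ for some $u \in \mathcal{O}(\mathbb{D})$, and the map $g \mapsto g\,e^{-u}$ is an isometric isomorphism from the weighted Bergman space $A^2(\mathbb{D}, e^{-\varphi})$ onto the unweighted $A^2(\mathbb{D})$, since $|g\,e^{-u}|^2 = |g|^2 e^{-\varphi}$; under this isometry the reproducing kernel at $0$ transforms by the pointwise factor $|e^{-u(0)}|^2 = e^{-\varphi(0)}$, so $B_{\mathbb{D}}(0; e^{-\varphi})\,e^{-\varphi(0)} = B_{\mathbb{D}}(0;1) = \pi^{-1}$. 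Equivalently, the explicit candidate $F := e^{u-u(0)}$ has $F(0) = 1$ and $|F|^2 e^{-\varphi} \equiv e^{-\varphi(0)}$, giving $\int_{\mathbb{D}}|F|^2 e^{-\varphi}\,d\lambda = \pi e^{-\varphi(0)}$; this pairs with the lower bound $\pi B_{\mathbb{D}}(0; e^{-\varphi}) \geqslant e^{\varphi(0)}$ from the optimal $L^2$ extension theorem (Theorem \ref{Thm:OptLp}) to yield the claimed equality.

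For the ``only if'' direction, assume $\pi B_{\mathbb{D}}(0; e^{-\varphi}) = e^{\varphi(0)}$. I first produce a minimizer: by Theorem \ref{Thm:OptLp} the extremal problem defining $B_{\mathbb{D}}(0; e^{-\varphi})$ is nonempty, and applying Lemma \ref{Lemma:GZ} together with Montel's theorem to a minimizing sequence yields an $F \in \mathcal{O}(\mathbb{D})$ with $F(0) = 1$ and $\int_{\mathbb{D}}|F|^2 e^{-\varphi}\,d\lambda = \pi e^{-\varphi(0)}$. I then invoke Corollary 1.5 of \cite{GuanMi22}, which in this single-disc setting is precisely the rigidity statement that the equality case in the optimal $L^2$ extension inequality forces the pointwise identity $|F|^2 e^{-\varphi} = e^{-\varphi(0)}$ almost everywhere on $\mathbb{D}$; rearranging gives $\varphi = \varphi(0) + 2\log|F|$ a.e. Since $\varphi > -\infty$ everywhere, $F$ cannot vanish anywhere on $\mathbb{D}$, so $F = e^{h}$ for some $h \in \mathcal{O}(\mathbb{D})$ and $\log|F| = \operatorname{Re} h$ is harmonic. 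Finally, a subharmonic function that agrees almost everywhere with a continuous function must equal it pointwise, because a subharmonic function coincides with its own upper semi-continuous regularization $z \mapsto \limsup_{w \to z} \varphi(w)$; hence $\varphi = \varphi(0) + 2\log|F|$ everywhere on $\mathbb{D}$ and is therefore harmonic.

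The main obstacle is the extraction of the pointwise identity $|F|^2 e^{-\varphi} \equiv \textup{const}$ from the saturation of the Bergman kernel bound; this is precisely where Corollary 1.5 of \cite{GuanMi22} is used as a black box, and is what allows the proof to be much shorter than the alternative route through Theorem 1.11 of the same paper. Once that identity is in hand, the remaining steps reduce to elementary facts about zeros of nonvanishing holomorphic functions and the standard regularity of subharmonic functions, so no further analytic input is needed.
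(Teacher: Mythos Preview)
Your ``if'' direction is fine and matches the paper's treatment of the analogous implication elsewhere.

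The gap is in the ``only if'' direction: you attribute to Corollary~1.5 of \cite{GuanMi22} a statement it does not make. That corollary does not assert that the minimizer $F$ satisfies the pointwise identity $|F|^2 e^{-\varphi}\equiv e^{-\varphi(0)}$; its content is a characterization of the linearity of the minimal $L^2$ integral function $t\mapsto I_\varphi(e^t)$ in terms of the (non-)existence of a modified weight $\widetilde\varphi\geqslant\varphi$ with $\widetilde\varphi\equiv\varphi$ near the pole set and $\widetilde\varphi\not\equiv\varphi$ elsewhere. So invoking it as a black box for your pointwise rigidity leaves the central step unjustified. (The identity you want is in fact equivalent to the conclusion $\varphi$ harmonic, so you cannot expect to get it for free.)

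The paper's argument proceeds quite differently. Assuming $i\partial\bar\partial\varphi\not\equiv0$, it explicitly constructs such a $\widetilde\varphi$ by local potential theory: choose a small disc $\mathbb{D}(x;r)$ away from $0$ where the Laplacian is nontrivial, cut off $i\partial\bar\partial\varphi$ there, solve $i\partial\bar\partial u=\eta T$, and add back the Dirichlet-harmonic correction to obtain $\sigma\geqslant0$ supported in $\mathbb{D}(x;r)$; then $\widetilde\varphi=\varphi+\sigma$ is subharmonic, dominates $\varphi$, agrees with $\varphi$ outside a compact set not containing $0$, and is not identically $\varphi$. The hypothesis forces $I_\varphi(s)=\pi e^{-\varphi(0)}s$ to be linear in $s$, and then the concavity argument underlying Corollary~1.5 (comparing $I_{\widetilde\varphi}$ with $I_\varphi$ on three nested levels) yields a contradiction. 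The construction of $\widetilde\varphi$ is the missing idea in your proposal; without it, the appeal to \cite{GuanMi22} is not doing the work you claim.
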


\begin{proof}
Clearly, we only need to prove the sufficiency.
Suppose, on the contrary, $T:=i\partial\bar{\partial}\varphi\not\equiv0$ on $\mathbb{D}$. Then we can choose a small disc $\mathbb{D}(x;r)\Subset\mathbb{D}$ such that $0\notin\mathbb{D}(x;r)$ and $T\not\equiv0$ on $\mathbb{D}(x;\frac{r}{3})$.
(Otherwise, $\operatorname{supp} T\subset\{0\}$ and $T=c\delta_0$ for some $c\geqslant0$. Since $\varphi(0)>-\infty$, it turns out that $c=0$ and $T\equiv0$.)

Let $\eta$ be a smooth function such that $0\leqslant\eta\leqslant1$, $\operatorname{supp}\eta\subset\mathbb{D}(x;\frac{2r}{3})$ and $\eta\equiv1$ on $\mathbb{D}(x;\frac{r}{3})$. Then $\eta T\not\equiv0$ is a positive $(1,1)$-current on $\mathbb{D}$. Since we are working in dimension $1$, there exists a subharmonic function $u$ on $\mathbb{D}$ such that $i\partial\bar{\partial} u=\eta T$. Since $\operatorname{supp}\eta T\subset\mathbb{D}(x;\frac{2r}{3})$, we see that $u$ is harmonic on $\mathbb{D}\setminus\overline{\mathbb{D}(x;\frac{2r}{3})}$.

Solving the Dirichlet problem, we find a harmonic function $v$ on $\mathbb{D}(x;r)$ such that $v$ is continuous up to the boundary and $u=v$ on $\partial\mathbb{D}(x;r)$. By the maximum principle, $u\leqslant v$ on $\mathbb{D}(x;r)$. Since $\eta T\not\equiv0$ on $\mathbb{D}(x;r)$, we know $u\not\equiv v$. We define
\begin{equation*}
	\sigma(z) := \begin{cases} v(z) - u(z), & z\in\overline{\mathbb{D}(x;r)} \\ 0, & z\in\mathbb{D}\setminus\mathbb{D}(x;r) \end{cases}.
\end{equation*}
Since $v$ is harmonic, $i\partial\bar{\partial}\sigma = -i\partial\bar{\partial} u = -\eta T$ on $\mathbb{D}(x;r)$. Since $v-u$ is a nonnegative harmonic function on $\mathbb{D}(x;r)\setminus\overline{\mathbb{D}(x;\frac{2r}{3})}$ and $v-u=0$ on $\partial\mathbb{D}(x;r)$, it is easy to see that $\sigma$ is subharmonic on $\mathbb{D}\setminus\overline{\mathbb{D}(x;\frac{2r}{3})}$.

We set $\widetilde{\varphi}:=\varphi+\sigma$, then $i\partial\bar{\partial}\widetilde{\varphi}=(1-\eta)T\geqslant0$ on $\mathbb{D}(x;r)$ and $i\partial\bar{\partial}\widetilde{\varphi}\geqslant i\partial\bar{\partial}\varphi\geqslant0$ on $\mathbb{D}\setminus\overline{\mathbb{D}(x;\frac{2r}{3})}$.
After a modification on a set of zero measure, $\widetilde{\varphi}$ is also a subharmonic function on $\mathbb{D}$. By the construction, $\widetilde{\varphi}\geqslant\varphi$ on $\mathbb{D}$, $\widetilde{\varphi}\equiv\varphi$ on $\mathbb{D}\setminus\mathbb{D}(x;r)$ and $\widetilde{\varphi}\not\equiv\varphi$ on $\mathbb{D}(x;r)$.\\

\textit{Following the essential idea of \cite[Corollary 1.5]{GuanMi22}, the existence of such $\widetilde{\varphi}$ will lead to a contradiction.} For the convenience of readers, we recall the details.

We set $\psi:=2\log|z|$, then $e^{-\varphi-\psi}$ is locally integrable on $\mathbb{D}\setminus\{0\}$ and not integrable at $0$. For any $s\in(0,1]$, we define $U_s:=\{\psi<\log s\}=\mathbb{D}(0;\sqrt{s})$ and
\begin{equation*}
	I_\varphi(s) := \inf\left\{ \int_{U_s}|f|^2e^{-\varphi}d\lambda: f\in A^2(U_s;e^{-\varphi}), f(0)=1 \right\} = \frac{1}{B_{U_s}(0;e^{-\varphi})}.
\end{equation*}
We denote by $F_s\in A^2(U_s;e^{-\varphi})$ the unique holomorphic function such that $F_s(0)=1$ and $\int_{U_s}|F_s|^2e^{-\varphi}d\lambda = I_\varphi(s)$. We define $I_{\widetilde{\varphi}}(s)$ and $\tilde{F}_s$ in similar ways.

By the concavity of minimal $L^2$ integrals (see \cite{Guan19}), $I_\varphi(s)$ and $I_{\widetilde{\varphi}}(s)$ are concave increasing functions on $(0,1]$. By the optimal $L^2$ extension theorem (see \cite{GuanZhou15}),
$$ I_\varphi(s) \leqslant \pi e^{-\varphi(0)}s, \quad 0<s\leqslant1. $$
By assumption, $I_\varphi(1) = \pi e^{-\varphi(0)}$. Since $\lim_{s\to0}I_\varphi(s)=0$, the concavity implies that $I_\varphi(s) \equiv \pi e^{-\varphi(0)}s$. Therefore, $I_\varphi(s)$ is a linear function of $s\in(0,1]$, and then \cite[Remark 5.3]{XuZhou22} says that $F_s\equiv F_1|_{U_s}$ for all $0<s<1$.

Since $\varphi>-\infty$, we know $\mathcal{I}(\varphi)=\mathcal{O}_\mathbb{D}$. Since $\int_\mathbb{D} |\tilde{F}_1|^2e^{-\widetilde{\varphi}} d\lambda<+\infty$ and $\widetilde{\varphi}\equiv\varphi$ on $\mathbb{D}\setminus\mathbb{D}(x;r)$, it is clear that $\int_\mathbb{D} |\tilde{F}_1|^2e^{-\varphi} d\lambda<+\infty$, i.e. $\tilde{F}_1\in A^2(\mathbb{D};e^{-\varphi})$.
We choose $a\in(0,1)$ so that $\sqrt{a}>|x|+r$, then $\widetilde{\varphi} \equiv \varphi$ on $\mathbb{D}\setminus U_a$. Since $\int_{U_a} |\tilde{F}_1|^2 e^{-\widetilde{\varphi}} d\lambda \geqslant I_{\widetilde{\varphi}}(a)$, it is clear that
\begin{equation*}
	I_{\widetilde{\varphi}}(1) - I_{\widetilde{\varphi}}(a) \geqslant \int_{\mathbb{D}\setminus U_a} |\tilde{F}_1|^2 e^{-\widetilde{\varphi}} d\lambda = \int_{\mathbb{D}\setminus U_a} |\tilde{F}_1|^2 e^{-\varphi} d\lambda.
\end{equation*}
By the minimality of $F_s\in A^2(U_s;e^{-\varphi})$, we know
\begin{equation*}
	\int_{U_s} |\tilde{F}_1|^2 e^{-\varphi} d\lambda = \int_{U_s} |F_s|^2 e^{-\varphi} d\lambda + \int_{U_s} |\tilde{F}_1-F_s|^2 e^{-\varphi} d\lambda.
\end{equation*}
Since $F_s\equiv F_1|_{U_s}$, it follows that
\begin{align*}
	I_{\widetilde{\varphi}}(1) - I_{\widetilde{\varphi}}(a) & \geqslant \int_\mathbb{D} |F_1|^2 e^{-\varphi} d\lambda - \int_{U_a} |F_1|^2 e^{-\varphi} d\lambda + \int_{\mathbb{D}\setminus U_a} |\tilde{F}_1-F_1|^2 e^{-\varphi} d\lambda \\
	& \geqslant I_\varphi(1) - I_\varphi(a).
\end{align*}

We choose $b\in(0,1)$ such that $\sqrt{b}<|x|-r$. Since $\widetilde{\varphi}\equiv\varphi$ on $U_b$, we know $I_{\widetilde{\varphi}}(b) = I_\varphi(b)$.
Since $\widetilde{\varphi}\geqslant\varphi$ and  $\widetilde{\varphi} \not\equiv \varphi$ on $\mathbb{D}(x;r)\subset U_a$,
it is clear that $$U_a\cap\{\widetilde{\varphi}>\varphi\}\cap\{F_1\neq0\}$$ is a set of positive measure. Consequently,
\begin{equation*}
	I_{\widetilde{\varphi}}(a) \leqslant \int_{U_a} |F_1|^2e^{-\widetilde{\varphi}} d\lambda < \int_{U_a} |F_1|^2e^{-\varphi} d\lambda = I_\varphi(a).
\end{equation*}
Recall that, $I_\varphi(s)=\pi e^{-\varphi(0)}s$ for all $0<s\leqslant1$. Then
\begin{align*}
	\frac{I_{\widetilde{\varphi}}(a) - I_{\widetilde{\varphi}}(b)}{a-b} < \frac{I_{\varphi}(a) - I_{\varphi}(b)}{a-b} & = \pi e^{-\varphi(0)} \\ & = \frac{I_{\varphi}(1) - I_{\varphi}(a)}{1-a} \leqslant \frac{I_{\widetilde{\varphi}}(1) - I_{\widetilde{\varphi}}(a)}{1-a}.
\end{align*}
However, this contradicts with the concavity of $s\mapsto I_{\widetilde{\varphi}}(s)$.

Therefore, $i\partial\bar{\partial}\varphi\equiv0$, and $\varphi$ is a harmonic function on $\mathbb{D}$.
\end{proof}

\section{A Characterization of Flatness} \label{Sec:Flat}

In this section, we prove that a singular Hermitian metric is smooth and flat if and only if it satisfies the equality part of the optimal $L^p$-extension condition.

\begin{theorem} \label{Thm:Flat}
Let $E$ be a holomorphic vector bundle over a domain $\Omega\subset\mathbb{C}^n$ and $p>0$ be a constant. Let $h$ be a singular Hermitian metric on $E$ such that $0<\det h<+\infty$ everywhere, then the following conditions are equivalent:\par
\begin{itemize}
\item[(i)] $h$ is smooth and $\Theta_h\equiv0$;
\item[(ii)] for any holomorphic cylinder $x+P\subset\Omega$ and any $v\in E_x$,
\begin{equation}\label{CondFlat}
	\inf\left\{ \int_{x+P}|f|_h^pd\lambda: f\in\Gamma(x+P,E), f(x)=v \right\} = \textup{Vol}(P)|v|_h^p.
\end{equation}
\item[(iii)] there exists a positive continuous function $\gamma\ll1$ on $\Omega$ such that \eqref{CondFlat} holds for any holomorphic cylinder $x+P\Subset\Omega$ with $\mathfrak{d}(P)<\gamma(x)$ and any $v\in E_x$.
\end{itemize}
\end{theorem}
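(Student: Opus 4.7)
The plan is to establish the cycle (i) $\Rightarrow$ (ii) $\Rightarrow$ (iii) $\Rightarrow$ (i), with (iii) $\Rightarrow$ (i) carrying the bulk of the work.

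For (i) $\Rightarrow$ (ii), I would invoke Lemma \ref{Lemma:FlatUnitary} to obtain a unitary holomorphic frame $\tilde{e}_1, \ldots, \tilde{e}_r$ of $(E,h)$ on $x+P$. Writing $v = \sum v^\alpha \tilde{e}_\alpha(x)$ and taking $f(z) := \sum v^\alpha \tilde{e}_\alpha(z)$ produces an extension with $|f|_h \equiv |v|_h$, so $\int_{x+P}|f|_h^p d\lambda = \textup{Vol}(P)|v|_h^p$, realizing the infimum. The matching lower bound follows because a flat metric is Griffiths semi-negative, hence by Proposition \ref{Prop:EquivGrif} and Lemma \ref{Lemma:CharPsh}, every competitor $g$ satisfies $\int|g|_h^p d\lambda \geq \textup{Vol}(P)|g(x)|_h^p = \textup{Vol}(P)|v|_h^p$. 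The implication (ii) $\Rightarrow$ (iii) is trivial.

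For (iii) $\Rightarrow$ (i), I would follow the three-step strategy sketched in the introduction. Step 1 is the continuity of $h$ on $\Omega$: in a local holomorphic trivialization $E|_U \simeq U \times \mathbb{C}^r$, it suffices to show that $\phi_\xi(z) := |\xi|_{h(z)}^p$ is continuous in $z$ for every $\xi \in \mathbb{C}^r$. Following Theorem \ref{Thm:PluriHarm}, I would first show upper semi-continuity: take $x_j \to x_0$ realizing $\limsup_{z \to x_0} \phi_\xi(z)$, use the $\leq$ direction of (iii) at $x_0$ to find $f$ on $x_0+P$ with $f(x_0) = \xi$ and $\int|f|_h^p d\lambda \leq \textup{Vol}(P)\phi_\xi(x_0) + \varepsilon$, and apply the $\geq$ direction of (iii) at $x_j$ to $f|_{x_j+sP}$ with $x_j+sP \Subset x_0+P$ to obtain $s^{2n}|f(x_j)|_{h(x_j)}^p \leq \phi_\xi(x_0) + \varepsilon/\textup{Vol}(P)$; then Lemma \ref{Lemma:NormIneq} applied to the decomposition $\xi = f(x_j) - (f(x_j) - \xi)$ converts this to a bound on $\phi_\xi(x_j)$, provided the error $|f(x_j) - \xi|_{h(x_j)}^p$ can be forced to vanish in the limit. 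The lower semi-continuity then uses the already-established usc to bound $h$ above locally, extracts a compactly convergent subsequence from (iii)-$\leq$ extensions at $x_j$ via Lemma \ref{Lemma:GZ} and Montel, and concludes via Fatou and the equality in (iii) at $x_0$.

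Step 2 upgrades continuity to Griffiths flatness. Since $|u|_{h^*}$ is continuous for every local holomorphic section $u$ of $E^*$, Theorem \ref{Thm:DNWZ}(2) applied to the $\leq$ direction of (iii) gives Griffiths semi-positivity; symmetrically, the mean-value inequality from the $\geq$ direction, together with continuity of $|u|_h^p$ and Lemma \ref{Lemma:CharPsh}, makes $|u|_h^p$ psh for every local holomorphic section $u$ of $E$, so Proposition \ref{Prop:EquivGrif} yields Griffiths semi-negativity. Step 3 concludes smoothness and vanishing curvature: by Lemma \ref{Lemma:Raufi}, $A := h^{-1}\partial h$ has $L^2_{\textup{loc}}$ coefficients and $\Theta_h := \bar\partial A$ is a well-defined current; the combination of Griffiths semi-positivity and semi-negativity forces $\Theta_h = 0$ as a current, hence $\bar\partial A = 0$ distributionally, and by elliptic regularity for $\bar\partial$, each entry of $A$ is actually holomorphic. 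The identity $\partial A = -A \wedge A$ (automatic for Chern connection forms) combined with $\bar\partial A = 0$ yields the integrability $dA + A \wedge A = 0$, so one solves the holomorphic linear system $\partial g = -Ag$ with $g(x_0) = a$ chosen so that $a^* h(x_0) a = I_r$, obtaining a holomorphic $g: x+P \to \textup{GL}(r, \mathbb{C})$; then, as in the proof of Lemma \ref{Lemma:FlatUnitary}, $\tilde{h} := g^* h g$ satisfies $d\tilde{h} = 0$ with $\tilde{h}(x_0) = I_r$, so $\tilde{h} \equiv I_r$. The frame $e \cdot g$ is therefore unitary and holomorphic, making $h$ real-analytic and flat.

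The principal obstacle is Step 1. Unlike the scalar case of Theorem \ref{Thm:PluriHarm} where $|f(x_j)|^p \to 1$ trivially in the fixed Euclidean metric, the vector-valued setting forces a comparison between $|f(x_j)|_{h(x_j)}^p$ and $|\xi|_{h(x_j)}^p$ where the norm itself varies with $j$. Lemma \ref{Lemma:NormIneq} handles this comparison at the cost of an error term $|f(x_j) - \xi|_{h(x_j)}^p$ that must be made to vanish in the limit, and forcing this vanishing requires an a priori local upper bound on $h$. Establishing this bound in tandem with the semi-continuity argument, for instance by bootstrapping between the usc/lsc of individual $\phi_\xi$ and their behavior on a finite spanning set of $\xi$'s, is the principal technical burden of Step 1 and the feature that most sharply distinguishes the vector-valued case from the scalar one.
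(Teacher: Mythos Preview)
Your overall architecture matches the paper's: (i)$\Rightarrow$(ii) via Lemma \ref{Lemma:FlatUnitary}, (ii)$\Rightarrow$(iii) trivial, and (iii)$\Rightarrow$(i) by first obtaining continuity of $h$, then Griffiths semi-positivity/negativity, then $\Theta_h=0$, then smoothness. Two points deserve comment.

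\textbf{Step 1 (upper semi-continuity).} You correctly isolate the obstacle: applying Lemma \ref{Lemma:NormIneq} at the point $x_j$ produces an error $|f(x_j)-\xi|_{h(x_j)}^p$ that you cannot kill without an a~priori local bound on $h$, and you propose to bootstrap. The paper sidesteps the bootstrap entirely. Instead of extending a single $\xi$, it extends a full basis $\xi_1,\dots,\xi_r$ of $E_x$ to near-extremal sections $f_\alpha$ with $\int_{x+P}|f_\alpha|_h^p d\lambda<\infty$; these form a local holomorphic frame, so each $v_j$ is written \emph{exactly} as $\sum_\alpha c_{j,\alpha}f_\alpha(x_j)$ with $c_{j,\alpha}\to c_\alpha$. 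One then applies the $\geq$ direction at $x_j$ and uses Lemma \ref{Lemma:NormIneq} \emph{inside the integral} on $|\sum c_{j,\alpha}f_\alpha|_h^p$, so the error terms become $|c_{j,\alpha}-c_\alpha|^p\int_{x+P}|f_\alpha|_h^p d\lambda$, which vanish as $j\to\infty$ with no pointwise control on $h$ required. This is the key maneuver you are missing.

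\textbf{Step 3 (curvature vanishing and smoothness).} Your assertion that ``Griffiths semi-positivity and semi-negativity forces $\Theta_h=0$ as a current'' is true but not immediate when $h$ is merely continuous. The paper makes this rigorous via Lemma \ref{Lemma:PT}: approximate $h$ (and separately the dual $g=(h^{-1})^t$) by smooth Griffiths-negative metrics $h_\nu$, use Lemma \ref{Lemma:Raufi} to pass the pointwise inequality $\int i(\xi^*h_\nu\Theta_{h_\nu}\xi)\wedge\phi\leq 0$ to the limit, and combine the two signs coming from $h$ and $g$ (noting $\Theta_g=-\Theta_h^t$) to conclude $\Theta_h=0$. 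For smoothness, the paper does not run the Frobenius argument you sketch; it simply observes that $\bar\partial(h^{-1}\partial h)=0$ makes $h^{-1}\partial h$ holomorphic, hence $\partial h=h\cdot(h^{-1}\partial h)$ is as regular as $h$, and iterates $C^0\Rightarrow C^1\Rightarrow C^2\Rightarrow\cdots$. Your Frobenius route also works once $A=h^{-1}\partial h$ is known to be holomorphic, but you should check that the identities $\partial A=-A\wedge A$ and $d(g^*hg)=0$ hold distributionally when $h$ is only continuous before invoking them.
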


\begin{proof}
(i) $\Rightarrow$ (ii): Since $\Theta_h\equiv0$, by Lemma \ref{Lemma:FlatUnitary}, there exists a unitary holomorphic frame $(e_1,\cdots,e_r)$ of $(E,h)$ on $x+P$.
Assume that $v=\sum_jc_je_j(x)$, then we define a holomorphic section $f\in\Gamma(x+P,E)$ by $f=\sum c_je_j$. Clearly,
$$ \int_{x+P}|f|_h^pd\lambda = \int_{x+P} \big(\sum\nolimits_j|c_j|^2\big)^{p/2} d\lambda = \textup{Vol}(P)|v|_h^p. $$
On the other hand, since $(E,h)$ is Griffiths semi-negative, for any $g\in\Gamma(x+P,E)$, we know $|g|_h^p$ is a psh function, then the mean-value inequality yields
$$ \int_{x+P}|g|_h^pd\lambda \geqslant \textup{Vol}(P)|g(x)|_h^p.$$
Therefore, \eqref{CondFlat} holds for any $x+P\Subset\Omega$ and $v\in E_x$.

(ii) $\Rightarrow$ (iii): trivial.\\

(iii) $\Rightarrow$ (i): The proof of this part is divided into four steps.

\textit{Step 1: $h$ is upper semi-continuous and Griffiths semi-negative.}

For any sequence $v_j\in E_{x_j}$ converging to $v\in E_x$, we need to show that
$$ \varlimsup_{j\to+\infty} |v_j|_h \leqslant |v|_h. $$
In the following, we fix a holomorphic cylinder $x+P\Subset\Omega$ with $\mathfrak{d}(P)<\gamma(x)$.

Let $\varepsilon,\delta>0$ and $0<s<1$ be given. We choose a basis $\{\xi_1,\cdots,\xi_r\}$ of $E_x$, then there exist holomorphic sections $f_\alpha\in\Gamma(x+P,E)$ such that $f_\alpha(x)=\xi_\alpha$ and
$$ \int_{x+P}|f_\alpha|_h^pd\lambda \leqslant (1+\varepsilon)\textup{Vol}(P)|\xi_\alpha|_h^p < +\infty. $$
By the continuity, $f_\alpha$ are linearly independent in some neighborhood $U$ of $x$. Clearly, $x_j\in U$, $x_j+sP\Subset x+P$ and $\mathfrak{d}(sP)<\gamma(x_j)$ for all $j\gg1$.

We write $v_j = \sum_\alpha c_{j,\alpha}f_\alpha(x_j)$ and $v = \sum_\alpha c_\alpha\xi_\alpha$. Since $v_j\to v$, it is clear that $c_{j,\alpha}\to c_\alpha$. Without loss of generality, we assume that $c_2=\cdots=c_r=0$, i.e. $v=c_1\xi_1$.
By Lemma \ref{Lemma:NormIneq}, there is a constant $C$ depending only on $r,p,\delta$ such that
\begin{align*}
	\left|\sum\nolimits_\alpha c_{j,\alpha} f_\alpha\right|_h^2 \leqslant (1+\delta)|c_1f_1|_h^p
	+ C \sum\nolimits_\alpha |(c_{j,\alpha}-c_\alpha)f_\alpha|_h^p.
\end{align*}
Since $x_j+sP\Subset x+P$, we have
\begin{align*}
	&\, s^{2n}\textup{Vol}(P) \left|\sum\nolimits_\alpha c_{j,\alpha} f_\alpha(x_j)\right|_h^p \leqslant \int_{x_j+sP} \left|\sum\nolimits_\alpha c_{j,\alpha} f_\alpha\right|_h^p d\lambda \\
	\leqslant &\, (1+\delta)|c_1|^p \int_{x_j+sP}|f_1|_h^pd\lambda
	+ C \sum\nolimits_\alpha |c_{j,\alpha}-c_\alpha|^p \int_{x_j+sP}|f_\alpha|_h^p d\lambda \\
	\leqslant &\, (1+\delta)|c_1|^p \int_{x+P}|f_1|_h^pd\lambda
	+ C \sum\nolimits_\alpha |c_{j,\alpha}-c_\alpha|^p \int_{x+P}|f_\alpha|_h^p d\lambda.
\end{align*}
Consequently,
\begin{align*}
	\varlimsup_{j\to+\infty} s^{2n}\textup{Vol}(P) \left|\sum\nolimits_\alpha c_{j,\alpha} f_\alpha(x_j)\right|_h^p & \leqslant (1+\delta)|c_1|^p \int_{x+P} |f_1|_h^p d\lambda \\
	& \leqslant (1+\delta)|c_1|^p(1+\varepsilon)\textup{Vol}(P)|\xi_1|_h^p.
\end{align*}
Let $s\nearrow1$, $\varepsilon\searrow0$ and $\delta\searrow0$, we get
\begin{gather*}
	\varlimsup_{j\to+\infty} |v_j|_h^p = \varlimsup_{j\to+\infty} \left|\sum\nolimits_\alpha c_{j,\alpha} f_\alpha(x_j)\right|_h^p \leqslant |c_1|^p|\xi_1|_h^p = |v|_h^p.
\end{gather*}

In conclusion, the metric $h:E\to[0,+\infty)$ is upper semi-continuous.

Given a holomorphic section $f\in\Gamma(U,E)$, for any holomorphic cylinder $x+P\Subset U$ with $\mathfrak{d}(P)<\gamma(x)$, the equation \eqref{CondFlat} yields
$$ \int_{x+P}|f|_h^pd\lambda \geqslant \textup{Vol}(P)|f(x)|_h^p. $$
Since $|f|_h$ is upper semi-continuous, it follows that $|f|_h^p$ is a psh function. By Proposition \ref{Prop:EquivGrif}, $h$ is Griffiths semi-negative.\\

\textit{Step 2: $h$ is lower semi-continuous and Griffiths semi-positive.}

For any sequence $v_j\in E_{x_j}$ converging to $v\in E_x$, we need to show that
$$ \varliminf_{j\to+\infty} |v_j|_h \geqslant |v|_h. $$
By passing to a subsequence, we may assume that the limit of $|v_j|_h$ exists. Since $h$ is upper semi-continuous, there is a constant $C$ so that $|v_j|_h\leqslant C$ for all $j$. In the following, we fix a holomorphic cylinder $x+P\Subset\Omega$ with $\mathfrak{d}(P)<\gamma(x)$.

Let $\varepsilon>0$ and $0<s<1$ be fixed for the moment. Clearly, $\mathfrak{d}(P)<\gamma(x_j)$, $x_j+P\Subset\Omega$ and  $x+sP\Subset x_j+P$ for all $j\gg1$. By \eqref{CondFlat}, for each $j\gg1$, there exists an $f_j\in\Gamma(x_j+P,E)$ such that $f_j(x_j)=v_j$ and
$$ \int_{x_j+P}|f_j|_h^pd\lambda \leqslant (1+\varepsilon)\textup{Vol}(P)|v_j|_h^p. $$

Let $U\subset x+sP$ be an open subset such that $E|_U$ is trivial and $K$ be a compact subset of $U$. We choose a holomorphic frame field $(e_1,\cdots,e_r)$ of $E|_U$, then $h$ can be regarded as a family of positive definite Hermitian matrices on $U$. Since $h$ is Griffiths semi-negative, by Lemma \ref{Lemma:PT}, there exists a constant $c_K>0$ such that $h\geqslant c_K(\det h) I_r$ on $K$. Moreover, $\varphi:=\log\det h>-\infty$ is a psh function on $U$. We write $f_j|_U=\sum_\alpha f_{j,\alpha}e_\alpha$, where $f_{j,\alpha}\in\mathcal{O}(U)$, then
$$ |f_j|_h^2 \geqslant c_K \sum\nolimits_{\alpha=1}^r|f_{j,\alpha}|^2e^{\varphi}. $$
Since $\sum_{j=1}^r t_j^{p/2} \leqslant \max\{1,r^{1-p/2}\} (\sum_{j=1}^r t_j)^{p/2}$ for any positive reals $t_j$, we have
\begin{equation*}
	\sum\nolimits_{\alpha=1}^r|f_{j,\alpha}|^pe^{p\varphi/2} \leqslant \max\{1,r^{1-p/2}\} c_K^{-p/2} |f_j|_h^p.
\end{equation*}
Therefore,
\begin{equation*}
	\int_K \sum\nolimits_\alpha |f_{j,\alpha}|^pe^{p\varphi/2} d\lambda \leqslant \max\{1,r^{1-p/2}\} c_K^{-p/2} (1+\varepsilon)\textup{Vol}(P)C^p < +\infty.
\end{equation*}
By similar arguments as Theorem \ref{Thm:PluriHarm}, there exists a constant $C_K$ such that
$$ \sum\nolimits_\alpha \int_K|f_{j,\alpha}|^{p/2}d\lambda \leqslant C_K < +\infty. $$
By Montel's theorem and diagonal argument, there exists a subsequence $f_{j_k}|_{x+sP}$ that converges uniformly on any compact subset of $x+sP$ to some $f\in\Gamma(x+sP,E)$. By Fatou's lemma,
\begin{align*}
	\int_{x+sP}|f|_h^pd\lambda & = \int_{x+sP}\lim_{k\to+\infty}|f_{j_k}|_h^pd\lambda \\
	& \leqslant \varliminf_{k\to+\infty} \int_{x+sP}|f_{j_k}|_h^pd\lambda \leqslant \lim_{k\to+\infty} 
	(1+\varepsilon)\textup{Vol}(P)|v_{j_k}|_h^p.
\end{align*}
Since $f_j$ converges compactly to $f$ and $x_j\to x$, it is clear that
$$ f(x) = \lim_{k\to+\infty}f_{j_k}(x_{j_k}) = \lim_{k\to+\infty}v_{j_k} = v. $$
By \eqref{CondFlat},
\begin{equation*}
	s^{2n}\textup{Vol}(P)|v|_h^p \leqslant \int_{x+sP}|f|_h^pd\lambda \leqslant (1+\varepsilon)\textup{Vol}(P) \lim_{k\to+\infty}|v_{j_k}|_h^p.
\end{equation*}
Let $\varepsilon\searrow0$ and $s\nearrow1$, we conclude that
$$ |v|_h \leqslant \lim_{k\to+\infty} |v_{j_k}|_h = \varliminf_{j\to+\infty} |v_j|_h. $$

In conclusion, the Hermitian metric $h:E\to[0,+\infty)$ is continuous.

For any holomorphic cylinder $x+P\Subset\Omega$ with $\mathfrak{d}(P)<\gamma(x)$ and any $v\in E_x$, by using Montel's theorem, we can find an $f\in\Gamma(x+P,E)$ such that $f(x)=v$ and $\int_{x+P}|f|_h^pd\lambda = \textup{Vol}(P)|v|_h^p$. Therefore, $(E,h)$ satisfies the optimal $L^p$-extension condition. By Theorem \ref{Thm:DNWZ}, $h$ is also Griffiths semi-positive.\\

\textit{Step 3: $\Theta_h:=\overline{\partial}(h^{-1}\partial h)=0$ in the sense of currents.}

In the following, let $U\Subset V\Subset\Omega$ be open subsets such that $E|_V$ is trivial. We fix a holomorphic frame $(e_1,\ldots,e_r)$ of $E|_V$, then $h$ can be regarded as a continuous family of positive definite Hermitian matrices on $V$.

According to Lemma \ref{Lemma:PT}, there exists a sequence of smooth Hermitian metrics $\{h_\nu\}_{\nu=1}^\infty$ with Griffiths negative curvature, decreasingly converging to $h$ pointwise on $\overline{U}$. Since
\begin{equation*}
	\langle{e_\alpha,e_\beta}\rangle_h = \frac{1}{4} \sum\nolimits_{k=0}^3 i^k |e_\alpha+i^ke_\beta|_h^2,
\end{equation*}
by Dini's theorem, $\langle{e_\alpha,e_\beta}\rangle_{h_\nu}$ converges to $\langle{e_\alpha,e_\beta}\rangle_h$ uniformly on $\overline{U}$, i.e. $h_\nu$ converges to $h$ uniformly on $\overline{U}$.

Since $h$ is continuous, according to Lemma \ref{Lemma:Raufi}, the entries of $\partial h$ are $L_\textup{loc}^2$-forms, the entries of $\Theta_h := \overline{\partial}(h^{-1}\partial h)$ are currents with measure coefficients, and $\Theta_{h_\nu} := \overline{\partial}(h_\nu^{-1}\partial h_\nu)$ converge weakly to $\Theta_h$ as currents with measure coefficients.

Since $h_\nu$ are Griffiths negative, for any $\xi\in C^0(U,\mathbb{C}^r)$ and any strongly positive test-form $\phi\in C_c^0(U,\wedge^{n-1,n-1}T_\Omega^*)$, we have
\begin{equation*}
	\int_U \langle{i\Theta_{h_\nu}\xi,\xi}\rangle_{h_\nu} \wedge \phi
	= \int_U i(\xi^*h_\nu\Theta_{h_\nu}\xi) \wedge \phi \leqslant 0.
\end{equation*}
Since $h_\nu$ converges uniformly to $h$ and $\Theta_{h_\nu}$ converges weakly to $\Theta_h$, we see that
$$ \int_U i(\xi^*h\Theta_h\xi) \wedge \phi \leqslant 0. $$

Notice that, the dual metric $g:=(h^{-1})^t$ of $h$ is also Griffiths semi-negative. By similar arguments, the entries of $\partial g$ are $L_\textup{loc}^2$-forms, the entries of $\Theta_g := \overline{\partial}(g^{-1}\partial g)$ are currents with measure coefficients, and
\begin{equation} \label{Eq:semipos}
	\int_U i(\eta^*g\Theta_g\eta) \wedge \phi \leqslant 0
\end{equation}
for any $\eta\in C^0(U,\mathbb{C}^r)$ and any strongly positive test-form $\phi\in C_c^0(U,\wedge^{n-1,n-1}T_\Omega^*)$.

Since the entries of $\partial h$ and $\partial(h^{-1})=(\partial g)^t$ are $L_\textup{loc}^2$-forms, it is clear that
$$ (\partial h) h^{-1} + h\,\partial(h^{-1}) = 0. $$
Consequently,
\begin{equation*}
	g^{-1}\partial g = h^t \partial (h^{-1})^t = \big( \partial(h^{-1}) h \big)^t = -\big( h^{-1} \partial h \big)^t, \quad
	\Theta_g = -\Theta_h^t.
\end{equation*}
Notice that,
\begin{equation*}
	\eta^* g\Theta_g \eta = (\eta^* g\Theta_g \eta)^t = (\eta^th^{-1}) h\Theta_h (h^{-1}\overline{\eta})
	= - \xi^* h\Theta_h \xi,
\end{equation*}
where $\xi:=h^{-1}\overline{\eta} \in C^0(U,\mathbb{C}^r)$. Then the inequality \eqref{Eq:semipos} can be reformulated as
$$ \int_U i(\xi^*h\Theta_h\xi) \wedge \phi \geqslant 0. $$

In summary, $\int_U i(\xi^*h\Theta_h\xi) \wedge \phi = 0$ for any continuous section $\xi\in C^0(U,\mathbb{C}^r)$ and any strongly positive test-form $\phi\in C_c^0(U,\wedge^{n-1,n-1}T_\Omega^*)$. By linear combinations, we conclude that
$$ \int_U (\eta^*\Theta_h\xi) \wedge\phi = 0 $$
for all $\eta,\xi\in C^0(U,\mathbb{C}^r)$ and $\phi\in C_c^0(U,\wedge^{n-1,n-1}T_\Omega^*)$, which implies that $\Theta_h\equiv0$ in the sense of currents.\\

\textit{Step 4: $h$ is smooth and flat.}

We fix a local holomorphic frame of $E$, then $h$ can be regarded as a continuous family of Hermitian matrices. Since $\Theta_h := \overline{\partial}(h^{-1}\partial h) = 0$ in the sense of currents, we see that the entries of $h^{-1}\partial h$ are  holomorphic $1$-forms. Taking conjugation, we see that $(h^{-1}\partial h)^* = (\overline{\partial}h)h^{-1}$ is anti-holomorphic.
In particular, $\partial_jh$ and $\overline{\partial}_kh$ are continuous, which means that $h$ is $C^1$-smooth.

Since $h^{-1}\partial h,(\overline{\partial}h)h^{-1}\in C^\infty$ and $h\in C^1$, we see that
\begin{equation*}
	\partial_j h = h(h^{-1}\partial_jh) \quad\text{and}\quad \overline{\partial}_kh = ((\overline{\partial}_kh)h^{-1})h
\end{equation*}
are $C^1$-smooth. Consequently, $h$ is $C^2$-smooth. Repeating this process, we conclude that $h$ is $C^\infty$-smooth. Since $h$ is both Griffiths semi-positive and semi-negative (in the usual sense), it is clear that $\Theta_h\equiv0$.
\end{proof}

\section{Appendix: The Weighted $p$-Bergman Kernel}

Let $\Omega\subset\mathbb{C}^n$ be an open set and $\varphi$ be a measurable function on $\Omega$ which is locally bounded from above. We always assume that $\{z\in\Omega: \varphi(z)=-\infty\}$ is a set of zero measure. For any $p>0$, the weighted $p$-Bergman space of $\Omega$ is defined as
\begin{equation*}
	A^p(\Omega;e^{-\varphi}) := \left\{ f\in\mathcal{O}(\Omega): \|f\|_p= \big( \int_\Omega|f|^pe^{-\varphi}d\lambda \big)^{1/p} < +\infty \right\}.
\end{equation*}
For any compact set $K\subset\Omega$, there is a constant $C>0$ such that $\sup_K|f| \leqslant C\|f\|_p$ for all $f\in A^p(\Omega;e^{-\varphi})$.
Consequently, $A^p(\Omega;e^{-\varphi})$ is a Banach space (resp. complete metric space) for $p\geqslant1$ (resp. $0<p<1$) and the evaluation maps $\mathbf{ev}_x:f\mapsto f(x)$ are continuous linear functionals on $A^p(\Omega;e^{-\varphi})$. 
Similar to the classical Bergman theory, the weighted $p$-Bergman kernel of $\Omega$ is defined as
\begin{equation*}
B_{\Omega,p}(x;e^{-\varphi}) := \|\textbf{ev}_x\|^p = \sup\{ |f(x)|^p: f\in A^p(\Omega;e^{-\varphi}), \|f\|_p\leqslant1 \}.
\end{equation*}
Using Montel's theorem, there exists an $f\in A^p(\Omega;e^{-\varphi})$ achieving the supremum.
When $p=2$, it is well-known that $B_{\Omega,2}(\cdot;e^{-\varphi})$ is real-analytic. For general $p>0$, Chen-Zhang \cite{ChenZhang22}  showed that $B_{\Omega,p}(\cdot;e^{-\varphi})$ is locally Lipschitz continuous.
We refer the reader to \cite{ChenZhang22}  for a systemic study of the $p$-Bergman theory.

Notice that, provided $\varphi$ is locally bounded from above, the infimum in \eqref{Cond1} is precisely the reciprocal of $B_{x+P,p}(x;e^{-\varphi})$.
In this appendix, we will prove some regularity results concerning $B_{\Omega,p}$ as the domain $\Omega$ varies. As an application, we obtain another solution to the conjecture of Inayama \cite{Inayama}.

\begin{proposition} \label{Prop:App1}
Let $\Omega\subset\mathbb{C}^n$ be an open set and $\varphi$ be a measurable function on $\Omega$ which is locally bounded from above. Let $\{\Omega_j\}_{j=1}^\infty$ be a sequence of open subsets of $\Omega$ so that $\Omega_j\subset\Omega_{j+1}$ and $\Omega=\cup_{j=1}^\infty\Omega_j$, then
$$ \lim_{j\to+\infty} B_{\Omega_j,p}(x;e^{-\varphi}) = B_{\Omega,p}(x;e^{-\varphi}), \quad \forall x\in\Omega. $$
\end{proposition}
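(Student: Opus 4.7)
The plan is to establish the equality by a monotonicity bound in one direction and a normal-family/Fatou argument in the other. Write $L_j := B_{\Omega_j,p}(x;e^{-\varphi})$ for $j$ large enough that $x\in\Omega_j$.

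\emph{Step 1 (Monotonicity).} Since $\Omega_j\subset\Omega_{j+1}\subset\Omega$, restriction defines a norm-nonincreasing map $A^p(\Omega;e^{-\varphi})\to A^p(\Omega_j;e^{-\varphi})$ that preserves the value at $x$. Taking suprema, we obtain $L_j\geqslant L_{j+1}\geqslant B_{\Omega,p}(x;e^{-\varphi})$. In particular, the limit $L:=\lim_{j\to\infty}L_j$ exists and satisfies $L\geqslant B_{\Omega,p}(x;e^{-\varphi})$.

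\emph{Step 2 (Extremals and a normal family).} For each sufficiently large $j$, pick $f_j\in A^p(\Omega_j;e^{-\varphi})$, as guaranteed by the existence of minimizers noted in the appendix, with $\|f_j\|_p\leqslant1$ and $|f_j(x)|^p=L_j$. Choose a compact exhaustion $K_1\Subset K_2\Subset\cdots$ with $\bigcup_m K_m=\Omega$. For every $m$ there is $j(m)$ with $K_m\subset\Omega_{j(m)}$, and the estimate $\sup_{K_m}|f|\leqslant C_m\|f\|_p$ from the appendix shows that $\{f_j\}_{j\geqslant j(m)}$ is uniformly bounded on $K_m$. By Montel's theorem plus a standard diagonal argument, a subsequence $\{f_{j_k}\}$ converges locally uniformly on $\Omega$ to some $f\in\mathcal{O}(\Omega)$.

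\emph{Step 3 (Fatou).} Extend each $f_{j_k}$ by zero to obtain $\tilde f_{j_k}$ defined on $\Omega$. For every $z\in\Omega$ we have $z\in\Omega_{j_k}$ for all $k$ large, so $\tilde f_{j_k}(z)\to f(z)$ pointwise on $\Omega$. Applying Fatou's lemma to $|\tilde f_{j_k}|^p e^{-\varphi}$ yields
\begin{equation*}
\int_\Omega|f|^pe^{-\varphi}\,d\lambda\leqslant\liminf_{k\to\infty}\int_\Omega|\tilde f_{j_k}|^pe^{-\varphi}\,d\lambda=\liminf_{k\to\infty}\int_{\Omega_{j_k}}|f_{j_k}|^pe^{-\varphi}\,d\lambda\leqslant 1,
\end{equation*}
so $f\in A^p(\Omega;e^{-\varphi})$ with $\|f\|_p\leqslant 1$. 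By the definition of $B_{\Omega,p}$,
\begin{equation*}
B_{\Omega,p}(x;e^{-\varphi})\geqslant|f(x)|^p=\lim_{k\to\infty}|f_{j_k}(x)|^p=\lim_{k\to\infty}L_{j_k}=L,
\end{equation*}
where the last equality uses that the full sequence $\{L_j\}$ is monotone.

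Combining Steps 1 and 3 gives $L=B_{\Omega,p}(x;e^{-\varphi})$, which is the desired identity. The only subtle point I expect is the pointwise-a.e.\ convergence of $\tilde f_{j_k}$ needed to apply Fatou, but this follows immediately from the exhaustion property $\Omega=\bigcup_j\Omega_j$ together with the locally uniform convergence of the subsequence produced in Step 2; once that is in place, the remaining steps are essentially bookkeeping on the standard inequality $\int_K|f|^pe^{-\varphi}\leqslant\liminf\int_K|f_{j_k}|^pe^{-\varphi}\leqslant 1$ for every compact $K\subset\Omega$.
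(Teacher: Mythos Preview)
Your argument is correct and is precisely the classical $p=2$ proof the paper defers to: monotonicity from restriction, extraction of a locally uniform limit via Montel, and Fatou to control the norm of the limit. The only point worth making explicit is that the constant $C_m$ in the estimate $\sup_{K_m}|f|\leqslant C_m\|f\|_p$ depends only on a fixed compact neighborhood of $K_m$ inside $\Omega_{j(m)}$ and on $\sup\varphi$ there, hence is uniform in $j\geqslant j(m)$; once this is noted, the proof is complete.
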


\begin{proof}
The proof is the same as the classical case of $p=2$.
\end{proof}

\begin{proposition} \label{Prop:App2}
Let $\Omega\subset\mathbb{C}^n$ be an open set and $\varphi$ be a measurable function on $\Omega$ which is locally bounded from above. Let $P$ be a sufficiently small holomorphic cylinder so that $\Omega_P:=\{x\in\Omega:x+P\Subset\Omega\}$ is nonempty.

(i) $x\mapsto B_{x+P,p}(x;e^{-\varphi})$ is an upper semi-continuous function on $\Omega_P$, i.e.
$$ \varlimsup_{z\to x} B_{z+P,p}(z;e^{-\varphi}) \leqslant B_{x+P,p}(x;e^{-\varphi}), \quad \forall x\in\Omega_P. $$

(ii) for any $x\in\Omega_P$,
$$ \varliminf_{z\to x} B_{z+P,p}(z;e^{-\varphi}) \geqslant \lim_{s\to1^+} B_{x+sP,p}(x;e^{-\varphi}). $$
\end{proposition}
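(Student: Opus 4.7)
I will use a normal-family compactness argument combined with Fatou's lemma, via a diagonal exhaustion that works around the fact that the reference domains $x_j + P$ need not contain $x + P$. Given a sequence $x_j \to x$ in $\Omega_P$, pass to a subsequence realizing the $\varlimsup$, and for each $j$ extract an extremal $f_j \in A^p(x_j + P; e^{-\varphi})$ with $\int_{x_j+P} |f_j|^p e^{-\varphi} d\lambda \leq 1$ and $|f_j(x_j)|^p = B_{x_j+P, p}(x_j; e^{-\varphi})$; existence was noted in the text via Montel. Because $\varphi$ is locally bounded from above, $e^{-\varphi}$ is bounded below on every compact subset, so the weighted $L^p$-bounds yield uniform unweighted $L^p$-bounds on every compact $K \subset x + P$ (which lies in $x_j + P$ for $j$ large). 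The sub-mean-value property of the subharmonic functions $|f_j|^p$ then upgrades this to uniform $L^\infty$-bounds on interior compacta. Exhaust $x + P$ by $\{x + s_m P\}_{m \geq 1}$ with $s_m \nearrow 1$, and diagonalize Montel's theorem to obtain a subsequence $f_{j_k}$ converging compactly on $x + P$ to some $f \in \mathcal{O}(x + P)$. Fatou on each $x + s_m P$ (which lies in $x_{j_k} + P$ for $k$ large) gives $\int_{x + s_m P} |f|^p e^{-\varphi} d\lambda \leq 1$, and monotone convergence in $m$ then shows $\int_{x+P} |f|^p e^{-\varphi} d\lambda \leq 1$. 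Since $f_{j_k}(x_{j_k}) \to f(x)$, this forces $\varlimsup_j B_{x_j+P,p}(x_j; e^{-\varphi}) = |f(x)|^p \leq B_{x+P,p}(x; e^{-\varphi})$.

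\textbf{Plan for part (ii).} The strategy is dual and more direct: expand rather than shrink. Fix any $t > 1$ with $x + tP \Subset \Omega$. Since the cylinder $P$ is a convex open neighborhood of the origin, there exists a neighborhood $U$ of $x$ such that $z + P \subset x + tP$ for every $z \in U$. Pick an extremal $g \in A^p(x + tP; e^{-\varphi})$ with $\int_{x+tP} |g|^p e^{-\varphi} d\lambda \leq 1$ and $|g(x)|^p = B_{x+tP, p}(x; e^{-\varphi})$. For $z \in U$, the restriction $g|_{z + P}$ is a valid competitor in the definition of $B_{z + P, p}(z; e^{-\varphi})$ since $\int_{z+P} |g|^p e^{-\varphi} d\lambda \leq \int_{x+tP} |g|^p e^{-\varphi} d\lambda \leq 1$, so $B_{z+P, p}(z; e^{-\varphi}) \geq |g(z)|^p$. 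Continuity of $g$ at $x$ yields $\varliminf_{z \to x} B_{z + P, p}(z; e^{-\varphi}) \geq |g(x)|^p = B_{x + tP, p}(x; e^{-\varphi})$, and letting $t \searrow 1$ concludes the argument.

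\textbf{Main obstacle.} The delicate point is in (i): the domains $x_j + P$ shift with $j$, so one cannot a priori regard a compact limit of the $f_j$'s as an element of $A^p(x + P; e^{-\varphi})$. The diagonal exhaustion by $\{x + s_m P\}$ together with Fatou's lemma is what transfers the uniform weighted $L^p$-bound to the limit on the full cylinder $x + P$, and hence closes the argument. Part (ii) has no serious obstruction beyond the convexity-based geometric fact that $z + P \subset x + tP$ for $z$ near $x$ when $t > 1$.
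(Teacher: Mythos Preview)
Your proposal is correct and follows essentially the same approach as the paper: extremal functions plus Montel and Fatou for (i), and restriction of an extremal on a slightly larger cylinder for (ii). The only difference is in the packaging of (i): the paper works on a single fixed $x+sP$ with $0<s<1$, obtains $B_{x+sP,p}(x;e^{-\varphi})\geqslant\varlimsup_{z\to x}B_{z+P,p}(z;e^{-\varphi})$, and then invokes Proposition~\ref{Prop:App1} to let $s\nearrow1$; you instead diagonalize over an exhaustion $\{x+s_mP\}$ to produce a limit $f\in\mathcal{O}(x+P)$ directly and use monotone convergence, which makes the argument self-contained and bypasses Proposition~\ref{Prop:App1}. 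Part (ii) matches the paper's proof almost verbatim.
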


\begin{proof}
For simplicity, we denote $B_{x+P,p}(x;e^{-\varphi})$ by $\mathfrak{B}(x,P)$.

(i) Given $x\in\Omega_P$, we choose a sequence $\{x_j\}_{j=1}^\infty$ in $\Omega_P$ such that $x_j\to x$ and
$$ \varlimsup_{z\to x} \mathfrak{B}(z,P) = \lim_{j\to+\infty} \mathfrak{B}(x_j,P). $$
For each $j$, there exists an $f_j\in A^p(x_j+P;e^{-\varphi})$ such that $\int_{x_j+P}|f_j|^pe^{-\varphi}d\lambda \leqslant 1$ and $f_j(x_j) = \mathfrak{B}(x_j,P)^{1/p}$.
Let $s\in(0,1)$ be fixed for the moment, then $x+sP\Subset x_j+P$ for all $j\gg1$. By Montel's theorem, there exists a subsequence $\{f_{j_k}|_{x+sP}\}_{k=1}^\infty$ that converges uniformly on any compact subset of $x+sP$ to some $f\in\mathcal{O}(x+sP)$. By Fatou's lemma,
\begin{equation*}
	\int_{x+sP}|f|^pe^{-\varphi}d\lambda = \int_{x+sP}\lim_{k\to+\infty}|f_{j_k}|^pe^{-\varphi}d\lambda
	\leqslant \varliminf_{k\to+\infty} \int_{x+sP}|f_{j_k}|^pe^{-\varphi}d\lambda \leqslant 1.
\end{equation*}
Since $f_{j_k}$ converges compactly to $f$ and $x_j\to x$, it is clear that
$$ f(x) = \lim_{k\to+\infty} f_{j_k}(x_{j_k}) = \big( \varlimsup_{z\to x}\mathfrak{B}(z,P) \big)^{1/p}. $$
By definition,
$$ \mathfrak{B}(x,sP) \geqslant |f(x)|^p = \varlimsup_{z\to x} \mathfrak{B}(z,P).  $$
According to Proposition \ref{Prop:App1}, $\lim_{s\to1^-} \mathfrak{B}(x,sP) = \mathfrak{B}(x,P)$. Therefore,
$$ \varlimsup_{z\to x} \mathfrak{B}(z,P) \leqslant \mathfrak{B}(x,P). $$

(ii) Given $x\in\Omega_P$, we choose a sequence $\{x_j\}_{j=1}^\infty$ in $\Omega_P$ such that $x_j\to x$ and
$$ \varliminf_{z\to x} \mathfrak{B}(z,P) = \lim_{j\to+\infty} \mathfrak{B}(x_j,P). $$
We choose an $s>1$ with $x+sP\Subset\Omega$, then there exists an $f\in A^p(x+sP;e^{-\varphi})$ such that $\int_{x+sP}|f|^pe^{-\varphi}d\lambda\leqslant1$ and $\mathfrak{B}(x,sP)=|f(x)|^p$. If $j\gg1$, then $x_j+P\Subset x+sP$ and
$$ \mathfrak{B}(x_j,P) \geqslant \frac{|f(x_j)|^p}{\int_{x_j+P}|f|^pe^{-\varphi}d\lambda} \geqslant |f(x_j)|^p. $$
As a consequence,
\begin{equation*}
	\varliminf_{z\to x} \mathfrak{B}(z,P) = \lim_{j\to+\infty} \mathfrak{B}(x_j,P) \geqslant |f(x)|^p = \mathfrak{B}(x,sP).
\end{equation*}
Let $s\to1^+$, we complete the proof.
\end{proof}

\begin{proposition} \label{Prop:App3}
Let $\Omega\subset\mathbb{C}^n$ be an open set and $\varphi$ be a locally bounded measurable function on $\Omega$. For any constant $p>0$ and any holomorphic cylinder $P$, $x\mapsto B_{x+P,p}(x;e^{-\varphi})$ is a continuous function on $\Omega_P:=\{x\in\Omega:x+P\Subset\Omega\}$.
\end{proposition}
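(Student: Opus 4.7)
By Proposition \ref{Prop:App2}(i), the map $x\mapsto B_{x+P,p}(x;e^{-\varphi})$ is upper semi-continuous on $\Omega_P$, so it remains to establish lower semi-continuity. Proposition \ref{Prop:App2}(ii) reduces this, at each $x\in\Omega_P$, to verifying
\begin{equation*}
	\lim_{s\to 1^+} B_{x+sP,p}(x;e^{-\varphi}) = B_{x+P,p}(x;e^{-\varphi}).
\end{equation*}
Enlarging the domain cannot increase the Bergman kernel, so ``$\leqslant$'' is automatic; everything hinges on the reverse inequality.

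My plan is a scaling argument. Fix $s_0>1$ with $x+s_0P\Subset\Omega$; local boundedness of $\varphi$ yields finite constants $m\leqslant\varphi\leqslant M$ on $\overline{x+s_0P}$. By Montel's theorem one can take an extremal $f\in A^p(x+P;e^{-\varphi})$ with $\|f\|_p=1$ and $|f(x)|^p=B_{x+P,p}(x;e^{-\varphi})$. For $1<s<s_0$ set $g_s(z):=f(x+s^{-1}(z-x))\in\mathcal{O}(x+sP)$, so that $g_s(x)=f(x)$; the change of variables $w=x+s^{-1}(z-x)$ gives
\begin{equation*}
	\int_{x+sP}|g_s|^pe^{-\varphi}d\lambda = s^{2n}\int_{x+P}|f(w)|^p \psi_s(w)\,d\lambda_w, \qquad \psi_s(w):=e^{-\varphi(x+s(w-x))}.
\end{equation*}
Consequently $B_{x+sP,p}(x;e^{-\varphi})\geqslant |f(x)|^p \big/ \bigl( s^{2n}\int_{x+P}|f|^p\psi_s\,d\lambda \bigr)$, and the task collapses to showing that $\int_{x+P}|f|^p\psi_s\,d\lambda$ tends to $\int_{x+P}|f|^pe^{-\varphi}\,d\lambda=1$ as $s\to 1^+$.

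The weights $\psi_s$ are uniformly bounded by $e^{-m}$, while $|f|^p\in L^1(x+P)$ since $e^{-\varphi}\geqslant e^{-M}$ forces $\|f\|_{L^p(x+P)}^p\leqslant e^M$. Dominated convergence will thus finish the job once $\psi_s\to e^{-\varphi}$ in measure on $x+P$. The main obstacle is precisely this convergence: for merely measurable $\varphi$, $\psi_s$ need not converge pointwise to $e^{-\varphi}$. The way around is the $L^1$-continuity of the scaling pullback for bounded measurable weights: approximate $e^{-\varphi}$ in $L^1(x+s_0P)$ by a continuous function $\chi$; then $\chi(x+s(w-x))\to\chi(w)$ uniformly on $\overline{x+P}$ by uniform continuity of $\chi$ on the compact set $\overline{x+s_0P}$, and the substitution $u=x+s(w-x)$ converts $\int_{x+P}|(e^{-\varphi}-\chi)(x+s(w-x))|\,d\lambda_w$ into $s^{-2n}\int_{x+sP}|e^{-\varphi}-\chi|\,d\lambda$, which is uniformly small in $s\in(1,s_0)$. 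A three-term triangle inequality then yields $\psi_s\to e^{-\varphi}$ in $L^1(x+P)$, hence in measure; passing to a subsequence converging almost everywhere and invoking dominated convergence (then the subsequence principle to upgrade to the full limit) completes the argument.
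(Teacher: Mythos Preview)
Your proof is correct and follows the same overall strategy as the paper: reduce via Proposition~\ref{Prop:App2} to the one-sided limit $\lim_{s\to1^+}\mathfrak{B}(x,sP)\geqslant\mathfrak{B}(x,P)$, then test the $p$-Bergman kernel on $x+sP$ with the dilation $g_s(z)=f(x+s^{-1}(z-x))$ of an extremal $f$ for $x+P$, and finally prove the relevant integrals converge.

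The only difference is in the convergence step. You change variables to $x+P$, which pulls back the weight to $\psi_s(w)=e^{-\varphi(x+s(w-x))}$; since $\varphi$ is merely measurable this need not converge pointwise, and you correctly resolve this by proving $\psi_s\to e^{-\varphi}$ in $L^1$ via approximation by continuous functions, then invoking dominated convergence along a.e.-convergent subsequences. The paper instead leaves the integral on $s_kP$ and compares $\mathbf{1}_{s_kP}|f_k|^pe^{-\varphi}$ with $\mathbf{1}_P|f|^pe^{-\varphi}$ directly: here the weight $e^{-\varphi}$ is fixed and it is $f_k(z)=f(z/s_k)$ that moves, and since $f$ is holomorphic (hence continuous) one has honest a.e.\ convergence, whence Egorov plus absolute continuity of $\int|f|^p$ finish the job. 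The paper's route is a bit slicker in that it avoids the $L^1$-continuity-of-dilations lemma altogether; your route is equally valid and makes the role of local boundedness of $\varphi$ (needed both for the dominating function and for $e^{-\varphi}\in L^1_{\mathrm{loc}}$) just as transparent.
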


\begin{proof}
Let $\mathfrak{B}(x,P):=B_{x+P,p}(x;e^{-\varphi})$. Having Proposition \ref{Prop:App2}, it is sufficient to show that $\lim_{s\to1^+}\mathfrak{B}(x,sP)=\mathfrak{B}(x,P), \forall x\in\Omega_P$. \newpage

For simplicity, we assume that $x=0$. We choose a decreasing sequence $\{s_k\}_{k=0}^\infty$ satisfying $s_0P\Subset\Omega$ and $\lim_{k\to+\infty}s_k=1$. There exists an $f\in A^p(P;e^{-\varphi})$ such that $\int_P|f|^pe^{-\varphi}d\lambda \leqslant 1$ and $|f(0)|^p=\mathfrak{B}(0,P)$. Since $C:=\sup_{s_0P}|\varphi|<+\infty$, it is clear that $\int_P|f|^pd\lambda<+\infty$. For each $k\geqslant0$, we define a holomorphic function $f_k\in\mathcal{O}(s_kP)$ by $f_k(z):=f(z/s_k)$.

We fix an $\varepsilon>0$ for a moment. By the absolute continuity of Lebesgue integrals, there exists a $\delta>0$ so that $\int_E|f|^pd\lambda<\varepsilon$ for any measurable set $E\subset P$ with $\lambda(E)<\delta$. Since $\textbf{1}_{s_kP}f_k$ converges to $\textbf{1}_Pf$ almost everywhere, by Erogov's theorem, there exists a measurable set $E_\delta\subset s_0P$ with $\lambda(E_\delta)<\delta$ such that $\textbf{1}_{s_kP}|f_k|^pe^{-\varphi}$ converges uniformly to $\textbf{1}_P|f|^pe^{-\varphi}$ on $s_0P\setminus E_\delta$. By direct computations,
\begin{align*}
&\, \left| \int_P|f|^pe^{-\varphi}d\lambda - \int_{s_kP}|f_k|^pe^{-\varphi}d\lambda \right| \\
\leqslant &\, \int_{s_0P\setminus E_\delta} \big|\textbf{1}_P|f|^pe^{-\varphi} - \textbf{1}_{s_kP}|f_k|^pe^{-\varphi}\big| d\lambda + \int_{P\cap E_\delta}|f|^pe^{-\varphi}d\lambda + \int_{s_kP\cap E_\delta}|f_k|^pe^{-\varphi}d\lambda \\
\leqslant &\, \int_{s_0P\setminus E_\delta} \left|\cdots\right| d\lambda + e^C\int_{P\cap E_\delta}|f|^pd\lambda + s_k^{2n}e^C\int_{P\cap s_k^{-1}E_\delta}|f|^pd\lambda.
\end{align*}
Since $\lambda(s_k^{-1}E_\delta)\leqslant \lambda(E_\delta)<\delta$, it follows that
\begin{equation*}
\lim_{k\to+\infty} \left| \int_P|f|^pe^{-\varphi}d\lambda - \int_{s_kP}|f_k|^pe^{-\varphi}d\lambda \right| \leqslant 2e^C\varepsilon.
\end{equation*}
Since $\varepsilon>0$ is arbitrary, it follows that
\begin{equation*}
\lim_{k\to+\infty} \int_{s_kP}|f_k|^pe^{-\varphi}d\lambda = \int_P|f|^pe^{-\varphi}d\lambda \leqslant 1.
\end{equation*}
By the definition of $p$-Bergman kernels,
\begin{equation*}
\lim_{s\to1^+}\mathfrak{B}(0,sP) = \lim_{k\to+\infty}\mathfrak{B}(0,s_kP) \geqslant \lim_{k\to+\infty}\frac{|f_k(0)|^p}{\int_{s_kP}|f_k|^pe^{-\varphi}d\lambda} \geqslant \mathfrak{B}(0,P).
\end{equation*}
Since $\mathfrak{B}(0,sP)\leqslant\mathfrak{B}(0,P)$ for any $s>1$, this completes the proof.
\end{proof}

\begin{proposition} \label{Prop:App4}
Let $\Omega\subset\mathbb{C}^n$ be an open set and $\varphi$ be a psh function on $\Omega$. For any constant $0<p\leqslant2$ and any holomorphic cylinder $P$, $x\mapsto B_{x+P,p}(x;e^{-\varphi})$ is a continuous function on $\Omega_P:=\{x\in\Omega:x+P\Subset\Omega\}$.
\end{proposition}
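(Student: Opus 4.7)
By Proposition~\ref{Prop:App2}, with $\mathfrak{B}(y,P):=B_{y+P,p}(y;e^{-\varphi})$, the map $y\mapsto\mathfrak{B}(y,P)$ is upper semi-continuous on $\Omega_P$ and satisfies $\varliminf_{z\to x}\mathfrak{B}(z,P)\geqslant\lim_{s\to 1^+}\mathfrak{B}(x,sP)$ for every $x\in\Omega_P$. Continuity at $x$ therefore reduces to the equality $\lim_{s\to 1^+}\mathfrak{B}(x,sP)=\mathfrak{B}(x,P)$. The inequality ``$\leqslant$'' is immediate from monotonicity: for $s>1$, $x+sP\supset x+P$, so restriction to $x+P$ of any $f\in\mathcal{O}(x+sP)$ with $f(x)=1$ has no larger $L^p(e^{-\varphi})$-norm, giving $\mathfrak{B}(x,sP)\leqslant\mathfrak{B}(x,P)$.

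For the reverse inequality, the naive dilation $g_s(z):=f^\ast(x+(z-x)/s)$ of a near-extremal $f^\ast$ on $x+P$ does not work: upper semi-continuity of $\varphi$ yields only the wrong Fatou bound on $\int_{x+P}|f^\ast|^p e^{-\varphi(x+s(\cdot-x))}d\lambda$ as $s\to 1^+$. Instead I would build near-minimizers on $x+sP$ by a cutoff plus $\bar\partial$-correction. Fix $\varepsilon>0$, pick $f^\ast\in\mathcal{O}(x+P)$ with $f^\ast(x)=1$ and $\int_{x+P}|f^\ast|^p e^{-\varphi}d\lambda\leqslant\mathfrak{B}(x,P)^{-1}+\varepsilon$. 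By absolute continuity of the finite measure $|f^\ast|^p e^{-\varphi}d\lambda$, choose $\delta>0$ so that its mass on $(x+P)\setminus(x+(1-\delta)P)$ is below $\varepsilon$, and let $\chi$ be a smooth cutoff with $\chi\equiv 1$ on $x+(1-\delta)P$ and $\operatorname{supp}\chi\subset x+(1-\delta/2)P$. Then $\chi f^\ast$ extends by zero to a smooth section on $x+sP$ with $(\chi f^\ast)(x)=1$, and on the pseudoconvex cylinder $x+sP$ one can solve $\bar\partial u=\bar\partial(\chi f^\ast)$ via a H\"ormander--Ohsawa--Takegoshi $L^2$-estimate with psh weight $\varphi+2n\log|z-x|$ (plus an arbitrarily small strictly-psh perturbation): the singular factor $|z-x|^{-2n}$ forces $u(x)=0$, while the $L^2(e^{-\varphi})$-norm of $u$ is controlled by $\int_{\operatorname{supp}\bar\partial\chi}|f^\ast|^2 e^{-\varphi}d\lambda=O(\varepsilon)$ since $\operatorname{supp}\bar\partial\chi\subset(x+P)\setminus(x+(1-\delta)P)$. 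The candidate $g_s:=\chi f^\ast-u\in\mathcal{O}(x+sP)$ then satisfies $g_s(x)=1$ and $\int_{x+sP}|g_s|^p e^{-\varphi}d\lambda\leqslant\mathfrak{B}(x,P)^{-1}+O(\varepsilon)$ by the triangle inequality.

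The main obstacle is carrying out the $\bar\partial$-correction for $0<p<2$, where the H\"ormander estimate does not directly yield an $L^p$-bound on $u$. This is resolved by the iterative trick from the proof of Theorem~\ref{Thm:OptLp}: modify the psh weight by $(2-p)\log|\chi f^\ast|$, which is psh off the zero set of $\chi f^\ast$, solve at the $L^2$ level with this modified weight, and recover the $L^p$-bound on $u$ via H\"older's inequality exactly as in that proof. The H\"ormander constants remain bounded as $s\to 1^+$ because $\operatorname{supp}\bar\partial\chi$ is a fixed compact subset of $x+P$ independent of $s$. Letting first $s\to 1^+$ and then $\varepsilon\to 0$ in the estimate $\mathfrak{B}(x,sP)\geqslant 1/\int_{x+sP}|g_s|^p e^{-\varphi}d\lambda\geqslant 1/(\mathfrak{B}(x,P)^{-1}+O(\varepsilon))$ completes the proof.
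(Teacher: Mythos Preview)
Your reduction via Proposition~\ref{Prop:App2} to $\lim_{s\to1^+}\mathfrak{B}(x,sP)=\mathfrak{B}(x,P)$ is correct, as is the monotonicity direction. The gap is in the $\bar\partial$-correction argument for the reverse inequality.

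The claim that $\|u\|_{L^2(e^{-\varphi})}^2=O(\varepsilon)$ is unjustified: the H\"ormander right-hand side is $\int|\bar\partial\chi|^2|f^\ast|^2e^{-\varphi}|z-x|^{-2n}d\lambda$, not $\int_{\operatorname{supp}\bar\partial\chi}|f^\ast|^2e^{-\varphi}d\lambda$. You have dropped the factor $|\bar\partial\chi|^2$, which is of order $\delta^{-2}$ for a cutoff transitioning across a shell of width $\sim\delta$. Since the shell mass of $|f^\ast|^pe^{-\varphi}$ is generically only $O(\delta)$ (already for $\varphi\equiv0$, $P=\mathbb{D}$, $f^\ast\equiv1$ one gets $\sim 2\pi\delta$), the H\"ormander bound on $\|u\|^2$ is $\sim\delta^{-1}$ and blows up as $\delta\to0$. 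Keeping $\delta$ fixed instead leaves $\|u\|$ bounded but not small, so $g_s=\chi f^\ast-u$ fails to have $L^p$-norm close to $\|f^\ast\|_p$; either way the sharp constant is lost. Secondly, for $0<p<2$ the weight modification $(2-p)\log|\chi f^\ast|$ is not psh: $\chi f^\ast$ is not holomorphic, and vanishes identically on the open set $(x+sP)\setminus\operatorname{supp}\chi$, so the weight is $-\infty$ there and any $u$ with finite weighted $L^2$-norm must vanish on that open set, hence everywhere by analyticity---contradicting $\bar\partial u\not\equiv0$. Replacing $\chi f^\ast$ by $f^\ast$ does not help, since $f^\ast$ is only defined on $x+P$.

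The paper's proof avoids building competitors on $x+sP$ altogether. One sets $\psi(z)=\max\{\log(|z_1|/r),\log(|z'|/r')\}$ so that $\{\psi<t\}=e^tP$, forms the pseudoconvex family $\tilde\Omega=\{(\tau,z):\operatorname{Re}\tau<\log s_0,\ \psi(z)<\operatorname{Re}\tau\}$, and applies the optimal $L^p$ extension theorem together with the Guan--Zhou method to conclude that $\tau\mapsto B_{e^{\operatorname{Re}\tau}P,p}(0;e^{-\varphi})$ is log-psh. As this function depends only on $\operatorname{Re}\tau$, the map $t\mapsto\log\mathfrak{B}(0,e^tP)$ is convex on $(-\infty,\log s_0)$, hence continuous at $t=0$, which gives $\lim_{s\to1^+}\mathfrak{B}(0,sP)=\mathfrak{B}(0,P)$ directly.
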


\begin{proof}
Let $\mathfrak{B}(x,P):=B_{x+P,p}(x;e^{-\varphi})$. Having Proposition \ref{Prop:App2}, it is sufficient to show that $\lim_{s\to1^+}\mathfrak{B}(x,sP)=\mathfrak{B}(x,P), \forall x\in\Omega_P$.

For simplicity, we may assume that $x=0$, $P=\mathbb{D}_r\times\mathbb{B}_{r'}^{n-1}$ and $s_0P\Subset\Omega$ for some $s_0>1$. Let $\psi(z):=\max\{\log\frac{|z_1|}{r},\log\frac{|z'|}{r'}\}$, then $\psi$ is a psh function on $\mathbb{C}^n$ and $\{\psi<t\}=e^tP$ for any $t\in\mathbb{R}$. We consider a pseudoconvex domain
$$ \tilde{\Omega} := \{ (\tau,z)\in \mathbb{C}\times\mathbb{C}^n: \operatorname{Re}\tau<\log s_0, \psi(z)<\operatorname{Re}\tau \}. $$
Let $\pi:\tilde{\Omega}\to\mathbb{C}$ be the natural projection, then $\Omega_\tau:=\pi^{-1}(\tau) = e^{\operatorname{Re}\tau}P$ for all $\tau$. By the optimal $L^p$ extension theorem ($0<p\leqslant2$) and the Guan-Zhou method (see \cite{GuanZhou15} for details), one can shows that
$$ \tau\mapsto B_{\Omega_\tau,p}(0;e^{-\varphi}) $$
is a log-psh function on $\{\tau\in\mathbb{C}:\operatorname{Re}\tau<\log s_0\}$. Since $\Omega_\tau$ is independent of $\operatorname{Im}\tau$,
$$ \log B_{\Omega_t,p}(0;e^{-\varphi}) = \log \mathfrak{B}(0,e^tP) $$
is a convex function of $t\in(-\infty,\log s_0)$. In particular, $\lim_{s\to1}\mathfrak{B}(0,sP)=\mathfrak{B}(0,P)$. This completes the proof.
\end{proof}

Proposition \ref{Prop:App4} provides another solution to Inayama's conjecture.

\begin{theorem}
Let $\varphi$ be an upper semi-continuous function on $\Omega\subset\mathbb{C}^n$, then $\varphi$ is pluriharmonic if and only if $L_\varphi(x,P)\equiv1$ for all holomorphic cylinder $x+P\subset\Omega$.
\end{theorem}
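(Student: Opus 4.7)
The theorem we want to prove is that an upper semi-continuous $\varphi$ on $\Omega\subset\mathbb{C}^n$ is pluriharmonic if and only if $L_\varphi(x,P)\equiv 1$ on all holomorphic cylinders $x+P\subset\Omega$. The ``only if'' direction is essentially the content of the implication (i)$\Rightarrow$(ii) already observed in the proof of Theorem \ref{Thm:PluriHarm} (with $p=2$): if $\varphi$ is pluriharmonic, write $\varphi=2\operatorname{Re}u$ locally for a holomorphic $u$, and use $f=e^{u-u(x)}$ as an extremal competitor together with the mean-value inequality applied to $|g|^2 e^{-\varphi}$ for arbitrary $g\in\mathcal{O}(x+P)$. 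I would record this in one line and move on.

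For the substantive direction, the plan is to bootstrap the regularity of $\varphi$ using the appendix. Observe that in the notation of the appendix one has the identity
\begin{equation*}
L_\varphi(x,P)\;=\;\frac{e^{\varphi(x)}}{\textup{Vol}(P)\,B_{x+P,2}(x;e^{-\varphi})},
\end{equation*}
because the infimum defining $L_\varphi$ is precisely $1/B_{x+P,2}(x;e^{-\varphi})$ divided by $\textup{Vol}(P)e^{-\varphi(x)}$. Therefore the hypothesis $L_\varphi\equiv 1$ is equivalent to
\begin{equation*}
\varphi(x)\;=\;\log\!\bigl(\textup{Vol}(P)\,B_{x+P,2}(x;e^{-\varphi})\bigr)\quad\text{for all admissible }x+P\subset\Omega.
\end{equation*}
Since $L_\varphi\leqslant 1$ and $\varphi$ is upper semi-continuous, Theorem \ref{Thm:DNWZ}(1) tells us that $\varphi$ is psh. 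In particular, $\varphi$ is locally bounded from above and $\{\varphi=-\infty\}$ has zero measure, so Proposition \ref{Prop:App4} applies: for each fixed holomorphic cylinder $P$, the map $x\mapsto B_{x+P,2}(x;e^{-\varphi})$ is continuous on $\Omega_P$. Combining with the displayed identity, $\varphi$ is continuous on $\Omega_P$; varying $P$ shows $\varphi\in C^0(\Omega)$. At this point one simply invokes Theorem \ref{MainThm:Inayama} (Inayama's theorem for continuous weights) to conclude that $\varphi$ is pluriharmonic.

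The plan therefore consists of three beats: (a) translate the hypothesis $L_\varphi\equiv 1$ into an equation for $\varphi$ in terms of the weighted $2$-Bergman kernel; (b) upgrade upper semi-continuity to continuity via Proposition \ref{Prop:App4}, using the already-known fact that $\varphi$ is psh; (c) apply the continuous case of the conjecture. The hard part is conceptually just step (b); the whole argument hinges on Proposition \ref{Prop:App4}, which in turn rests on the log-plurisubharmonic variation of $B_{\Omega_\tau,p}$ (the Guan--Zhou/Berndtsson type statement obtained from the optimal $L^p$-extension theorem with $0<p\leqslant 2$). Once this continuity input is in hand, no further delicate analysis is needed and the theorem follows without appealing to the more involved machinery developed for Theorem \ref{MainThm2}. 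I would also remark that this route gives an alternative, kernel-theoretic proof of (the upper semi-continuous version of) Inayama's conjecture, complementary to the semi-continuity arguments used earlier in Section \ref{Sec:PH}.
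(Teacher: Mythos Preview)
Your proposal is correct and follows essentially the same route as the paper's own proof: deduce plurisubharmonicity of $\varphi$ from $L_\varphi\leqslant1$ via the optimal $L^2$-extension condition, rewrite $L_\varphi\equiv1$ as $e^{\varphi(x)}=\textup{Vol}(P)\,B_{x+P,2}(x;e^{-\varphi})$, invoke Proposition~\ref{Prop:App4} to obtain continuity of $\varphi$, and then apply Theorem~\ref{MainThm:Inayama}. The only cosmetic difference is that you spell out the necessity direction and the Bergman-kernel identity in more detail than the paper does.
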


\begin{proof}
We only need to prove the sufficiency.
Since $\varphi$ is upper semi-continuous and $L_\varphi\leqslant1$, we see that $\varphi$ satisfies the optimal $L^2$-extension condition. Consequently, $\varphi$ is a psh function. Let $P$ be a sufficiently small holomorphic cylinder in $\mathbb{C}^n$, then $L_\varphi\equiv1$ implies
$$  e^{\varphi(x)} = \textup{Vol}(P)\times B_{x+P,2}(x;e^{-\varphi}), \quad \forall x\in\Omega_P. $$
Then it follows from Proposition \ref{Prop:App4} that $\varphi$ is a continuous function on $\Omega_P$. Having Theorem \ref{MainThm:Inayama}, this completes the proof.
\end{proof}

\noindent\textbf{Acknowledgements.} The authors would like to thank Prof. Fusheng Deng, Prof. Zhiwei Wang, Prof. Xiangyu Zhou and Dr. Hui Yang for useful discussions. The second author also wants to thank Dr. Takahiro Inayama for sharing the conjecture \cite[Conjecture A.2]{Inayama} and his progress via email.


\begin{thebibliography}{99}
\bibitem{BP08} B. Berndtsson, M. P\u{a}un: Bergman kernels and the pseudoeffectivity of relative canonical bundles. Duke Math. J. 145: 341-378 (2008).

\bibitem{Blocki13} Z. B{\l}ocki: Suita conjecture and the Ohsawa-Takegoshi extension theorem. Invent. Math. 193(1): 149-158 (2013).

\bibitem{ChenZhang22} B. Chen, L. Zhang: On the $p$-Bergman theory. Adv. Math. 405: no. 108516 (2022).

\bibitem{DNW21} F. Deng, J. Ning, Z. Wang: Characterizations of plurisubharmonic functions. Sci. China. Math. 64(9): 1959-1970 (2021).

\bibitem{DNWZ22} F. Deng, J. Ning, Z. Wang, X. Zhou: Positivity of holomorphic vector bundles in terms of $L^p$-estimates for $\overline{\partial}$. Math. Ann. 385: 575-607 (2023).

\bibitem{DWZZ18} F. Deng, Z. Wang, L. Zhang, X. Zhou: New characterizations of plurisubharmonic functions and positivity of direct image sheaves. Amer. J. Math. 146(3): 751-768 (2024).

\bibitem{DengZhang21} F. Deng, X. Zhang: Characterizations of curvature positivity of Riemannian vector bundles and convexity or pseudoconvexity of bounded domains in $\mathbb{R}^n$ or $\mathbb{C}^n$ in terms of $L^2$-estimate of $d$ or $\overline{\partial}$ equation. J. Funct. Anal. 281(9): no. 109184 (2021).

\bibitem{Forstneric} F. Forstneri\v{c}: Stein manifolds and holomorphic mappings, The homotopy principle in complex analysis, Second edition. Springer (2017).

\bibitem{GuanZhou12} Q. Guan, X. Zhou: Optimal constant problem in the $L^2$ extension theorem. C. R. Math. Acad. Sci. Paris 350: 753-756 (2012). 

\bibitem{GuanZhou15} Q. Guan, X. Zhou: A solution of an $L^2$ extension problem with an optimal estimate and applications. Ann. of Math. 181(3): 1139-1208 (2015).

\bibitem{GZZ11} Q. Guan, X. Zhou, L. Zhu: On the Ohsawa-Takegoshi $L^2$ extension theorem and the twisted Bochner-Kodaira identity. C. R. Math. Acad. Sci. Paris 349: 797-800 (2011). 

\bibitem{Guan19} Q. Guan: A sharp effectiveness result of Demailly's strong openness conjecture. Adv. Math. 348: 51-80 (2019).

\bibitem{GuanMi22} Q. Guan, Z. Mi: Concavity of minimal $L^2$ integrals related to multiplier ideal sheaves. Peking Math. J. 6: 393-457 (2023).

\bibitem{Hor65} L. H\"ormander: $L^2$ estimates and existence theorems for the $\overline{\partial}$ operator. Acta Math. 113: 89-152 (1965).

\bibitem{Hosono19} G. Hosono: On sharper estimates of Ohsawa-Takegoshi $L^2$-extension theorem. J. Math. Soc. Japan 71(3): 909-914 (2019).

\bibitem{HosonoInayama} G. Hosono, T. Inayama: A converse of H\"ormander's $L^2$-estimate and new positivity notions for vector bundles. Sci. China Math. 64(8): 1745-1756 (2021).

\bibitem{Ina-JGA} T. Inayama: Curvature currents and Chern forms of singular Hermitian metrics on holomorphic vector bundles. J. Geom. Anal. 30(1): 910-935 (2020).

\bibitem{Ina-AG} T. Inayama: Nakano positivity of singular Hermitian metrics and vanishing theorems of Demailly-Nadel-Nakano type. Algebr. Geom. 9(1): 69-92 (2022).

\bibitem{Inayama} T. Inayama: $L^2$-extension indices, sharper estimates and curvature positivity, arXiv:2210.08456 (2022).

\bibitem{InayamaNote} T. Inayama: A note on characterizing pluriharmonic functions via the Ohsawa-Takegoshi extension theorem, arXiv:2307.02048. J. Math. Sci. Univ. Tokyo 30(3): 365-369 (2023).

\bibitem{KharePingali} A. Khare, V. Pingali: On an asymptotic characterisation of Griffiths semipositivity. Bull. des Sci. Math. 167: no. 102956 (2021).

\bibitem{Kikuchi} S. Kikuchi: On sharper estimates of Ohsawa-Takegoshi $L^2$-extension theorem in higher dimensional case. Manuscripta Math. 170: 453-469 (2022).

\bibitem{KobVectBun} S. Kobayashi: Differential geometry of complex vector bundles. Princeton University Press (1987).

\bibitem{LiuYangZhou21} Z. Liu, H. Yang, X. Zhou: New properties of multiplier submodule sheaves. Comptes Rendus Math.
360: 1205-1212 (2022).

\bibitem{OT87} T. Ohsawa, K. Takegoshi: On the extension of $L^2$ holomorphic functions. Math. Z. 195(2): 197-204
(1987).

\bibitem{PT18} M. P\u{a}un, S. Takayama: Positivity of twisted relative pluricanonical bundles and their direct images. J. Algebraic Geom. 27: 211-272 (2018).

\bibitem{Raufi} H. Raufi: Singular hermitian metrics on holomorphic vector bundles. Ark. Mat. 53(2): 359-382 (2015).

\bibitem{Watanabe} Y. Watanabe: Curvature operator of holomorphic vector bundles and $L^2$-estimate condition for $(n,q)$ and $(p,n)$-forms, arXiv:2109.12554 (2021).

\bibitem{XuZhou22} W. Xu, X. Zhou: Optimal $L^2$ extensions of openness type, arXiv:2202.04791. Math. Ann., published online (2024). https://doi.org/10.1007/s00208-023-02774-9
\end{thebibliography}
\end{document}